\def \tp {{\rm tp}}
\def \qftp {{\rm qftp}}
\def \ind {\mathop{\raisebox{-0.5ex}{$\mathop{\smile \hskip -0.9em ^| \ }$}}\limits}
\def \non-ort {{\not\!\!\bot}}
\def \Th {{\rm Th}}
\def \Q {{\mathbb Q}}
\def \M {{\mathbb{M}}}
\def \s#1 {{\rm S}#1 }
\def \max{{\rm max}}
\def \sup{{\rm sup}}
\def \size{{\rm size}}
\def \calC{{\cal C}}
\def \P{{\cal P}}
\def \IP{{\rm IP}}
\def \NIP{{\rm NIP}}
\def \VC{{\rm VC}}
\def \op{{\rm op}}
\def \opg{{\rm opg}}
\def\proclaim #1. #2\par{\medbreak
  \noindent{\bf#1 \enspace}{\sl#2}\par
  \ifdim\lastskip<\medskipamount \removelastskip\penalty55\medskip\fi}
\newtheorem{theorem}{Theorem}[section]
\newtheorem{definition}[theorem]{Definition}
\newtheorem{proposition}[theorem]{Proposition}
\newtheorem{lemma}[theorem]{Lemma}
\newtheorem{corollary}[theorem]{Corollary}
\newtheorem{remark}[theorem]{Remark}
\newtheorem{example}[theorem]{Example}
\newtheorem{fact}[theorem]{Fact}
\newtheorem{claim}{Claim}[theorem]
\newtheorem*{claim*}{Claim}
\title{On $n$-dependence}
\author{Artem Chernikov \thanks{Partially supported by the Fondation Sciences Math\'{e}matiques de Paris  (FSMP), by a public grant overseen by the French National Research Agency (ANR) as part of the ``Investissements d'Avenir'' program (reference: ANR-10-LABX-0098) and by ValCoMo (ANR-13-BS01-0006).} 
\and Daniel Palacin \thanks{Partially supported by the project MODIG (ANR-09-BLAN-0047) of the French National Research Agency, the project SFB 878, and the project MTM 2011-26840.} 
\and Kota Takeuchi \thanks{Partially supported by JSPS KAKENHI Grant Number 26800077}}
\date{}
\begin{document}

\maketitle
\begin{abstract}
In this note we develop and clarify some of the basic combinatorial properties of the new notion of $n$-dependence (for $1\leq n < \omega$) recently introduced by Shelah \cite{Shelah:vn}. In the same way as dependence of a theory means its inability to encode a bipartite random graph with a definable edge relation, $n$-dependence corresponds to the inability to encode a random $(n+1)$-partite $(n+1)$-hypergraph with a definable edge relation. We characterize $n$-dependence by counting $\varphi$-types over finite sets (generalizing Sauer-Shelah lemma and answering a question of Shelah from \cite{Shelah:kx}) and in terms of the collapse of random ordered $(n+1)$-hypergraph indiscernibles down to order-indiscernibles (which implies that the failure of $n$-dependence is always witnessed by a formula in a single free variable).
\end{abstract}
\section{Introduction}

Shelah had introduced the notion of a \emph{dependent theory} (also called NIP) in his work on the classification program for first-order theories \cite{ShelahClassification}. Since then dependent theories had attracted a lot of attention due to the purely model theoretic work on generalizations of stability and o-minimality (e.g. \cite{HruPill, Sh950, ExtDef2}), the analysis of some important algebraic examples (e.g. \cite{HHM}) and  connections to combinatorics (e.g. \cite{VCdensity_50people}).

More recently, in \cite{Shelah:kx, Shelah:vn} Shelah had introduced a generalization of dependence called $n$-dependence, where $1\leq n < \omega$. The change is that instead of forbidding an encoding of a random bipartite graph with a definable edge relation, one forbids an encoding of a random $(n+1)$-partite $(n+1)$-hypergraph with a definable edge relation (see Definition \ref{def: n-dependence}). Then dependence corresponds to $1$-dependence, and we have an increasing family of classes of theories.

So far, not much is known about $n$-dependent theories. In  \cite{Shelah:vn} Shelah demonstrates some results about connected components for (type)-definable groups in 2-dependent theories (which can be viewed as a form of modularity in certain context, see remarks in \cite[Section 6.5]{hrushovski2013}). In \cite{Hempel} Hempel shows a finitary version of this result giving a certain ``chain condition'' for groups definable in $n$-dependent theories and demonstrating that every $n$-dependent field is Artin-Schreier closed. Some further questions and statements are mentioned in \cite[Section 5(H)]{Shelah:kx}. The aim of this note is essentially to clarify that material and to answer some questions posed there. Here is the outline of the paper. 

In Section \ref{sec: n-dependence} we define $n$-dependence of a formula and give some motivating examples of $n$-dependent theories.

In Section \ref{sec: counting types} we introduce a generalization of VC-dimension capturing $n$-dependence and give a corresponding generalization of Sauer-Shelah lemma using bounds on the so-called Zarankiewicz numbers for hypergraphs from combinatorics. As an application we characterize $n$-dependent theories by counting $\varphi$-types over finite sets and give a counterexample to a more optimistic bound asked by Shelah. A preliminary version of the upper bound result has appeared in \cite{Kota}. The optimality of our result remains open (and is closely connected to the open problem of lower bounds for Zarankiewicz numbers).

In Section \ref{sec: Gen Indisc} we discuss existence of various generalized indiscernibles useful for the study of $n$-dependence and connections to some results from structural Ramsey theory. In Section \ref{sec: IPn and gen indisc} we apply these observations to show that a theory is $n$-dependent if and only if every ordered random $(n+1)$-hypergraph indiscernible is actually just order-indiscernible. The case $n=1$ is due to Scow \cite{Scow:2011uq}. 

Another application of hypergraph indiscernibles is given in Section \ref{sec: reduction to 1 variable} where we demonstrate that a theory is $n$-dependent if and only if every formula in a \emph{single} free variable is $n$-dependent. This is a result due to Shelah \cite[Claim 2.6]{Shelah:vn}, however the authors found the proof suggested there to be lacking in some details and we use this opportunity to provide a detailed account of Shelah's theorem.

Finally, in the Appendix we verify a claim from Section \ref{sec: Gen Indisc} that the class of ordered partite hypergraphs forms a Ramsey class. This might be folklore, but we feel that a readable account could be beneficial.

\section{$n$-dependence} \label{sec: n-dependence}

The following property was introduced in \cite[Section 5(H)]{Shelah:kx} and \cite[Definition 2.4]{Shelah:vn}.

\begin{definition}\rm \label{def: n-dependence}
A formula $\varphi\left(x;y_{0},\ldots,y_{n-1}\right)$ has the \emph{$n$-independence property}, or $\IP_{n}$ (with respect to a theory $T$), if in some model there is a sequence $\left(a_{0,i},\ldots,a_{n-1,i}\right)_{i\in\omega}$
such that for every $s\subseteq\omega^{n}$ there is $b_{s}$ such
that 
\[
\models\phi\left(b_{s};a_{0,i_{0}},\ldots,a_{n-1,i_{n-1}}\right)\Leftrightarrow\left(i_{0},\ldots,i_{n-1}\right)\in s\mbox{.}
\]
Here $x,y_0, \ldots, y_{n-1}$ are possibly tuples of variables.
Otherwise we say that $\varphi\left(x,y_{0},\ldots,y_{n-1}\right)$ is $n$-dependent, or 
$\NIP_{n}$.
A theory is \emph{$n$-dependent}, or $\NIP_n$, if it implies that every formula is
$n$-dependent.
\end{definition}

We give some motivating examples and remarks.
\begin{example}\label{ex: basic n-dependent} \rm
\begin{enumerate}
\item If $T$ is $n$-dependent then it is $\left(n+1\right)$-dependent. Of course, $T$ is dependent if and only if it is $1$-dependent.
\item The theory of a random $n$-hypergraph is $\left(n+1\right)$-dependent,
but not $n$-dependent. Here $(n+1)$-dependence is immediate by quantifier elimination and Proposition \ref{prop: criterion for n-dependence}, and $n$-independence is witnessed by the edge relation. The same holds for random $n$-partite $n$-hypergraphs and for random $K_{m}$-free $n$-hypergraph.

\item Similarly, it follows by the type-counting criterion from Proposition \ref{prop: criterion for n-dependence} that in fact any theory with elimination of quantifiers in which any atomic formula has at most $n$ variables is $n$-dependent. In particular, any theory eliminating quantifiers in a finite relational language is $n$-dependent, where $n$ is the maximum of the arities of the relations in the language.

\item A theory $T$ is called quasifinite if there is a function $\nu : \omega \to \omega$ such that every finite subset $T_0$ of $T$ has a finite model in which the number of $k$-types is bounded by $\nu(k)$. In particular, every quasifinite theory is pseudofinite and $\aleph_0$-categorical. Quasifinite theories are studied in depth in \cite{CherlinHrushovski}, and in \cite[Section 6.5]{hrushovski2013} it is pointed out that every quasi-finite theory is $2$-dependent: it is demonstrated in \cite{CherlinHrushovski} using the classification of finite simple groups that in a quasifinite theory, $\pi_{\Delta}(m)$ grows at most as $2^m$ (see Definition \ref{def: pi_phi} and Proposition \ref{prop: criterion for n-dependence}). An example of a quasifinite theory is the theory of a generic bilinear form on an infinite-dimensional vector space over a finite field (a direct proof that this theory is $2$-dependent is given in \cite{Hempel}).

\item On the other hand, any theory of an infinite boolean algebra is $n$-independent, for all $n$ (see \cite[Example 2.10]{Shelah:vn}).

\item By a result of Beyarslan \cite{beyarslan2010random}, any pseudo-finite field interprets random $n$-hypergraph, for all $n$ --- so it is not $n$-dependent for any $n$. More generally, \cite{Hempel} shows that any PAC field which is not separably closed is $n$-independent, for all $n$.  In view of this (and the well-known conjecture that all supersimple fields are PAC), one could ask if in fact every (super)simple $n$-dependent field is separably closed.
\end{enumerate}
\end{example}

\section{Counting $\varphi$-types and a generalization of Sauer-Shelah lemma} \label{sec: counting types}

\subsection{Sauer-Shelah Lemma and Generalized VC-dimension}
The maximum number of $\varphi(x,y)$-types over finite sets coincides with the value of the shatter function in the theory of VC-dimension in combinatorics (see e.g. \cite{VCdensity_50people} for a detailed account of this correspondence).
We generalize the notion of VC-dimension and investigate the upper bound of the generalized shatter function.
In this subsection, we discuss purely combinatorial topics. 
The connection with counting $\varphi$-types and $n$-dependence will be discussed in the next subsection
(see Lemma \ref{coincide1} and Lemma \ref{coincide2}).

First we recall classical VC-dimension and Sauer-Shelah lemma.
Let $X$ be a set and $\calC\subseteq \P(X)$ a class of subsets of $X$. Given a subset $A\subseteq X$ we write $\calC \cap A$ to denote the set $\{C\cap A: C\in\calC\}$.
\begin{definition}[Vapnik and Chervonenkis]\rm
A subset $A\subseteq X$ is said to be \emph{shattered} by $\calC$ if $\calC \cap A =\P(A)$. The VC-dimension of $\calC$ is defined as 
$$\VC(\calC)=\sup\{|A|:A\subseteq X \mbox{ is shattered by } \calC\},$$ and the shatter function of $\calC$ is defined as $$\pi_\calC(m):=\max\{|\calC \cap A|: A\subseteq X, |A|=m\}.$$ Observe that $0\leq \pi_\calC(m)\leq 2^m$, and $\pi_\calC(m)=2^m$ if and only if $m \leq \VC(\calC)$.
\end{definition}


\begin{fact}\rm
\begin{enumerate}
\item
(Sauer-Shelah lemma) Assume that $\VC(\calC) \leq d$. Then $\pi_\calC(m)\leq \sum_{i\leq d}\binom{m}{i}$
 for all $m \geq d$.
In particular, $\pi_\calC(m)\leq \left(\frac{em}{d} \right)^d = O(m^d)$ for all $m$.
\item
There is a class $\calC\subseteq \P(X)$ with $\VC(\calC)=d$ such that $\pi_\calC(m)=\sum_{i\leq d}\binom{m}{i}$ for $m\geq d$ (e.g. the class of all subsets of $X$ of size $\leq d$). Hence, the bound given by Sauer-Shelah lemma is tight.
\end{enumerate}
\end{fact}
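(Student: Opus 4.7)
The plan is to treat the two parts separately. Part (1) is the classical Sauer--Shelah bound, which I would establish by the well-known shattering induction due to Vapnik--Chervonenkis, Sauer and Shelah, and then extract the polynomial estimate $(em/d)^d$ from a standard binomial inequality. Part (2) follows from an explicit choice of witnessing class.

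For the main inequality, fix $A \subseteq X$ with $|A| = m > d$ and pick a distinguished element $a \in A$. Partition the trace $\calC|_A := \{C \cap A : C \in \calC\}$ using $a$: set
\begin{align*}
\calF_1 &:= \calC|_{A \setminus \{a\}} = \{C \cap (A \setminus \{a\}) : C \in \calC\},\\
\calF_2 &:= \{C \subseteq A \setminus \{a\} : \text{both } C \text{ and } C \cup \{a\} \text{ belong to } \calC|_A\}.
\end{align*}
A straightforward double-counting gives $|\calC|_A| = |\calF_1| + |\calF_2|$. Both families live on the smaller ground set $A \setminus \{a\}$, and two observations drive the induction: first, any subset of $A \setminus \{a\}$ shattered by $\calF_1$ is also shattered by $\calC$, so $\VC(\calF_1) \leq d$; second, if $B \subseteq A \setminus \{a\}$ is shattered by $\calF_2$, then $B \cup \{a\}$ is shattered by $\calC$, hence $\VC(\calF_2) \leq d - 1$. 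Induction on $m$ (with trivial base $m = d$, where $2^d = \sum_{i \leq d}\binom{d}{i}$), together with Pascal's identity
\begin{equation*}
\sum_{i \leq d} \binom{m-1}{i} + \sum_{i \leq d-1} \binom{m-1}{i} = \sum_{i \leq d} \binom{m}{i},
\end{equation*}
then gives the claimed bound.

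For the polynomial estimate, when $m \geq d$ one has
\begin{equation*}
\sum_{i \leq d} \binom{m}{i} \leq (m/d)^d \sum_{i = 0}^{m} \binom{m}{i}(d/m)^i = (m/d)^d (1 + d/m)^m \leq (em/d)^d,
\end{equation*}
using $1 + x \leq e^x$. For (2), take $\calC := \{B \subseteq X : |B| \leq d\}$: any $d$-element $A \subseteq X$ is shattered because every subset of $A$ lies in $\calC$, while no $(d+1)$-element set can be shattered since such a set is not itself a member of $\calC$; moreover, for $|A| = m \geq d$ one has $\calC \cap A = \{B \subseteq A : |B| \leq d\}$, of cardinality exactly $\sum_{i \leq d}\binom{m}{i}$. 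The only genuinely conceptual step is finding the correct partition of $\calC|_A$ so that the inductive hypothesis combines cleanly with Pascal's identity; the remaining work is routine bookkeeping and elementary estimates.
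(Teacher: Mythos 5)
Your proof is correct. The paper itself states this as a Fact without proof, but the argument it actually runs --- for the generalization in Proposition \ref{generalized Sauer}, which it notes recovers the classical case since $z_1(m,d+1)=d+1$ --- goes through the shifting technique (Fact \ref{shifting}): one replaces $\calC \cap A$ by an equinumerous downward-closed family $\calC'$ every member of which is shattered by $\calC$, so every member has size at most $d$, and the bound $\sum_{i\leq d}\binom{m}{i}$ falls out by counting subsets of $A$ of size at most $d$. You instead give the original Vapnik--Chervonenkis/Sauer induction: the partition $|\calC|_A| = |\calF_1| + |\calF_2|$ with $\VC(\calF_1)\leq d$, $\VC(\calF_2)\leq d-1$, and Pascal's identity. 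Both are standard and your version is complete and correct (the double count, the two VC-dimension estimates, the base case, the binomial-tail estimate giving $(em/d)^d$, and the witnessing class in part (2) are all right). The trade-off is worth noting: your induction is more self-contained, needing no external lemma, but it deletes a single point of $A$ and so has no evident analogue once $A$ is required to be a box in $\prod_{i<n} X_i$; the shifting argument is precisely the one that survives the passage to $\VC_n$-dimension, which is why the paper is organized around it. One cosmetic point: you correctly state the estimate $\sum_{i\leq d}\binom{m}{i}\leq (em/d)^d$ only for $m\geq d$, whereas the Fact as printed asserts it ``for all $m$''; for $m<d$ one should fall back on the trivial bound $2^m$, so your restriction is the right one.
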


Throughout this subsection, we fix (infinite) sets $X_0,\ldots ,X_{n-1}$ and $X={\prod}_{i<n}X_i$. 
For a class $\calC\subseteq \P(X)$ we define a notion of $\VC_n$-dimension of $\calC$.

\begin{definition}\rm
A subset $A\subseteq X$ is said to be a box of $\size(A)=m$ if
$A={\prod}_{i<n} A_i$ for some $A_i\subseteq X_i$ ($i<n$) with each $|A_i|=m$.
The $\VC_{n}$-dimension of $\calC$ is defined as 
$$\VC_n(\calC)= \sup\{ \size(A):A\subseteq X \mbox{ is a box shattered by } \calC\},$$ and the corresponding shatter function by $$\pi_{\calC,n}(m):=\max\{|\calC \cap A|: \mbox{$A\subseteq X$ is a box of size $m$}\}.$$
\end{definition}

\begin{remark}\rm
\begin{enumerate}
\item
$0\leq \pi_{\calC,n}(m)\leq 2^{m^n}$.
\item
$\pi_{\calC,n}(m)=2^{m^n}$ if and only if $m \leq \VC_n(\calC)$.
\end{enumerate}
\end{remark}

We generalize Sauer-Shelah lemma below. First we introduce some notation from extremal graph theory.
\begin{definition}\rm
Let $G^{(n)}(m_0,\ldots ,m_{n-1})$ denote an $n$-partite $n$-uniform hypergraph such that the $i$-th part has $m_i$ vertices. If $m_0=\ldots =m_{n-1}=m$, we simply write $G^{(n)}(m)$. Moreover, let $K^{(n)}(m)$ be the complete $n$-partite $n$-uniform hypergraph $G^{(n)}(m)$. (For example, $K^{(2)}(3)$ is the bipartite complete graph $K_{3,3}$.) Then:
\begin{itemize}
\item
The value ${\rm ex}_n(m, K^{(n)}(d))$ is the minimum natural number $k$ satisfying the following:
for every (not partite) $n$-uniform hypergraph $G$ with $m$-vertices, if $G$ has $\geq k$ edges then $G$ contains $K^{(n)}(d)$ as a subgraph.
\item
The Zarankiewicz number $z_n(m,d)$ is the minimum natural number $z$ satisfying the following:
every $G^{(n)}(m)$ having $\geq z$ edges contains $K^{(n)}(d)$ as a subgraph.
\end{itemize}
\end{definition}

\begin{fact}\rm\cite{Erdos}\label{Erdos}
For given $n$ and $d$, let $\varepsilon=\dfrac{1}{d^{n-1}}$. Then there is $k\in\omega$ such that for every $m>k$ we have:
\begin{enumerate}
\item
${\rm ex}_n(m, K^{(n)}(d))\leq m^{n-\varepsilon}$,
\item
in particular $z_n(m,d)\leq (nm)^{n-\varepsilon}$.
\end{enumerate}
\end{fact}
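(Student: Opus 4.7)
The plan is to prove part (1) by induction on $n$, using the K\H{o}v\'ari--S\'os--Tur\'an theorem as the base case $n=2$, and to deduce part (2) as an immediate corollary.

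For the base case $n = 2$ the exponent becomes $\varepsilon = 1/d$, so the claim ${\rm ex}_2(m, K^{(2)}(d)) \leq m^{2 - 1/d}$ is exactly K\H{o}v\'ari--S\'os--Tur\'an. Its standard proof counts pairs $(v, S)$ where $v$ is a vertex of a $K^{(2)}(d)$-free graph $G$ and $S \subseteq N_G(v)$ has size $d$: on one hand $K^{(2)}(d)$-freeness forces every fixed $d$-subset of vertices to lie in at most $d-1$ neighborhoods, giving the upper bound $(d-1)\binom{m}{d}$; on the other hand convexity of $x \mapsto \binom{x}{d}$ produces the lower bound $m\binom{2e/m}{d}$, where $e = |E(G)|$. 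Combining these forces $e \leq c\cdot m^{2 - 1/d}$ for a constant $c = c(d)$, with the constant absorbed into the choice of $k$.

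For the inductive step, assume the statement for $(n-1)$-uniform hypergraphs. Given an $n$-uniform hypergraph $G$ on $m$ vertices with $e$ edges, for each vertex $v$ let $L_v$ be its link, an $(n-1)$-uniform hypergraph with $|L_v| = \deg_G(v)$ and $\sum_v |L_v| = n\cdot e$. Now count the pairs $(v, H)$ where $H$ is a copy of $K^{(n-1)}(d)$ contained in $L_v$. If $G$ is $K^{(n)}(d)$-free, then any fixed such configuration on $d(n-1)$ vertices is contained in at most $d-1$ links (otherwise the $d$ witnessing vertices $v$ together with the configuration would form a forbidden $K^{(n)}(d)$), yielding the upper bound $(d-1)\binom{m}{d}^{n-1}$ on the count. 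On the other hand, a supersaturation form of the inductive hypothesis produces a polynomial lower bound on the number of $K^{(n-1)}(d)$-copies in each $L_v$ once $|L_v|$ exceeds $m^{(n-1) - 1/d^{n-2}}$, and summing over $v$ and applying Jensen's inequality converts this into a lower bound in terms of $e$. Comparing the two bounds forces $e \leq m^{n - 1/d^{n-1}}$ for $m$ larger than some threshold $k = k(n,d)$, completing the induction. Part (2) is then an immediate consequence: a hypergraph of type $G^{(n)}(m)$ with at least $z$ edges, viewed as a general $n$-uniform hypergraph on $nm$ vertices, still has $z$ edges, so part (1) applied on $nm$ vertices delivers a copy of $K^{(n)}(d)$ whenever $z > (nm)^{n-\varepsilon}$; such a copy is automatically partite-correct since the ambient hypergraph was.

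The main technical obstacle is the supersaturation step inside the inductive argument: the inductive hypothesis as stated supplies only a single copy of $K^{(n-1)}(d)$ once the edge threshold is crossed, whereas the double count needs a quantitative lower bound on the number of copies matching the polynomial upper bound. The standard remedy is to strengthen the induction to a ``counting'' form of the statement (bounding below the number of $K^{(n-1)}(d)$-subgraphs as a function of the edge density), which can be proved by the same argument with multiplicative constants tracked carefully, and which only affects the hidden threshold $k$.
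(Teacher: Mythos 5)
The first thing to say is that the paper does not prove this statement at all: it is imported as a black box, cited to Erd\H{o}s's 1964 paper on extremal problems for graphs and generalized graphs, so there is no in-paper argument to measure yours against. Judged on its own terms, your outline has one genuine gap, and it sits exactly where the work is: the entire content of your inductive step is delegated to a ``supersaturation form of the inductive hypothesis'' --- a lower bound of roughly $N^{d(n-1)}\left(E/N^{n-1}\right)^{d^{n-1}}$ on the number of $K^{(n-1)}(d)$-copies in an $(n-1)$-uniform hypergraph with $N$ vertices and $E$ edges --- which you assert but do not prove. The plain existence statement you are inducting on does not yield it (one copy per dense link is far too few to beat the upper bound $(d-1)\binom{m}{d}^{n-1}$), so as written the induction does not close. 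The counting lemma is true and provable by iterated convexity, but it is not a one-line patch: you must restate the theorem in counting form at every level of the induction and check that the constants survive being raised to the power $d^{n-1}$; this is also where the threshold $k(n,d)$ and the absence of any multiplicative constant in the bound $m^{n-\varepsilon}$ have to be earned.

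You should also know that Erd\H{o}s's own induction avoids supersaturation entirely and needs only the existence form of the inductive hypothesis. Instead of the links of single vertices, consider for each $d$-element vertex set $L$ the \emph{common} link $G_L$: the $(n-1)$-uniform hypergraph of all $(n-1)$-sets $S$ such that $S\cup\{x\}$ is an edge for every $x\in L$. Double counting gives $\sum_L |G_L| = \sum_S \binom{\deg S}{d}$, where $S$ ranges over $(n-1)$-sets and $\deg S$ is the number of vertices completing $S$ to an edge; convexity together with $e\geq m^{n-\varepsilon}$ forces some $G_L$ to have on the order of $m^{(n-1)-1/d^{n-2}}$ edges (the average of $|G_L|$ is about $m^{n-1}\cdot m^{d(1-\varepsilon)}/m^{d}=m^{(n-1)-d\varepsilon}$, and $d\varepsilon = 1/d^{n-2}$), so the inductive hypothesis directly hands you a $K^{(n-1)}(d)$ inside $G_L$, which together with $L$ is the desired $K^{(n)}(d)$. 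Your base case is fine, and your deduction of (2) from (1) --- viewing $G^{(n)}(m)$ as a general hypergraph on $nm$ vertices and observing that any $K^{(n)}(d)$-copy found there is automatically compatible with the partition because every edge meets each part exactly once --- is correct.
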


It is known that the bound given above is tight for $n=2$ and $d = 2,3$ (see e.g. \cite{pach2011combinatorial}), but the question about lower bounds is widely open even for graphs in general (the best lower bound for $n=2$ and $d \geq 5$ is $\Omega(n^{2-2/d} \log(d)^{1/(d^2-1)})$ \cite{bohman2010early}).
For our purposes we will only need the following:
\begin{fact}\label{lower bound}\rm \cite[Chapter 5.2, Corollary 2.7]{Bollobas}
There is $k\in \omega$ such that $z_2(m,2)>m^{3/2}(1-\dfrac 1{m^{1/6}})$ for every $m>k$. 

In particular, we can find $c>0$ such that $z_2(m,2)\geq cm^{3/2}$ for every $m\in\omega$. 
\end{fact}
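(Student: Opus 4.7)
The plan is to exhibit explicit $K_{2,2}$-free bipartite graphs with many edges using finite projective planes, and then extend the lower bound to arbitrary $m$ via a prime gap estimate. For each prime power $q$, form the point--line incidence bipartite graph of the projective plane $PG(2,q)$: one side consists of the $n_q := q^2+q+1$ points, the other of the $n_q$ lines, and a point is joined to a line iff it lies on it. Each line contains $q+1$ points, so the graph has $n_q(q+1)$ edges. Since any two distinct lines meet in exactly one point, no four vertices can span a $K_{2,2}$, and consequently $z_2(n_q,2) > n_q(q+1) = (q^2+q+1)(q+1)$, which is of order $n_q^{3/2}$.

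To pass from $m$ of the special form $n_q$ to arbitrary $m$, invoke a prime gap estimate guaranteeing that for all sufficiently large $M$ there is a prime $q \in [M - M^{1-\delta}, M]$ for some fixed $\delta > 0$ (any Ingham- or Huxley-style bound on primes in short intervals suffices). Applied to $M = \lfloor \sqrt{m} \rfloor$ this produces a prime $q$ with $n_q \leq m$ and $\sqrt{m} - q$ small. Padding the incidence graph with $m - n_q$ isolated vertices on both sides yields a $K_{2,2}$-free $G^{(2)}(m)$ with $n_q(q+1)$ edges, and an elementary manipulation then gives $n_q(q+1) \geq m^{3/2}(1 - 1/m^{1/6})$ for all sufficiently large $m$, as required. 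The ``in particular'' clause follows by choosing $c$ small enough to absorb the finitely many small values of $m$.

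The main obstacle is securing the right prime gap input: Bertrand's postulate alone yields only a constant-factor lower bound and not the sharp $1 - o(1)$ factor in the statement, so the argument depends on a genuine analytic number theory estimate on primes in short intervals. Once $q$ has been chosen close enough to $\sqrt{m}$, the remaining work is routine: verifying that two distinct lines in $PG(2,q)$ share a unique point (so the incidence graph is $K_{2,2}$-free), and expanding $(q^2+q+1)(q+1)$ against the target $m^{3/2}(1 - 1/m^{1/6})$.
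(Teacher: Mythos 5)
The paper does not prove this statement at all: it is quoted as a known Fact with a citation to Bollob\'as's \emph{Extremal Graph Theory}, so there is no internal proof to compare against. Your argument --- the point--line incidence bipartite graph of $PG(2,q)$, which is $K_{2,2}$-free because two distinct lines meet in a unique point, combined with a short-interval prime theorem to handle general $m$ --- is precisely the classical Reiman/Erd\H{o}s--R\'enyi--S\'os construction that underlies the cited result, and it is correct in substance. One point deserves tightening: a prime in $[M - M^{1-\delta}, M]$ for ``some fixed $\delta>0$'' is \emph{not} enough to recover the stated error term $1 - m^{-1/6}$; taking $q \geq \sqrt{m}(1 - m^{-\delta/2})$ gives roughly $q^3 \geq m^{3/2}(1 - 3m^{-\delta/2})$, so you need $\delta > 1/3$, i.e.\ primes in intervals of length $o(x^{2/3})$. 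Ingham's exponent $5/8$ (and a fortiori Huxley's $7/12$), which you invoke, does supply this, so the proof goes through, but the quantifier ``some fixed $\delta>0$'' as written would only yield $1 - m^{-\delta/2}$ in place of $1 - m^{-1/6}$. The ``in particular'' clause is handled correctly.
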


In order to generalize Sauer-Shelah lemma we need the so-called ``shifting technique" lemma from combinatorics (see e.g. $\cite{Shifting})$.
\begin{fact}[Shifting technique]\label{shifting}\rm
Let $A$ be any finite set and $\calC\subseteq \P(A)$.
Then there is $\calC'\subseteq \P(A)$ such that:
\begin{enumerate}
\item
$|\calC|=|\calC'|$,
\item
if $\calC'$ shatters $B\subseteq A$ then so does $\calC$,
\item
if $B\subseteq C\in \calC'$ then $B\in\calC'$.
\end{enumerate}
\end{fact}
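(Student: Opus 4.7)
My plan is to define the classical Frankl down-shift operation on $\P(A)$ and iterate it until a fixed point; this fixed point will be the desired $\calC'$.

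For each $a \in A$, I would define the shift $S_a$ of a family $\calD \subseteq \P(A)$ by: for each $C \in \calD$, include $C \setminus \{a\}$ in $S_a(\calD)$ if $a \in C$ and $C \setminus \{a\} \notin \calD$, and otherwise include $C$. A short check shows the induced map $C \mapsto S_a(C)$ is a bijection onto $S_a(\calD)$: if $S_a(C_1) = S_a(C_2)$ with $C_1 \neq C_2$, then exactly one, say $C_1$, is shifted (if both are shifted or both are fixed, the equation forces $C_1 = C_2$), whence $C_2 = S_a(C_2) = C_1 \setminus \{a\}$ would be an element of $\calD$, contradicting $C_1 \setminus \{a\} \notin \calD$. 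Hence $|S_a(\calD)| = |\calD|$, and clearly $\sum_{C \in S_a(\calD)} |C| \leq \sum_{C \in \calD} |C|$ with strict inequality whenever $S_a$ genuinely moves a set.

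Starting from $\calC$, I would apply shifts $S_a$ ($a \in A$) in arbitrary order. Since $\sum_C |C|$ is a non-negative integer that strictly decreases at every non-trivial step, the process terminates at a family $\calC'$ with $|\calC'| = |\calC|$ and $S_a(\calC') = \calC'$ for all $a \in A$. Property (3) is then immediate by induction on $|C \setminus B|$: if $B \subsetneq C \in \calC'$ and $a \in C \setminus B$, then $S_a$-invariance combined with $a \in C$ forces $C \setminus \{a\} \in \calC'$, and we iterate until we reach $B$.

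The main obstacle is property (2), which I would reduce to showing that a single $S_a$ preserves shattering in the following sense: if $S_a(\calD)$ shatters $B$, so does $\calD$. Iterating this along the sequence of shifts then yields (2) for $\calC'$. If $a \notin B$, the statement is trivial since intersecting with $B$ commutes with removing $a$. If $a \in B$, the key observation is that any $D \in S_a(\calD)$ with $a \in D$ must be unshifted (shifting removes $a$), hence $D \in \calD$; moreover, the fact that $S_a$ kept $D$ despite $a \in D$ forces $D \setminus \{a\} \in \calD$. So for $B' \subseteq B$ with $a \in B'$, any $D \in S_a(\calD)$ witnessing trace $B'$ already lies in $\calD$ with the same trace. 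For $B' \subseteq B$ with $a \notin B'$, apply the previous step to $B' \cup \{a\}$ to find $D \in \calD$ with trace $B' \cup \{a\}$; then $D \setminus \{a\} \in \calD$ has trace $B'$ on $B$. This exhausts all subsets of $B$, completing the verification.
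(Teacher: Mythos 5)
Your proof is correct: the down-shift $S_a$ is well defined, the injectivity argument for $|S_a(\calD)|=|\calD|$ is right, the weight $\sum_{C}|C|$ guarantees termination at a family fixed by every $S_a$ (hence downward closed), and the two-case analysis showing that $S_a$ cannot create new shattered sets is exactly what is needed. The paper itself does not prove this statement --- it records it as a Fact with a citation to the combinatorics literature --- and your argument is the standard compression proof that the cited source gives, so there is nothing to compare beyond noting that you have supplied the omitted proof in essentially the canonical way.
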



\begin{proposition}\label{generalized Sauer}\rm
Let $\calC$ be a class of subsets of $X$. 
\begin{enumerate}
\item
Assume that $\VC_n(\calC) \leq d$. 
Then $\pi_{\calC,n}(m)\leq \sum_{i<z}\binom{m^n}{i}$ for $m\geq d$, where $z=z_n(m,d+1)$.

\item In particular, for  $m\gg n,d$, we have
 $\pi_{\calC,n}(m)\leq 2^{cm^{n-\varepsilon}\log_2 m} \leq 2^{m^{n - \varepsilon'}}$, where $c=n^{n+1-\varepsilon}$, $\varepsilon = \dfrac{1}{(d+1)^{n-1}}$ and $\varepsilon' = \varepsilon'(n,d) >0 $ is small enough.
\item
There is a class $\calC\subseteq \P(X)$ with $\VC_n(\calC)=d$ such that $\pi_{\calC,n}(m)\geq 2^{z-1}$ where $z=z_n(m,d+1)$. 
\end{enumerate}
\end{proposition}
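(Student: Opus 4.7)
For part (1) I would imitate the shifting proof of the classical Sauer--Shelah lemma and then invoke the Zarankiewicz bound to control the sizes of the resulting downward-closed sets. Fix a box $A = \prod_{i<n} A_i$ of size $m$ and set $\calD = \calC \cap A$. Applying the shifting technique (Fact \ref{shifting}) to $\calD$, viewed as a family of subsets of $A$, produces a family $\calD' \subseteq \P(A)$ of the same cardinality, downward closed under inclusion, and with the property that any $B \subseteq A$ shattered by $\calD'$ is already shattered by $\calD$, hence by $\calC$. If some $C \in \calD'$ contained a box $B$ of size $d+1$, then by downward closure $\P(B) \subseteq \calD'$, so $\calD'$ would shatter $B$, forcing $\calC$ to shatter $B$ and contradicting $\VC_n(\calC) \leq d$. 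Thus every $C \in \calD'$, viewed as a subhypergraph of the complete $n$-partite $n$-uniform hypergraph on $A_0, \ldots, A_{n-1}$, is $K^{(n)}(d+1)$-free, and the defining property of the Zarankiewicz number gives $|C| < z$. Counting all subsets of the $m^n$-element set $A$ of size strictly less than $z$ yields the stated bound.

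For part (2) I would plug the Erd\H{o}s-type bound $z \leq (nm)^{n-\varepsilon}$ from Fact \ref{Erdos} into the estimate of part (1). The crude inequality $\sum_{i<z}\binom{m^n}{i} \leq (m^n)^z$, after taking $\log_2$, gives $nz\log_2 m \leq n(nm)^{n-\varepsilon}\log_2 m = cm^{n-\varepsilon}\log_2 m$ with $c = n^{n+1-\varepsilon}$, which is the first inequality. The second inequality reduces to $c\log_2 m \leq m^{\varepsilon - \varepsilon'}$, which holds for any fixed $\varepsilon' \in (0,\varepsilon)$ once $m$ is sufficiently large.

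For part (3) the natural construction is $\calC := \{C \subseteq X : C \text{ contains no box of size } d+1\}$. The inclusion $\VC_n(\calC) \leq d$ is immediate, since shattering a box $B$ of size $d+1$ would require $B \subseteq C$ for some $C \in \calC$, contradicting the defining property. Conversely, any box $B$ of size $d$ has all its subsets in $\calC$ (a proper subset of a $d$-box cannot contain a $(d+1)$-box, nor can $B$ itself) and is therefore shattered, so $\VC_n(\calC) \geq d$. For the lower bound on the shatter function, given a box $A$ of size $m$, use the defining property of $z = z_n(m, d+1)$ to pick an edge set $E \subseteq A$ with $|E| = z-1$ containing no $K^{(n)}(d+1)$; subsets of $E$ are again $K^{(n)}(d+1)$-free, hence in $\calC$, and they give $2^{z-1}$ distinct intersections with $A$. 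The only non-routine point in the whole argument is the translation in part (1) from ``$C$ contains a $(d+1)$-box'' into ``$\calD'$ shatters a $(d+1)$-box'', which is precisely where downward closure, supplied by the shifting lemma, is essential.
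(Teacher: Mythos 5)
Your proofs of parts (1) and (2) are correct and follow essentially the same route as the paper: apply the shifting lemma to $\calC\cap A$ to get a downward-closed family of the same size, use downward closure to translate ``some member contains a box of size $d+1$'' into ``a box of size $d+1$ is shattered'', and then bound the size of each member by the Zarankiewicz number; your part (2) is precisely the ``straightforward calculation'' that the paper omits, and your constants check out. For part (3) your construction is genuinely different: you take $\calC$ to be the universal downward-closed family of all subsets of $X$ containing no box of size $d+1$, whereas the paper partitions each $X_i$ into pieces $A_i^m$ and sets $\calC=\bigcup_m \P(E^m)$ for extremal $K^{(n)}(d+1)$-free hypergraphs $E^m$ with $z_n(m,d+1)-1$ edges. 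Both lower bounds on $\pi_{\calC,n}(m)$ ultimately invoke the same extremal hypergraphs (you place one inside the given box $A$; the paper builds them into $\calC$ from the outset), and both obtain $\VC_n(\calC)\leq d$ from the observation that a shattered box must be entirely contained in some member of $\calC$. Your version has the small advantage that $\VC_n(\calC)\geq d$ is immediate --- every $d$-box is shattered because no subset of it can contain a $(d+1)$-box --- whereas the paper's ``clearly $\VC_n(\calC)\geq d$'' tacitly requires the extremal graphs $E^m$ to contain a copy of $K^{(n)}(d)$; it also avoids the bookkeeping of identifying each $X_i$ with $\omega$ and decomposing it. Both arguments are correct.
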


Note that the first item in the proposition gives Sauer-Shelah lemma where $n=1$,
since $z_1(m,d+1)=d+1$.
In addition, by Fact \ref{lower bound}, we can find a class $\calC$ and $c>0$ such that $\pi_{\calC,2}(m)\geq 2^{cm^{3/2}}$ for every $m$.
Unfortunately, the inequality $\pi_{\calC,n}(m)\leq \sum_{i<z}\binom{m^n}{i}$ may not be tight.
\begin{proof}[Proof of Proposition \ref{generalized Sauer}]
(1):
 We show that $\pi_{\calC,n}(m)\leq \sum_{i<z}\binom{m^n}{i}$.
Let $A\subseteq X$ be a box of size $m$.
It is enough to show that $|\calC \cap A|\leq\sum_{i<z}\binom{m^n}{i}$.
Let $\calC'$ be given by Lemma \ref{shifting} applied to $\calC \cap A$. By the third condition in the lemma, 
every $B\in\calC'$ is shattered by $\calC'$, and moreover $\calC$ shatters all members in $\calC'$ by the second condition. Hence 
$\calC'$ contains no box of size $d+1$ since $\VC_n(\calC) \leq d$.

\begin{claim*}\rm
For $B\in \calC'$, $|B|<z_n(m,d+1)$.
\end{claim*}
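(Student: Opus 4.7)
The plan is to reinterpret each $B \in \mathcal{C}'$ as an $n$-partite $n$-uniform hypergraph and then invoke the definition of the Zarankiewicz number. Since the box $A$ has size $m$, we may write $A = \prod_{i<n} A_i$ with $|A_i| = m$. A subset $B \subseteq A$ is then nothing but the edge set of an $n$-partite $n$-uniform hypergraph on vertex parts $A_0,\ldots,A_{n-1}$, i.e.\ of type $G^{(n)}(m)$. Under this identification, a sub-box $B_0 \subseteq B$ of size $d+1$ is exactly a copy of $K^{(n)}(d+1)$ inside this hypergraph.

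Next I would use the properties of $\mathcal{C}'$ supplied by the shifting technique to show that $B$ contains no box of size $d+1$. By item (3) of Fact \ref{shifting}, every subset of $B$ lies in $\mathcal{C}'$, so $\mathcal{C}'$ trivially shatters $B$ itself (and hence any sub-box of $B$). By item (2), whatever $\mathcal{C}'$ shatters $\mathcal{C}$ also shatters; so $\mathcal{C}$ would shatter any sub-box of $B$. But $\VC_n(\mathcal{C}) \leq d$ forbids the shattering of any box of size $d+1$, so $B$ cannot contain such a sub-box.

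Finally, I would apply the definition of the Zarankiewicz number directly: $z_n(m,d+1)$ is the least $z$ such that every $G^{(n)}(m)$ with at least $z$ edges contains a $K^{(n)}(d+1)$. Since the hypergraph associated with $B$ avoids $K^{(n)}(d+1)$, it must have strictly fewer than $z_n(m,d+1)$ edges, i.e.\ $|B| < z_n(m,d+1)$, as required. There is no serious obstacle here — the argument is a one-line translation between the combinatorial language of boxes/shattering and that of hypergraphs/Zarankiewicz numbers; the only point worth being careful about is the direction of implication in item (2) of the shifting lemma, which runs the way we need.
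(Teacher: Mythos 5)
Your proposal is correct and follows essentially the same route as the paper: identify $B$ with the edge set of an $n$-partite $n$-uniform hypergraph on the parts of the box $A$, use downward closure of $\calC'$ together with the transfer of shattering back to $\calC$ to rule out any sub-box (i.e.\ copy of $K^{(n)}(d+1)$) inside $B$, and then invoke the definition of $z_n(m,d+1)$. The paper phrases this as a proof by contradiction while you argue contrapositively, but the content is identical.
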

\begin{proof}[Proof of the claim] Suppose that $|B|\geq z_n(m,d+1)$.
Consider the $n$-partite $n$-uniform hypergraph $G=(A_0\sqcup\ldots \sqcup A_{n-1}; B)$ (recall that $A={\prod}_{i<n}A_i$ is a box of size $m$).
Then $G$ has a subgraph $G'\cong K^{(n)}(d+1)$ by Fact \ref{Erdos}.
Notice that the set of edges $B'$ of $G'$ (i.e. $B'=E(G')$)  is a subset of $B$, hence $B'$ is shattered by $\calC$.
However, $B'$ is a box of size $d+1$. This contradicts the fact that $\VC_n(\calC) \leq d$.
\end{proof}

Therefore $\calC'\subseteq \{B\subseteq A: |B|<z_n(m;d+1)\}$ and so, 
$$
|\calC \cap A|=|\calC'|\leq|\{B\subseteq A: |B|<z_n(m;d+1)\}|\leq\sum_{i<z}\binom{m^n}{i}.
$$

(2): A straightforward calculation using (1) and Fact \ref{Erdos}.

(3):
Without loss of generality, we may assume $X_i=\omega$ for all $i<n$, since the shatter function $\pi_{\calC,n}$ is determined locally, i.e. if $X\subseteq X'$ and $\calC' = \calC$ is a family of subsets of $X'$ then $\pi_{\calC', n}(m)=\pi_{\calC,n}(m)$.
For each $m\in\omega$ with $m\geq d$, let $(A^m_0\cup\ldots \cup A^m_{n-1}; E^m)$ be an $n$-partite $n$-uniform hypergraph having $(z_n(m,d+1)-1)$-edges with no subgraph isomorphic to $K^{(n)}(d+1)$.
We may assume that $X_i$ is the disjoint union of $A_i^m$ $(m\geq d)$.
Let $\calC = \bigcup_m \P(E^m)$.
Clearly, we have $\pi_{\calC,n}(m)\geq 2^{z_n(m,d+1)-1}$ and $\VC_n(\calC)\geq d$.
On the other hand, since every $C\in \calC$ is in some 
$\P(E^m)$,
every box $B$ shattered by $\calC$ must be a subset of some ${\prod}_iA_i^m$.
This means $\VC_n(\calC)\leq d$. 
\end{proof}

\subsection{$VC_n$-dimension and $n$-dependence}
In this subsection, we translate our situation with an $n$-dependent formula into the theory of $\VC_n$-dimension.

\begin{definition}\rm \label{def: pi_phi}
Let $\Delta(x,y_0,\ldots ,y_{n-1})$ be a set of formulas with $|y_k|=l_k$,
 and  for each $k<n$, let 
$A_k$ be a (small) set of {\it tuples} of length $l_k$ in a monster model.
A (complete) $\Delta$-type $p(x)$ over $(A_0,\ldots ,A_{n-1})$ is a (maximal) consistent subset of
$\{\varphi(x,a_0,\ldots ,a_{n-1})^{\rm{if} (i=0)}: \varphi\in\Delta, a_k\in A_k, i<2\}$, and $S_\Delta(A_0,\ldots ,A_{n-1})$ is the set of complete $\Delta$-types over $(A_0,\ldots ,A_{n-1})$. 
For a natural number $m\in \omega$, we put
$$\pi_\Delta(m):= \sup\{|S_\Delta(B_0,\ldots ,B_{n-1})|: |B_0|=\ldots =|B_{n-1}|=m\}.$$
If $\Delta $ consists only of a single formula $\phi$, then we simply write $S_\phi, \pi_\phi$, etc.

\end{definition}

\begin{remark}\label{remark on number of types}\rm
Let $\varphi(x,y_0,\ldots ,y_{n-1})$ and $\psi(x,y_0,\ldots ,y_{n-1})$ be formulas.
\begin{enumerate}
\item
$0\leq\pi_\varphi(m)\leq 2^{m^n}$ for every $m\in\omega$.
\item
\begin{enumerate}
\item
$\pi_{\neg\varphi}(m)=\pi_\varphi(m)$.
\item
$\pi_{\varphi\wedge\psi}(m)\leq\pi_{\{\varphi,\psi\}}(m)\leq \pi_{\varphi}(m)\cdot\pi_\psi(m)$.
\end{enumerate}
\item
The following are equivalent:
\begin{enumerate}
\item
$\varphi$ is $n$-dependent.
\item
$\pi_\varphi(m)<2^{m^n}$ for some $m\in\omega$.
\item
There is $d\in\omega$ such that $\pi_\varphi(m)=2^{m^n}$ for $m \leq d$ and $\pi_\varphi(m)<2^{m^n}$ for $m > d$. 
\end{enumerate}
\end{enumerate}
\end{remark}
We call 
the 
number 
$d$ in condition (3c) the (dual $\VC_n$-)dimension of $\varphi$. The dimension of $\varphi$ will be denoted by $\dim(\varphi)$. 
In \cite[Section 5(H), Question 5.67(1)]{Shelah:kx}, Shelah asks whether the following condition $(*)$ is equivalent to $n$-dependence of $\varphi(x,y_0,\ldots ,y_{n-1})$:
\begin{itemize}
\item[$(*)$] There is $k\in \omega$ such that
$\pi_\varphi(m) \leq 2^{cm^{n-1}}$ for all $m>k$,
\end{itemize}
where $c=|x|$.
Clearly $(*)$ implies $n$-dependence, however $(*)$ is too strong to be equivalent to it. In fact, as stated it is trivially false for $n=1$. But even if we fix the $n=1$ case by replacing $2^{cm^{n-1}}$ with $2^{cm^{n-1}\log_2 m}$, it is still too strong for larger $n$, as the following theorem demonstrates.
\begin{theorem}\label{counting types}\rm
\begin{enumerate}
\item
(Weak form of $(*)$)
If $\varphi(x,y_0,\ldots ,y_{n-1})$ is $n$-dependent with $\dim(\varphi) \leq d$ then
$\pi_\varphi(m)\leq \sum_{i<z}\binom{m^n}{i}$ for $d\leq m$, where $z=z_n(m,d+1)$. In particular, for $m \gg n,d$ we have $\pi_\varphi(m) \leq 2^{cm^{n-\varepsilon}\log_2 m} \leq 2^{m^{n-\varepsilon'}}$
where $c=n^{n+1-\varepsilon}$, $\varepsilon=\dfrac{1}{(d+1)^{n-1}}$ and $\varepsilon' = \varepsilon'(n,d) > 0$ is small enough.
\item
(Counterexample for Shelah's question):
There are $c>0$, a theory $T$ and a formula $\varphi(x,y_0,y_1)$ such that
$\varphi$ is $2$-dependent and $\pi_\varphi(m)\geq 2^{cm^{3/2}}$ for every $m$.
\end{enumerate}
\end{theorem}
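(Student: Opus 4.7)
The plan is to deduce both parts from Proposition \ref{generalized Sauer} via the standard dictionary between $\varphi$-types over finite parameter sets and set-systems of definable instances.

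\textbf{Part (1).} Fix sets $B_0,\ldots,B_{n-1}$, each of size $m$, and for each $b$ in the monster model $\M$ set
\[
\calC_b := \{(a_0,\ldots,a_{n-1}) \in B_0 \times \cdots \times B_{n-1} : \models \varphi(b;a_0,\ldots,a_{n-1})\},
\]
and let $\calC := \{\calC_b : b \in \M\} \subseteq \P(X)$, where $X := B_0 \times \cdots \times B_{n-1}$. Two elements $b, b'$ realize the same $\varphi$-type over $(B_0,\ldots,B_{n-1})$ exactly when $\calC_b = \calC_{b'}$, so $|S_\varphi(B_0,\ldots,B_{n-1})| = |\calC \cap X|$. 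A sub-box of size $k$ in $X$ is shattered by $\calC$ precisely when enumerating its sides gives an instance of $\IP_n$ of size $k$ for $\varphi$; therefore $\VC_n(\calC) \leq \dim(\varphi) \leq d$. Applying Proposition \ref{generalized Sauer}(1),(2) and taking suprema over $(B_0,\ldots,B_{n-1})$ yields the stated bounds on $\pi_\varphi(m)$.

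\textbf{Part (2).} Combining the construction from Proposition \ref{generalized Sauer}(3) at $n=2, d=1$ with Fact \ref{lower bound}: for each $m$ fix a bipartite graph $(A_0^m \sqcup A_1^m; E^m)$ with $|A_i^m| = m$, $|E^m| \geq cm^{3/2}$, and no $K_{2,2}$ subgraph. Form the three-sorted structure $\M$ with sorts $X_0 := \bigsqcup_m A_0^m$, $X_1 := \bigsqcup_m A_1^m$, $Y := \bigsqcup_m \P(E^m)$, and one ternary relation $R \subseteq Y \times X_0 \times X_1$ defined by $R(y;x_0,x_1) \Leftrightarrow (x_0,x_1) \in y$. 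Let $T := \Th(\M)$ and $\varphi(y;x_0,x_1) := R(y,x_0,x_1)$. Taking $B_i := A_i^m \subseteq \M$, the instances $R_y \cap (B_0 \times B_1) = y$ with $y \in \P(E^m) \subseteq Y$ realize all $2^{|E^m|}$ subsets of $E^m$, hence $\pi_\varphi(m) \geq 2^{cm^{3/2}}$.

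It remains to verify that $\varphi$ is $2$-dependent. In $\M$ each $R_y$ is $K_{2,2}$-free, so no $2 \times 2$ box in $X_0 \times X_1$ is shattered by the family $\{R_y : y \in Y\}$. The assertion ``some $2 \times 2$ box is shattered'' is an existential sentence of $T$ (asserting the existence of distinct $a_0, a_0', a_1, a_1'$ and $16$ witnesses $y_s$ realizing each of the $16$ patterns on the box); it fails in $\M$ and, by completeness of $T$, in every model of $T$, including the monster. Hence $\dim(\varphi) \leq 1$ and $2$-dependence of $\varphi$ follows from Remark \ref{remark on number of types}(3). The main technical point is precisely this transfer from the standard model to the monster, which works because shattering of a fixed finite box is a first-order condition.
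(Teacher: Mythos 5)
Your proof is correct and follows essentially the same route as the paper: part (1) is the dictionary of Lemmas \ref{coincide1} and \ref{coincide2} combined with Proposition \ref{generalized Sauer}, and part (2) encodes the extremal $K_{2,2}$-free family from Proposition \ref{generalized Sauer}(3) and Fact \ref{lower bound} as a definable relation (your multi-sorted presentation versus the paper's one-sorted $M=(Y\cup X_0,R)$ is cosmetic). Your explicit remark that shattering a fixed finite box is first-order, hence transfers from the standard model to the monster, is exactly the point the paper covers by noting that $\pi_\varphi$ and $\dim(\varphi)$ do not depend on the model.
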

We first need to introduce some notation and generalize some standard observations from $n=1$ to arbitrary $n$.
Let $\varphi(x,y_0,\ldots ,y_{n-1})$ be a formula and fix a model $M$ of $T$. 
For simplicity of notation we will assume all variables to be of length $1$, for example, $|x|=|y_0|=|a_0|=1$.
The class ${\cal C}_\varphi$ is defined as  $$\calC_{\varphi}=\{\varphi(b,M^n): b\in M\}\subseteq 2^{M^n},$$ where $\varphi(b,M^n)=\{(a_0,\ldots ,a_{n-1})\in M^n: M\models\varphi(b,a_0,\ldots ,a_{n-1})\}$. 

\begin{lemma}\label{coincide1}\rm
For every finite $A\subseteq M^n$,
we have $|S_\varphi(A)|=|{\cal C}_{\varphi} \cap A|$, where $S_\varphi(A)$ is the set of all complete $\phi$-types. (A complete $\phi$-type over $A$ is a maximal consistent set of formulas of the form $\varphi(x,a_0,\ldots ,a_{n-1})$ or $\neg \varphi(x,a_0,\ldots ,a_{n-1})$ with $(a_0,\ldots ,a_{n-1})\in A$.)
\end{lemma}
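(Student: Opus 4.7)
The plan is to set up a direct bijection between $S_\varphi(A)$ and $\calC_\varphi \cap A$; the lemma is essentially an unwinding of the two definitions, with no combinatorial content.

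First, for each $b \in M$ I would consider the trace $X_b := \varphi(b, M^n) \cap A \subseteq A$, so that by the very definition of $\calC_\varphi$ one has $\calC_\varphi \cap A = \{X_b : b \in M\}$. Next, I would observe that for any $b \in M$ the complete $\varphi$-type $\tp_\varphi(b/A)$ consists of exactly the formulas $\varphi(x; a_0, \ldots, a_{n-1})$ for $(a_0, \ldots, a_{n-1}) \in X_b$ together with the formulas $\neg\varphi(x; a_0, \ldots, a_{n-1})$ for $(a_0, \ldots, a_{n-1}) \in A \setminus X_b$. Thus $\tp_\varphi(b/A)$ and $X_b$ determine each other, which gives a canonical bijection between the set of complete $\varphi$-types over $A$ that are realized by some element of $M$ and the family $\calC_\varphi \cap A$.

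The only remaining point is that every $p \in S_\varphi(A)$ is realized in $M$. This is automatic under the standing saturation assumption on the ambient model (i.e., working in a monster model, as is implicit throughout the paper): since $A$ is finite, so is $S_\varphi(A)$ (it injects into $2^A$), so any $|A|^+$-saturated $M$ realizes all members of $S_\varphi(A)$. Combining this with the bijection above yields $|S_\varphi(A)| = |\calC_\varphi \cap A|$. The only potential pitfall is the bookkeeping around saturation --- making sure the same $M$ both defines $\calC_\varphi$ and realizes all types --- and otherwise the proof is just a matching of definitions.
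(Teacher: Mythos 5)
Your proof is correct and matches the paper's argument essentially verbatim: both set up the bijection $p \mapsto \varphi(b,M^n)\cap A$ for $b\models p$ and reduce everything to the observation that realized types over $A$ correspond to traces on $A$. The only (cosmetic) difference is your appeal to saturation for realizing each $p\in S_\varphi(A)$ in $M$; since $A$ is finite, each such $p$ is equivalent to a single consistent formula with parameters in $M$, so it is realized in $M$ by elementarity alone, with no saturation hypothesis needed.
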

\begin{proof}
Note that since $A$ is finite, $p$ is realized in $M$ for every $p\in S_\varphi$.
Consider 
the map from $S_\varphi$ to $\calC_{\varphi} \cap A$ given by $p(x)\mapsto \varphi(b,M^n)\cap A$ for some $b\models p$.
Notice that $b$ and $b'$ satisfy the same type $p$ if and only if for every $(a_0,\ldots ,a_{n-1})\in A$, $\varphi(b,a_0,\ldots ,a_{n-1})\leftrightarrow\varphi(b',a_0,\ldots ,a_{n-1})$ holds.
Hence the map is well-defined and injective.
Moreover, if $\varphi(b,M^n)\cap A\in {\cal C}_{\varphi} \cap A$ then we can find $\tp_\varphi(b/A)\in S_\varphi(A)$. So the map is a bijection.
\end{proof}
The above lemma shows that there is no difference between counting types and counting the size of the restricted class. Hence, by the definition, we have:
\begin{lemma}\label{coincide2}\rm
\begin{enumerate}
\item
$\pi_\varphi(m)=\pi_{{\cal C}_\varphi,n}(m)$ for every $m\in\omega$.
\item
$\dim(\varphi)=\VC_n({\cal C}_\varphi)$. 
In particular, a formula $\varphi$ is $n$-dependent if and only if the
$\VC_n$-dimension of ${\cal C}_\varphi$ is finite.
\end{enumerate}
\end{lemma}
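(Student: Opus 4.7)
The plan is to deduce Lemma \ref{coincide2} directly from Lemma \ref{coincide1} together with Remark \ref{remark on number of types}(3) and the second remark following the definition of $\VC_n$. The only mild technical point is to assume the ambient model $M$ is sufficiently saturated (one can simply take $M=\M$), so that the supremum in the definition of $\pi_\varphi(m)$, originally ranging over small subsets of tuples in the monster, is witnessed by boxes inside $M^n$, and so that every consistent $\varphi$-type over a finite parameter set is realized in $M$ (as needed for Lemma \ref{coincide1}).

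For (1), observe that a box $A\subseteq M^n$ of size $m$ is by definition a Cartesian product $A=B_0\times\cdots\times B_{n-1}$ with $|B_i|=m$. Under this correspondence, the set $S_\varphi(A)$ of complete $\varphi$-types appearing in Lemma \ref{coincide1} coincides with the set $S_\varphi(B_0,\ldots,B_{n-1})$ used in the definition of $\pi_\varphi(m)$. Applying Lemma \ref{coincide1} termwise then yields
$$\pi_\varphi(m)=\sup_{|B_i|=m}|S_\varphi(B_0,\ldots,B_{n-1})|=\max_{A\text{ a box of size }m}|\calC_\varphi\cap A|=\pi_{\calC_\varphi,n}(m).$$

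For (2), Remark \ref{remark on number of types}(3c) characterizes $\dim(\varphi)$ (with the convention $\dim(\varphi)=\infty$ when $\varphi$ is $n$-independent) as the largest $m$ for which $\pi_\varphi(m)=2^{m^n}$, while the second remark following the definition of $\VC_n$ characterizes $\VC_n(\calC_\varphi)$ as the largest $m$ for which $\pi_{\calC_\varphi,n}(m)=2^{m^n}$. By (1) these two shatter functions agree, so the thresholds coincide: $\dim(\varphi)=\VC_n(\calC_\varphi)$. The ``in particular'' is then immediate from the equivalence (3a)$\Leftrightarrow$(3b) of Remark \ref{remark on number of types}: $\varphi$ is $n$-dependent iff $\pi_\varphi(m)<2^{m^n}$ for some $m$, iff $\VC_n(\calC_\varphi)$ is finite.

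No genuine obstacle arises: the entire statement is a bookkeeping consequence of Lemma \ref{coincide1}, which has already done the substantive work of translating $\varphi$-types into traces of $\calC_\varphi$ on a box. The only thing one must be vigilant about is not conflating ``finite subset of $M^n$'' with ``box in $M^n$'' — it is crucial here that the definitions of $\pi_\varphi$ and $\pi_{\calC_\varphi,n}$ both restrict attention to product sets of equal side-length $m$, so that the two notions of shattering literally match under the correspondence above.
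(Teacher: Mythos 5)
Your proposal is correct and follows essentially the same route as the paper, which likewise presents the lemma as an immediate bookkeeping consequence of Lemma \ref{coincide1} together with the definitions of the two shatter functions and the characterizations of $\dim(\varphi)$ and $\VC_n$. Your added care about boxes versus arbitrary finite sets and about the choice of ambient model is sound but not a departure from the paper's argument.
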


Note that $\pi_{\varphi}$ and $\dim(\varphi)$ do not depend on the model inside which they are calculated, thus they are indeed properties of a formula. 
Now, we give a proof of our theorem.
\begin{proof}[Proof of Theorem \ref{counting types}]
(1): Immediate from Proposition \ref{generalized Sauer} and Lemma \ref{coincide2}.

(2): By the second item of Proposition \ref{generalized Sauer} and the remark after  
its statement, for countable sets $X_0$ and $X_1$, we can find $c>0$ and $\calC\subseteq \P(X_0\times X_1)$ such that $\VC_2(\calC)=1$ and $\pi_{\calC,2}(m)\geq 2^{cm^{3/2}}$ for all $m$.
We may assume that $X_0=X_1$.
With a set $Y=\{b_C:C\in\calC\}$, we define a structure $M=(Y\cup X_0, R(x,y_0,y_1))$ by the following:
$R(b,a_0,a_1)$ if and only if $(a_0,a_1)\in C\subseteq X_0^2$ and $b=b_C$ for some $C\in\calC$.
Then, in $M$, we have $\calC_R = \calC$, hence $\pi_{R}(m)=\pi_{\calC,2}(m)$.
\end{proof}

\begin{corollary}\rm\label{boolean combinations}
Let $\varphi(x,y_0,\ldots ,y_{n-1})$ and $\psi(x,y_0,\ldots ,y_{n-1})$ be $n$-dependent formulas.
Then $\neg\varphi$, $\varphi\wedge\psi$ and $\varphi\vee\psi$ are $n$-dependent.
\end{corollary}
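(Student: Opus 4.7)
The plan is to deduce this directly from the bookkeeping established in Remark \ref{remark on number of types} together with the quantitative bound in Theorem \ref{counting types}(1). Recall that by Remark \ref{remark on number of types}(3) a formula $\chi(x,y_0,\ldots,y_{n-1})$ is $n$-dependent precisely when $\pi_\chi(m)<2^{m^n}$ for some (equivalently, for all sufficiently large) $m$, so it suffices to produce such a bound for each of the three boolean combinations.

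First I would dispose of $\neg\varphi$: by Remark \ref{remark on number of types}(2a) we have $\pi_{\neg\varphi}(m)=\pi_\varphi(m)$, and the conclusion is immediate. Next, for $\varphi\wedge\psi$, I would use the product estimate $\pi_{\varphi\wedge\psi}(m)\leq\pi_\varphi(m)\cdot\pi_\psi(m)$ from Remark \ref{remark on number of types}(2b). The key point is that the naive bound $\pi_\varphi(m),\pi_\psi(m)<2^{m^n}$ alone would give only $\pi_{\varphi\wedge\psi}(m)<2^{2m^n}$, which is useless. However, Theorem \ref{counting types}(1) provides strictly sub-exponential control: for some $\varepsilon'>0$ (depending on $\dim(\varphi)$ and $\dim(\psi)$, so we take the minimum) and all sufficiently large $m$,
\[
\pi_\varphi(m)\leq 2^{m^{n-\varepsilon'}}\quad\text{and}\quad \pi_\psi(m)\leq 2^{m^{n-\varepsilon'}}.
\]
Multiplying and comparing exponents, $\pi_{\varphi\wedge\psi}(m)\leq 2^{2m^{n-\varepsilon'}}$, and since $2m^{n-\varepsilon'}<m^n$ for all sufficiently large $m$, we conclude $\pi_{\varphi\wedge\psi}(m)<2^{m^n}$ eventually, so $\varphi\wedge\psi$ is $n$-dependent by Remark \ref{remark on number of types}(3).

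Finally, $\varphi\vee\psi$ follows by De Morgan: $\varphi\vee\psi$ is equivalent to $\neg(\neg\varphi\wedge\neg\psi)$, so applying negation-closure and conjunction-closure established in the previous two steps yields the result. There is essentially no obstacle here beyond noticing that the weak bound from Remark \ref{remark on number of types}(3) is insufficient and that one must invoke the quantitative sub-exponential estimate of Theorem \ref{counting types}(1) to absorb the factor of two coming from the product bound.
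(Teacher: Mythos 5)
Your proof is correct and follows essentially the same route as the paper: negation via $\pi_{\neg\varphi}=\pi_\varphi$, conjunction via the product bound $\pi_{\varphi\wedge\psi}(m)\leq\pi_\varphi(m)\cdot\pi_\psi(m)$ combined with the sub-exponential estimate $\pi(m)\leq 2^{m^{n-\varepsilon'}}$ from Theorem \ref{counting types}(1), and disjunction by De Morgan. You are also right to flag that the naive bound $\pi<2^{m^n}$ alone is insufficient and that the quantitative estimate is the essential ingredient, which is exactly the point of the paper's one-line computation $\log(\pi_\varphi(m)\pi_\psi(m))=O(m^{n-\varepsilon})$.
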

\begin{proof}
Immediate from Remark \ref{remark on number of types} and Theorem \ref{counting types}, since
$$
\log (\pi_\varphi(m)\pi_\psi(m))=\log(\pi_\varphi(m)) + \log(\pi_\psi(m))=O(m^{n-\varepsilon})
$$
for some $\varepsilon>0$.
\end{proof}

\section{Generalized indiscernibles} \label{sec: Gen Indisc}

\subsection{Ramsey property and hypergraphs} 
In this subsection, we arrange several facts in structural Ramsey theory with hypergraphs. We postpone some of the proofs until the appendix.

Let $L_0$ be a finite relational language, and let $A,B,C$ be $L_0$-structures. We denote by $\binom{B}{A}$ the set of all $A'\subseteq B$ such that $A'\cong_{L_0} A$. If $A$ has no non-trivial automorphisms, then $\binom{B}{A}$ is considered as a set of all embeddings $A\to B$. Using this notation, for $k\in \omega$, we write $C\to(B)^A_k$ to denote the following property: for every map $c:\binom{C}{A}\to k$ (called a coloring) there is $B'\in\binom{C}{B}$ such that $c|\binom{B'}{A}$ is constant.


\begin{definition}\rm
Let $K$ be a set of (the isomorphism types of) $L_0$-structures and let $A,B\in K$.
We say that $K$ has the $(A,B)$-Ramsey property if  for every $k\in \omega$ there is $C\in K$ such that $C\to (B)^A_k$.
In addition, if $K$ has the $(A,B)$-Ramsey property for every $A,B\in K$, then we say that $K$ has Ramsey property (or it is a Ramsey class).
\end{definition}

We introduce three Ramsey classes: ordered $n$-partite sets, ordered $n$-uniform hypergraphs and ordered $n$-partite $n$-uniform hypergraphs.

Let $L_{\op}=\{<,P_0(x),\ldots ,P_{n-1}(x)\}$.
An ordered $n$-partite set is an $L_{\op}$-structure $A$ such that $A$ is the disjoint union of $P_0(A),\ldots ,P_{n-1}(A)$ and that $<$ is a linear ordering on $A$ with $P_0(A)<\ldots <P_{n-1}(A)$.
\begin{fact}[see Appendix A, Proposition \ref{partite ordered sets has Ramsey property}]\label{Ramsey property of ordered partite sets}\rm
Let $K$ be the set of all finite ordered $n$-partite sets and let $K^*=\{A:A\subseteq B\in K\}$ be the hereditary closure of $K$. 
Then $K$ and $K^*$ are Ramsey classes.
The Fra\"{i}ss\'e limit of $K^*$ will be denoted by $O_{n,p}$.
\end{fact}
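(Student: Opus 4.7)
The first step is to notice that in the language $L_{\op}$, a finite ordered $n$-partite set $A$ is determined up to isomorphism by the tuple $(a_0,\ldots,a_{n-1})$ with $a_i=|P_i(A)|$, because the condition $P_0<P_1<\ldots<P_{n-1}$ forces the linear order to be the concatenation of the linear orders on each part. Moreover, an embedding $A\hookrightarrow C$ is the same data as a choice, for each $i<n$, of a subset of $P_i(C)$ of size $a_i$ (the order is then forced). Hence, writing $b_i=|P_i(B)|$ and looking for $C$ with $c_i=|P_i(C)|$, the statement $C\to(B)^{A}_{k}$ becomes the following purely combinatorial \emph{product Ramsey} claim: for all $(a_i),(b_i),k$ one can find $(c_i)$ such that every $k$-coloring of the box $\prod_{i<n}\binom{[c_i]}{a_i}$ admits a monochromatic sub-box $\prod_{i<n}\binom{T_i}{a_i}$ with $T_i\subseteq[c_i]$ of size $b_i$.

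I would prove this product Ramsey statement by induction on $n$. The case $n=1$ is the finite Ramsey theorem. For the inductive step, given a $k$-coloring $\chi\colon \prod_{i<n}\binom{[c_i]}{a_i}\to[k]$, define an auxiliary coloring on the first $n-1$ coordinates by sending $(S_0,\ldots,S_{n-2})$ to the function $S_{n-1}\mapsto\chi(S_0,\ldots,S_{n-1})$, viewed as an element of $[k]^{\binom{c_{n-1}}{a_{n-1}}}$. Choosing $c_0,\ldots,c_{n-2}$ large enough relative to this exponentially larger color palette and to $(b_0,\ldots,b_{n-2})$, the induction hypothesis produces sets $T_i\subseteq[c_i]$ of size $b_i$ ($i<n-1$) on which the auxiliary coloring is constant; equivalently, the restriction of $\chi$ to $\prod_{i<n-1}\binom{T_i}{a_i}\times\binom{[c_{n-1}]}{a_{n-1}}$ depends only on the last coordinate. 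A final application of classical Ramsey, with $c_{n-1}$ chosen accordingly, yields $T_{n-1}\subseteq[c_{n-1}]$ of size $b_{n-1}$ making the full box monochromatic. Unwinding, the $c_i$ are set as an iterated tower of Ramsey numbers.

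For the hereditary closure $K^*$, the only difference is that a structure $A\in K^*$ is allowed to have $P_i(A)=\emptyset$ for some $i$. Given $A,B\in K^*$, I pad each empty part of $A$ and $B$ by a single dummy element to get $A^+,B^+\in K$, apply the Ramsey property of $K$ to obtain $C^+\in K$ with $C^+\to(B^+)^{A^+}_{k}$, and then remove the dummies from $C^+$ to obtain the desired $C\in K^*$; embeddings of $A$ (resp.\ $B$) into $C$ correspond exactly to embeddings of $A^+$ (resp.\ $B^+$) into $C^+$ that send dummies to dummies, so the transfer is immediate. The only real obstacle in the whole argument is the bookkeeping in the inductive step of the product Ramsey theorem, where the color set blows up exponentially at each iteration; this is harmless since we only need the $c_i$ to be finite, and Ramsey numbers are finite for any finite number of colors.
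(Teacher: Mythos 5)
Your argument is correct, but it is organized rather differently from the paper's. The paper proves two general-purpose lemmas in the Appendix: Lemma \ref{direct sum}, which shows that a direct sum $K_0\oplus K_1$ of Ramsey classes is Ramsey by fixing $C_1\to(B_1)^{A_1}_k$ and then iterating the Ramsey property of $K_0$ once for each copy of $A_1$ in $C_1$ (building a chain $C_0^0\subseteq\cdots\subseteq C_0^m$, so the number of colors never grows); and Lemma \ref{hereditary closure}, which transfers the Ramsey property to the hereditary closure abstractly via amalgamation and rigidity. Proposition \ref{partite ordered sets has Ramsey property} then follows by combining these with the classical finite Ramsey theorem. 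You instead collapse everything to an explicit finite \emph{product Ramsey theorem} for boxes $\prod_{i<n}\binom{[c_i]}{a_i}$, proved by the other standard induction in which the colour palette is blown up to $[k]^{\binom{c_{n-1}}{a_{n-1}}}$ on the first $n-1$ coordinates; and you handle $K^*$ by padding empty parts. The two proofs of product Ramsey are classical duals of one another (iterate with few colours versus apply once with many colours), so the mathematical content is the same; what the paper's formulation buys is reusability (Lemmas \ref{hereditary closure} and \ref{direct sum} are stated for arbitrary classes and \ref{hereditary closure} is reused for partite hypergraphs in Fact \ref{Ramsey property of ordered partite hypergraphs}), while yours is more self-contained and makes the quantitative tower of Ramsey numbers visible. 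One small imprecision in your last step: there is no canonical set of ``dummies'' inside the structure $C^+$ produced by the Ramsey property, and the correspondence between copies of $A$ and copies of $A^+$ is not a bijection. The clean fix is to take $C=C^+$ itself, define the induced colouring on $\binom{C^+}{A^+}$ by forgetting the padded coordinates of a copy of $A^+$, and observe that every copy of $A$ inside the monochromatic copy $B''$ of $B^+$ extends to at least one copy of $A^+$ inside $B''$ --- which is exactly the role amalgamation plays in the paper's Lemma \ref{hereditary closure}. With that adjustment your proof is complete.
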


Let $L_{0}=\left\{ R_{i}\right\} _{i\in I}$
be a finite relational signature, let $n_{i}$ be the arity of $R_{i}$.
A \emph{hypergraph of type $L_{0}$} is a structure $\left(A,\left(R_{i}^{A}\right)_{i\in I}\right)$
such that for all $i\in I$:
\begin{itemize}
\item $R_{i}\left(a_{0},\ldots,a_{n_{i}-1}\right)$ $\Rightarrow$ $a_{0},\ldots,a_{n_{i}-1}$
are distinct,
\item $R_{i}\left(a_{0},\ldots,a_{n_{i}-1}\right)$ $\Rightarrow$ $R\left(a_{\sigma\left(0\right)},\ldots,a_{\sigma\left(n_{i} - 1\right)}\right)$
for any permutation $\sigma\in\mbox{Sym}\left(n_{i}\right)$.
\end{itemize}
Thus essentially $R_{i}^{A}\subseteq\left[A\right]^{n_{i}}$, the
set of subsets of $A$ of size $n_{i}$. Let $\mbox{OH}_{L_{0}}$
be the set of all \emph{(linearly) ordered} $L_{0}$-hypergraphs, it is a Fra\"{i}ss\'e
class and admits a Fra\"{i}ss\'e limit --- the ordered random $L_{0}$-hypergraph,
with the order isomorphic to $\left(\mathbb{Q},<\right)$. 
In particular, an ordered $L_0$-hypergraph is called an ordered $n$-uniform hypergraph if $L_0=\{R(x_0,\ldots ,x_{n-1})\}$, and $G_n$ denotes the countable ordered $n$-uniform random hypergraph.
It is proved
in \cite{MR0437351,MR692827} and independently in \cite{MR503795}
that:
\begin{fact}\label{fac: hypergraphs are Ramsey}\rm
For any finite $L_{0}$, the class of
all ordered $L_{0}$-hypergraphs $\mbox{OH}_{L_{0}}$ is a Ramsey class.
\end{fact}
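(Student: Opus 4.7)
I would prove this by the Ne\v{s}et\v{r}il--R\"odl partite construction, taking the Ramsey property of ordered $n$-partite $n$-uniform hypergraphs as a black box (this is essentially what Appendix A establishes, by iterating Fact \ref{Ramsey property of ordered partite sets} with a product-Ramsey / Hales--Jewett-style argument on the transversal $R$-edges). First I would reduce to the case $L_0 = \{R\}$ with a single relation $R$ of arity $n$: for a general finite $L_0 = \{R_i\}_{i \in I}$ one colors copies of $A$ by the tuple of $R_i$-restrictions and applies the single-relation case iteratively, relation by relation (first for the largest arity, then for the next, etc.). So from now on assume ordered $n$-uniform hypergraphs $A \hookrightarrow B$ and a number of colors $k$, and the goal is to produce $C$ with $C \to (B)^A_k$.

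For the construction itself, enumerate the embeddings $A_1, \ldots, A_t$ of $A$ into $B$. Fix an auxiliary ``partite labeling'' $\lambda : B \to \{0, \ldots, n-1\}$ in such a way that every $A_i$ becomes transversal, which after possibly first enlarging $B$ inside a bigger ordered hypergraph is a standard and harmless move. Now build a chain $P_0, P_1, \ldots, P_t$ of ordered $n$-partite $n$-uniform hypergraphs (``pictures''): $P_0$ is a disjoint union of many $\lambda$-labeled copies of $B$ glued along a common ``skeleton,'' and $P_{i+1}$ is obtained from $P_i$ by applying the partite Ramsey lemma to colorings of $\binom{P_i}{A_i}$, enlarging each part enough so that the induced coloring stabilizes on some isomorphic copy of $P_i$ sitting inside $P_{i+1}$. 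After $t$ steps, set $C$ to be the hypergraph obtained from $P_t$ by forgetting the partite labels while keeping the inherited linear order. Any $c : \binom{C}{A} \to k$ pulls back along the quotient to a coloring of $\binom{P_t}{A}$, and the iterated partite lemma yields a copy $B' \in \binom{P_t}{B}$ on which the pulled-back coloring is constant; its image in $C$ is the desired monochromatic copy of $B$.

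The main technical obstacle is the bookkeeping that makes this go through: one needs (a) every embedding of $A$ into the quotient $C$ to lift uniquely to a transversal embedding into $P_t$, so that colorings downstairs correspond faithfully to colorings upstairs; (b) the iterated reductions of $A_1, \ldots, A_t$ to be independent, in the sense that stabilizing the $i$-th copy does not disturb the earlier ones (this is why $P_0$ carries a rich skeleton and the partite lemma must be applied with large blow-ups at every stage); and (c) no spurious $R$-edges to appear when passing from $P_t$ to $C$ by forgetting the part-labels. Point (a) is handled by encoding the partite labeling as auxiliary unary predicates carried through the construction and discarded only at the very end; (b) and (c) are standard consequences of applying the partite lemma with sufficient room. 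Once this setup is in place, the correctness of the construction is a routine induction on the enumeration index $i = 0, 1, \ldots, t$.
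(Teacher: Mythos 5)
You should first note that the paper does not prove this statement at all: it is quoted as a known theorem of Ne\v{s}et\v{r}il--R\"odl and Abramson--Harrington, and the only Ramsey argument the paper actually carries out (Appendix A, Proposition \ref{ramsey property of partite graphs}) goes in the \emph{opposite} direction -- it derives the Ramsey property of ordered $n$-partite $n$-uniform hypergraphs \emph{from} Fact \ref{graph Ramsey}, which is exactly the statement you are trying to prove (for a single $n$-ary relation). This makes your very first move circular in the context of this paper: the ``partite Ramsey lemma'' you take as a black box, which you assert is ``essentially what Appendix A establishes,'' is proved in Appendix A by an appeal to the fact under discussion. It is also not the right black box: the partite lemma needed for the Ne\v{s}et\v{r}il--R\"odl construction concerns transversal copies of $A$ inside $a$-partite systems with $a=|A|$ parts and is proved via the Hales--Jewett theorem; it is a different statement from the $L_{\opg}$-Ramsey property of $n$-partite $n$-uniform hypergraphs, where $n$ is the \emph{arity} of $R$.

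This conflation of the arity with the number of parts causes a concrete failure later in your sketch: you propose a labeling $\lambda:B\to\{0,\ldots,n-1\}$ making every copy $A_i$ of $A$ transversal, but if $|A|>n$ no such labeling exists -- a transversal set meets each part at most once, so it has at most $n$ elements. In the actual partite construction the pictures are $|B|$-partite (one part per vertex of $B$), and transversality of the copies of $A$ is arranged with respect to that much finer partition. Finally, your reduction of general finite $L_0$ to a single relation ``relation by relation'' is not justified: the set $\binom{C}{A}$ of copies of $A$ is determined by \emph{all} the relations simultaneously, so stabilizing a coloring with respect to $R_1$ and then separately with respect to $R_2$ does not produce a copy of $B$ monochromatic for colorings of genuine $L_0$-copies of $A$; the standard proof handles all relations at once inside one partite construction. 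The overall strategy (partite lemma plus amalgamation of pictures) is indeed how the theorem is proved in the literature, but as written the proposal has a circular premise, a transversality step that fails for $|A|>n$, and an unjustified language reduction.
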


Fix a language $L_{\opg}=\{R(x_0,\ldots ,x_{n-1}),<, P_0(x),\ldots , P_{n-1}(x)\}$. 
A (linearly) ordered $n$-partite $n$-uniform hypergraph is an $L_{\opg}$-structure $\left(A; <, R, P_0, \ldots , P_{n-1} \right)$ such that:
\begin{enumerate}
\item
$(A;R,P_0,\ldots ,P_{n-1})$ is an $n$-partite $n$-uniform hypergraph, i.e. $A$ is the (pairwise disjoint) union $P_0 \sqcup\ldots \sqcup P_{n-1}$ such that if $(a_0,\ldots ,a_{n-1})\in R$ then $P_i\cap \{a_0\ldots a_{n-1}\}$ is a singleton for every $i<n$,
\item
$<$ is a linear ordering on $A$ with $P_0(A)<\ldots <P_{n-1}(A)$.
\end{enumerate}

\begin{fact}[see Proposition \ref{ramsey property of partite graphs} and Lemma \ref{hereditary closure}]\label{Ramsey property of ordered partite hypergraphs}\rm
Let $K$ be the set of all finite ordered $n$-partite $n$-uniform hypergraphs and let $K^*=\{A:A\subseteq B\in K\}$ be the hereditary closure of $K$.
Then both $K$ and $K^*$ have Ramsey property.
\end{fact}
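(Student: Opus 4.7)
My approach would be the \emph{partite construction} of Ne\v set\v ril--R\"odl, adapted to the ordered $n$-partite $n$-uniform setting. The statement has two parts: that the class $K$ of finite ordered $n$-partite $n$-uniform hypergraphs is Ramsey (Proposition~\ref{ramsey property of partite graphs}), and that its hereditary closure $K^*$ inherits this property (Lemma~\ref{hereditary closure}). I would prove the first part directly and then derive the second from it by a short extension argument.

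For the Ramsey property of $K$, given $A, B \in K$ and $k \geq 2$, I would construct $C \in K$ with $C \to (B)^A_k$ by iterated partite amalgamation. Enumerate the isomorphism types of \emph{pictures} of $A$-subhypergraphs inside a sufficiently large ordered $n$-partite vertex base, say there are $N$ such pictures. Build a chain $C_0 = B \subseteq C_1 \subseteq \ldots \subseteq C_N = C$, where $C_j$ is obtained from $C_{j-1}$ by amalgamating many copies of $C_{j-1}$ glued along a Hales--Jewett-style Ramsey configuration supplied by Fact~\ref{Ramsey property of ordered partite sets} (Ramsey for ordered $n$-partite sets). The crucial point is that each amalgamation is performed as a fiber product over an ordered $n$-partite \emph{set}, so both the partition $P_0 \sqcup \ldots \sqcup P_{n-1}$ and the interval ordering $P_0 < P_1 < \ldots < P_{n-1}$ are inherited automatically. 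A standard unwinding of the partite construction then shows that $C_N$ arrows $(B)^A_k$ in $K$.

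For Lemma~\ref{hereditary closure}, I would reduce to the preceding proposition. Given $A^*, B^* \in K^*$ and $k$, embed them into structures $A, B \in K$ with $A^* \subseteq A$ and $B^* \subseteq B$ (possible because $K^*$ consists, by definition, of substructures of $K$-structures: one simply adjoins dummy vertices to ensure every $P_i$ is nonempty and the partite ordering is maintained). Apply the first part to $(A, B, k)$ to obtain $C \in K \subseteq K^*$. A $k$-coloring of $\binom{C}{A^*}$ extends arbitrarily to a $k$-coloring of $\binom{C}{A}$, which yields a monochromatic $B$-copy in $C$; inside this sits a monochromatic $B^*$-subcopy with the original coloring.

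The main obstacle is the bookkeeping in the partite construction: one must verify that each amalgamation step strictly preserves the partition and the interval ordering, while being rich enough in $A$-copies of each picture type to drive the induction. The graph case ($n = 2$) is classical and the extension to $n$-uniform partite hypergraphs introduces no essentially new ideas, but the detailed verification that the fiber product in $K$ enjoys the required Ramsey-theoretic properties is the technically delicate point and is the reason a self-contained appendix is warranted.
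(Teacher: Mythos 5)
Your derivation of the Ramsey property for $K^*$ from that of $K$ has a genuine gap. After embedding $A^*\subseteq A$ and $B^*\subseteq B$ with $A,B\in K$ and taking $C\to(B)^A_k$, the only sensible way to pass from a colouring $c$ of $\binom{C}{A^*}$ to one of $\binom{C}{A}$ is to set $\tilde c(A')=c(A'')$, where $A''$ is the distinguished copy of $A^*$ sitting inside $A'$ via the fixed embedding $A^*\subseteq A$ (well defined only because ordered structures are rigid --- a hypothesis you never invoke but do need). A $\tilde c$-monochromatic copy $B'$ of $B$ then controls only those copies of $A^*$ that arise as the distinguished subcopy of \emph{some} copy of $A$ inside $B'$. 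If $B$ is produced from $B^*$ by ``adjoining dummy vertices'' arbitrarily, a copy of $A^*$ inside $B^*$ need not extend to any copy of $A$ inside $B$ realizing the chosen extension $A^*\subseteq A$: for $n=2$ take $A^*$ a single $P_0$-vertex, $A$ that vertex plus a neighbour in $P_1$, and a completion $B$ in which some $P_0$-vertex of $B^*$ has no neighbour; the colour of that copy of $A^*$ is then unconstrained, and monochromaticity of $\binom{B^{*\prime}}{A^*}$ fails. This is exactly what Lemma~\ref{hereditary closure} in the paper repairs: before applying the Ramsey property of $K$ one must use the amalgamation property of $K^*$ to replace $B$ by some $B_0\in K$ in which \emph{every} copy of $A^*$ in $B$ extends to a copy of $A$ compatible with the fixed extension, and only then colour via the distinguished subcopies. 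Without that step the argument does not close.

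For the Ramsey property of $K$ itself, your route also diverges from the paper's, and is both heavier and underspecified at its core. The paper does not rerun the partite construction: it quotes the Ne\v{s}et\v{r}il--R\"{o}dl/Abramson--Harrington theorem for ordered \emph{non-partite} $n$-uniform hypergraphs (Fact~\ref{graph Ramsey}) as a black box and reduces the partite case to it by a short encoding --- replace each vertex $v_i$ of an ordered hypergraph by $n$ clones $w^0_i,\ldots,w^{n-1}_i$, one per part, and put a transversal edge on increasingly-indexed clones exactly when the original edge is present; every ordered partite hypergraph embeds canonically into the doubled structure, so a Ramsey object upstairs yields one downstairs. If you do insist on the partite construction, note that its amalgamation steps are \emph{not} supplied by Fact~\ref{Ramsey property of ordered partite sets} (Ramsey for ordered partite sets, which follows from the classical Ramsey theorem); the partite lemma that drives the induction requires the Hales--Jewett theorem, and that is precisely the part your sketch leaves unverified.
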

The Fra\"{i}ss\'e limit of $K^*$ is called an ordered $n$-partite $n$-uniform random hypergraph, denoted by $G_{n,p}$.

\begin{remark}\label{Theory of random graph}\rm
The first order theories of $G_n$ and $G_{n,p}$ can be axiomatized in the following way:
\begin{enumerate}
\item
A structure $(M,<,R)$ is a model of $\Th(G_n)$ if and only if:
\begin{itemize}
\item
$(M,<,R)$ is an ordered $n$-uniform hypergraph,
\item
$(M,<)$ is DLO,
\item
for every finite disjoint sets $A_0,A_1\subset M^{n-1}$ and
$b_0<b_1\in M$, there is $b_0<b<b_1$ such that $R(b, a_{i,1},\ldots ,a_{i,n-1})^{{\rm if}(i=0)}$ for every $(a_{i,1},\ldots ,a_{i,n-1})\in A_i$ and $i<2$.
\end{itemize}
In particular, an ordered random $1$-hypergraph is a dense linear order with a dense co-dense subset. 

\item
A structure $(M,<,R,P_0,\ldots ,P_{n-1})$ is a model of $\Th(G_{n,p})$ if and only if:
\begin{itemize}
\item
$(M,<,R,P_0,\ldots ,P_{n-1})$ is an ordered $n$-partite $n$-uniform hypergraph,
\item
$(P_i(M), <)$ is DLO for each $i<n$,
\item
for every $j<n$, finite disjoint sets 
$A_0,A_1\subset {\prod_{i\neq j}P_i(M)}$
 and
$b_0<b_1\in P_j(M)$, there is $b_0<b<b_1$ such that $R(b,a_{i,1},\ldots ,a_{i,n-1})^{{\rm if}(i=0)}$ for every $(a_{i,1},\ldots ,a_{i,n-1})\in A_i$ and $i<2$.
\end{itemize}
\item
$G_{n,p}|L_{\op}$ is isomorphic to $O_{n,p}$.
\item
$\Th(G_n)$, $\Th(G_{n,p})$, and $\Th(O_{n,p})$ are $\omega$-categorical and admit quantifier elimination.  
\end{enumerate}
\end{remark}

\subsection{Generalized indiscernibles}
The notion of generalized indiscernibles, which was introduced in \cite[Section 2]{Scow:2012fk}, and was used implicitly by Shelah already in \cite{ShelahClassification}, is a good tool to study $n$-dependence.
\begin{definition}\rm
Let $T$ be a theory in the language $L$, and let $\M$ be a monster
model of $T$.
\begin{enumerate}
\item Let $I$ be a structure in the language $L_0$.
We say that $\bar{a}=\left(a_{i}\right)_{i\in I}$
with $a_{i}\in\M$ is an $I$-indiscernible if for
all $n\in\omega$ and all $i_{0},\ldots,i_{n}$ and $j_{0},\ldots,j_{n}$
from $I$ we have:
$$
\qftp_{L_0}\left(i_{0}, \ldots, i_{n}\right)=\qftp_{L_0}\left(j_{0},\ldots,j_{n}\right)
\Rightarrow $$
$$\tp_{L}\left(a_{i_{0}}, \ldots, a_{i_{n}}\right)=\tp_{L}\left(a_{j_{0}}, \ldots, a_{j_{n}}\right)
.$$
An $I$-indiscernible $(a_i)_{i\in I}$ is also called an $L_0$-indiscernible to clarify the structure on $I$. 
(So, for $L_1\subseteq L_0$, $(a_i)_{i\in I}$ is said to be $L_1$-indiscernible if it is $(I|L_1)$-indiscernible.)
\item For $L_0$-structures $I$ and $J$, we say that $\left(b_{i}\right)_{i\in J}$
is \emph{based on} $\left(a_{i}\right)_{i\in I}$ if for any finite
set $\Delta$ of $L$-formulas,  and for any finite tuple $\left(j_{0},\ldots,j_{n}\right)$
from $J$ there is a tuple $\left(i_{0},\ldots,i_{n}\right)$ from
$I$ such that:

\begin{itemize}
\item $\qftp_{L_0}\left(j_{0},\ldots,j_{n}\right)=\qftp_{L_0}\left(i_{0},\ldots,i_{n}\right)$ and
\item $\tp_{\Delta}\left(b_{j_{0}},\ldots,b_{j_{n}}\right)=\tp_{\Delta}\left(a_{i_{0}},\ldots,a_{i_{n}}\right)$.
\end{itemize}
\item \cite{Scow:2012fk} Let $I$ be a structure in the language $L_0$. We say that $I$ has the \emph{modeling property} if given any $\bar{a}=\left(a_{i}\right)_{i\in I}$
there exists an $L_0$-indiscernible $\bar{b}=\left(b_{i}\right)_{i\in I}$
based on $\bar{a}$.
\end{enumerate}
\end{definition}

For a class $K$ of $L_0$-structures, we say an $L_0$-structure $G$ is $K$-universal if for every $A\in K$ there is $A'\subset G$ such that $A\cong A'$.
\begin{fact}\label{modeling property}\rm \cite{Scow:2012fk} 
Let $K$ be a class of finite $L_0$-structures and let $G$ be a countable $K$-universal $L_0$-structure such that $A\in K$ for every finite $A\subset G$.
Then $K$ is a Ramsey class if and only if $G$ has the modeling property. 
\end{fact}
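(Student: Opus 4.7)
The plan is to prove the modeling property by a standard Ramsey-plus-compactness argument. Introduce fresh variables $(x_i)_{i\in G}$ and let $\Sigma((x_i)_{i\in G})$ be the partial type asserting simultaneously (i) the $L_0$-indiscernibility schema $\varphi(x_{i_0},\ldots,x_{i_n})\leftrightarrow\varphi(x_{j_0},\ldots,x_{j_n})$ for every $\varphi\in L$ and all finite tuples with $\qftp_{L_0}(i_0,\ldots,i_n)=\qftp_{L_0}(j_0,\ldots,j_n)$; and (ii) for every finite $\Delta\subseteq L$ and every finite tuple $(j_0,\ldots,j_n)$ from $G$, the finite disjunction $\bigvee_{p\in P}p(x_{j_0},\ldots,x_{j_n})$, where $P$ is the (finite) set of $\Delta$-types of the form $\tp_\Delta(a_{i_0},\ldots,a_{i_n})$ with $\qftp_{L_0}(i_0,\ldots,i_n)=\qftp_{L_0}(j_0,\ldots,j_n)$. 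A realization $(b_i)_{i\in G}$ of $\Sigma$ in $\M$ is $L_0$-indiscernible by (i) and based on $(a_i)_{i\in G}$ by (ii), so it suffices to show $\Sigma$ is consistent.

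By compactness, it is enough to verify finite satisfiability. A finite fragment of $\Sigma$ mentions only variables indexed by some finite $B\subseteq G$ (hence $B\in K$ by hypothesis) and only formulas from some finite $\Delta\subseteq L$. It is therefore enough to produce an $L_0$-embedding $f\colon B\hookrightarrow G$ such that for every $L_0$-isomorphism type $A$ of a substructure of $B$, all copies $A'\in\binom{f(B)}{A}$ yield the same $\Delta$-type under $(a_i)_{i\in G}$: the assignment $x_i\mapsto a_{f(i)}$ then satisfies the fragment, since (i) reduces to the stipulated uniformity and (ii) is automatic from the definition of $f$.

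To produce such $f$, enumerate the finitely many $L_0$-isomorphism types $A_1,\ldots,A_m$ of substructures of $B$, let $k$ bound the number of $\Delta$-types of tuples of lengths at most $|B|$, and apply the Ramsey property of $K$ iteratively: choose $C_1\in K$ with $C_1\to(B)^{A_1}_k$, and $C_j\in K$ with $C_j\to(C_{j-1})^{A_j}_k$ for $1<j\leq m$. Using $K$-universality, embed $C_m$ into $G$; colouring each copy of $A_m$ in $C_m$ by its $\Delta$-type under $(a_i)$ yields a copy $C_{m-1}'\cong C_{m-1}$ inside $C_m$ monochromatic for $A_m$. Iterating downward for $A_{m-1},\ldots,A_1$ produces a copy $B'\cong B$ inside $G$ simultaneously monochromatic for every $A_j$, yielding $f$. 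The one point to watch is that previously-gained monochromaticities must survive later Ramsey steps, but this is immediate: any colouring of $\binom{X}{A}$ constant on $X$ restricts to a constant colouring of $\binom{Y}{A}$ for every $Y\subseteq X$. This is the main obstacle, and its resolution is essentially bookkeeping enabled by the finiteness of both the enumeration $A_1,\ldots,A_m$ and the palette of $\Delta$-types.
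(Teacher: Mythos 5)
Your proof is correct and follows essentially the same route as the paper's: a Ramsey argument producing a copy of each finite $B\subseteq G$ on which the $\Delta$-types of copies of substructures are constant, followed by compactness applied to the partial type expressing $L_0$-indiscernibility together with being based on $(a_i)_{i\in G}$. You in fact supply a detail the paper leaves implicit, namely the iteration of the Ramsey property over the finitely many isomorphism types $A_1,\ldots,A_m$ of substructures of $B$ so that a single copy of $B$ is simultaneously monochromatic for all of them, which is what the final appeal to compactness really requires.
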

\begin{proof} We prove left to right for the sake of exposition.
Take any finite subsets $A\subset B\subset G$ and a formula $\varphi((x_g)_{g\in A})$. 
Since $A,B\in K$ and $K$ is a Ramsey class, there is $C\in K$ such that $C\to(B)^A_2$.
By the assumption, we may assume $C\subset G$.
Hence we can find $(a_g)_{g\in B'}\subset (a_g)_{g\in G}$ with $B'\in \binom{C}{B}$ such that for any $A', A''\in \binom{B'}{A}$,
$\varphi((a_g)_{g\in A'})\leftrightarrow\varphi({(a_g)}_{g\in A''})$.
By compactness, we have an $L_0$-indiscernible $(a'_g)_{g\in G}$ based on $(a_g)_{g\in G}$, since for given $(a_g)_{g\in G}$ the statement ``$(x_g)_{g\in G}$ is based on $(a_g)_{g\in G}$" can be expressed by a set of $L$-formulas. 
\end{proof}

The following corollary is our main tool in the next section.
\begin{corollary}\rm\label{cor: random hypergraph indiscernibles exist}
Let $G$ be one of the following structures $G_n$, $G_{n,p}$ or $O_{n,p}$,
and let $\bar{a}=\left(a_{g}\right)_{g\in G}$ be given. 
Then there is a $G$-indiscernible $\left(b_{g}\right)_{g\in G}$ based on $\bar{a}$.
\end{corollary}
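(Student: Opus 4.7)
The plan is to deduce this immediately from the modeling property (Fact \ref{modeling property}) once we verify that each of the three structures $G_n$, $G_{n,p}$, and $O_{n,p}$ arises as a countable universal object of a hereditary Ramsey class.

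First I would fix in each case the appropriate class $K$ of finite $L_0$-structures: for $G_n$, the class $\mbox{OH}_{L_0}$ of all finite ordered $n$-uniform hypergraphs (with $L_0 = L_{\opg}$ without the partition predicates); for $O_{n,p}$, the hereditary closure $K^*$ of finite ordered $n$-partite sets; and for $G_{n,p}$, the hereditary closure of finite ordered $n$-partite $n$-uniform hypergraphs. In all three cases $K$ is hereditary by construction (for $G_n$, the class of ordered $n$-uniform hypergraphs is already closed under substructures), and the given structure $G$ is the Fraïssé limit of $K$, hence countable, $K$-universal, and such that every finite substructure of $G$ belongs to $K$.

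Next I would invoke the Ramsey property in each case: Fact \ref{fac: hypergraphs are Ramsey} for $G_n$, Fact \ref{Ramsey property of ordered partite sets} for $O_{n,p}$, and Fact \ref{Ramsey property of ordered partite hypergraphs} for $G_{n,p}$. Together with the previous paragraph, this places us exactly in the hypothesis of Fact \ref{modeling property}, so $G$ has the modeling property. Applied to the given sequence $\bar{a} = (a_g)_{g \in G}$, this yields an $L_0$-indiscernible $(b_g)_{g \in G}$ based on $\bar{a}$, which is precisely the statement that $(b_g)_{g \in G}$ is $G$-indiscernible based on $\bar{a}$.

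There is essentially no obstacle here: the corollary is a packaging statement, and all the real work has already been done in establishing the Ramsey properties of these three classes (deferred to the appendix for the partite versions). The only minor verification, which I would note explicitly, is that ``$G$-indiscernibility'' in the sense of Definition of generalized indiscernibles coincides with $L_0$-indiscernibility when the index structure is $G$, since by $\omega$-categoricity and quantifier elimination (Remark \ref{Theory of random graph}) types over $\emptyset$ in $G$ are determined by quantifier-free $L_0$-types.
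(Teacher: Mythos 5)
Your proof is correct and follows exactly the route of the paper: the paper's entire proof is ``Combining Fact \ref{fac: hypergraphs are Ramsey}, Fact \ref{Ramsey property of ordered partite hypergraphs}, Fact \ref{Ramsey property of ordered partite sets} and Fact \ref{modeling property},'' and you have simply spelled out the (routine) verification that each of $G_n$, $G_{n,p}$, $O_{n,p}$ is a countable $K$-universal structure all of whose finite substructures lie in the relevant hereditary Ramsey class. One tiny remark: your final paragraph's appeal to $\omega$-categoricity is unnecessary, since $G$-indiscernibility \emph{is} $L_0$-indiscernibility by the paper's definition, which only refers to quantifier-free types in the index structure.
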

\begin{proof}
Combining Fact \ref{fac: hypergraphs are Ramsey}, Fact \ref{Ramsey property of ordered partite hypergraphs}, Fact \ref{Ramsey property of ordered partite sets} and Fact \ref{modeling property}.
\end{proof}

We see the most basic application of the above corollary.

\begin{remark}[Existence of an $L_{\op}$-indiscernible witness]\label{indiscernible witness}\rm
In the definition of $\IP_n$, the index set of a witness of $\IP_n$ is $\omega^n$.
By compactness, we can replace $\omega^n$ by any $P_0\times\ldots \times P_{n-1}$ with infinite sets $P_i$ $(i<n)$. Put $G=P_0\sqcup\ldots \sqcup P_{n-1}$ and note that it can be seen as an $L_{\op}$-structure. In this situation, we say that $(a_g)_{g\in G}$ is a witness of $\IP_n$ for $\phi$ if for any two disjoint subsets $X_0$ and $X_1$ of $P_0\times\ldots \times P_{n-1}$ we have that
$$
\{\varphi(x,a_{g_{0}},\ldots ,a_{g_{n-1}})\}_{(g_{0},\ldots ,g_{n-1})\in X_{0}}\cup\{\neg\varphi(x,a_{g_{0}},\ldots ,a_{g_{n-1}})\}_{(g_{0},\ldots ,g_{n-1})\in X_{1}}.
$$
is consistent. Furthermore, observe that if $(b_g)_{g\in O_{n,p}}$ is an $L_{\op}$-indiscernible based on $(a_g)_{g\in O_{n,p}}$, then $(b_g)_{g\in O_{n,p}}$ is also an witness of $\IP_n$ since the $L_{\op}$-isomorphism $X_0X_1\cong_{L_{\op}} Y_0Y_1$ implies that $Y_0$ and $Y_1$ are disjoint subsets of $P_0\times\ldots \times P_{n-1}$ as well.
\end{remark}


\section{$\IP_{n}$ and random hypergraph indiscernibles} \label{sec: IPn and gen indisc}
Recall that $L_{\opg}$ and $L_{\op}$ denote the languages $\left\{<,R,P_0,\ldots ,P_{n-1}\right\}$ 
and $\left\{<,P_0,\ldots ,P_{n-1}\right\}$ respectively.
In this section, we give characterizations of $n$-dependence using $L_{\opg}$-indiscernibles and $L_{\op}$-indiscernibles.
\subsection{Basic properties of $\IP_n$ and indiscernible witnesses}

We begin with some easy remarks on $n$-dependence.
\begin{remark}\rm
\begin{enumerate}
\item
A theory $T$ is $n$-dependent if and only if $T(A)$ is $n$-dependent
for every parameter set $A$. In fact, if $\varphi(x,y_0,\ldots ,y_{n-1}, A)$ has $\IP_n$ in $T(A)$ witnessed by $(a_g)_{g\in O_{n,p}}$, then $\psi(x, z_0,\ldots ,z_{n-1})$ has $\IP_n$ witnessed by $(b_g)_{g\in O_{n,p}}$ where 
$z_i=y_iw$, $\psi(x, z_0,\ldots ,z_{n-1})=\varphi(x,y_0,\ldots ,y_{n-1},w)$ and $b_g=a_gA$.
\item
Let $x\subseteq w$ and $y_i\subseteq z_i$ $(i<n)$ be variables.
If $\varphi(x,y_0,\ldots ,y_{n-1})$ is $n$-dependent then so is $\psi(w,z_0,\ldots ,z_{n-1})=\varphi(x,y_0,\ldots ,y_{n-1})$.
In other words, $n$-dependence is preserved under adding dummy variables.
\item
Suppose that $T$ admits quantifier elimination. If there is no atomic formula having $\IP_n$, then $T$ is $\NIP_n$. This follows from Corollary \ref{boolean combinations}.
\end{enumerate}
\end{remark}

For an $n$-partite $n$-uniform hypergraph $(G,R,P_0,\ldots ,P_{n-1})$, we say
a formula $\varphi(x_0,\ldots ,x_{n-1})$ encodes $G$ if there is a $G$-indexed set 
$(a_g)_{g\in G}$ such that $\models \varphi(a_{g_0},\ldots ,a_{g_{n-1}}) \Leftrightarrow R(g_0,\ldots ,g_{n-1})$ for every $g_i\in P_i$.

\begin{proposition}\label{encodes partite random graph}\rm
Let $\varphi(x,y_0,\ldots ,y_{n-1})$ be a formula.
The following are equivalent.
\begin{enumerate}
\item
$\varphi$ has $\IP_n$.
\item
$\varphi$ encodes every $(n+1)$-partite $(n+1)$-uniform hypergraph $G$.
\item
$\varphi$ encodes $G_{n+1,p}$ as a partite hypergraph.
\item
$\varphi$ encodes $G_{n+1,p}$ as a partite hypergraph by a $G_{n+1,p}$-indiscernible $(a_g)_{g\in G_{n+1,p}}$.
\end{enumerate}
\end{proposition}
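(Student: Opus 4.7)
The plan is to prove the cycle $(1) \Rightarrow (2) \Rightarrow (3) \Rightarrow (4) \Rightarrow (3) \Rightarrow (1)$, with the real content being $(3) \Rightarrow (4)$ (the Ramsey-theoretic step) and the reformulations of $\IP_n$ as a hypergraph encoding. The implication $(4) \Rightarrow (3)$ is immediate since an encoding by a $G_{n+1,p}$-indiscernible is in particular an encoding. The implication $(2) \Rightarrow (3)$ is immediate since $G_{n+1,p}$ is itself an $(n{+}1)$-partite $(n{+}1)$-uniform hypergraph.

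For $(1) \Rightarrow (2)$, let $(a_{j,i})_{j<n,\, i\in\omega}$ together with $(b_s)_{s\subseteq\omega^n}$ witness $\IP_n$ for $\varphi$. Given any $(n{+}1)$-partite hypergraph $G$ with parts $Q_0,\ldots,Q_n$, I enlarge the witness to index sets of size $|G|$ by compactness and pick injections $\iota_j:Q_j\hookrightarrow \kappa$ for $j<n$; then set $a_g:=a_{j,\iota_j(g)}$ for $g\in Q_j$, and for $g\in Q_n$ take $a_g:=b_{s_g}$ where $s_g=\{(\iota_0(g_0),\ldots,\iota_{n-1}(g_{n-1})): R(g_0,\ldots,g_{n-1},g)\}$. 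This yields an encoding of $G$ by $\varphi$. For $(3) \Rightarrow (1)$, given an encoding $(a_g)_{g\in G_{n+1,p}}$, pick countable sequences $(p^j_i)_{i\in\omega}\subseteq P_j(G_{n+1,p})$ for each $j<n$; then for any $s\subseteq\omega^n$ the partial type
\[
\bigl\{\varphi(x,a_{p^0_{i_0}},\ldots,a_{p^{n-1}_{i_{n-1}}})^{\mathrm{if}((i_0,\ldots,i_{n-1})\in s)} : (i_0,\ldots,i_{n-1})\in\omega^n\bigr\}
\]
is finitely satisfiable by the extension axioms for $G_{n+1,p}$ from Remark \ref{Theory of random graph}(2) (each finite approximation is realized inside $G_{n+1,p}$ by the encoding itself), hence realized in the monster.

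The crux is $(3) \Rightarrow (4)$. Given an encoding $(a_g)_{g\in G_{n+1,p}}$, I apply Corollary \ref{cor: random hypergraph indiscernibles exist} to obtain a $G_{n+1,p}$-indiscernible $(b_g)_{g\in G_{n+1,p}}$ based on $(a_g)$. I claim $(b_g)$ still encodes: fix $g_0\in P_0,\ldots,g_n\in P_n$ and apply the basing property with $\Delta=\{\varphi\}$ and the tuple $(g_0,\ldots,g_n)$ to obtain $(h_0,\ldots,h_n)$ with $h_i\in P_i$ such that $\qftp_{L_{\opg}}(g_0,\ldots,g_n)=\qftp_{L_{\opg}}(h_0,\ldots,h_n)$ and $\tp_\varphi(b_{g_0},\ldots,b_{g_n})=\tp_\varphi(a_{h_0},\ldots,a_{h_n})$. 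Since $R\in L_{\opg}$, the first equality gives $R(g_0,\ldots,g_n)\Leftrightarrow R(h_0,\ldots,h_n)$, and since $(a_g)$ encodes, $\models\varphi(a_{h_0},\ldots,a_{h_n})\Leftrightarrow R(h_0,\ldots,h_n)$; combining, $\models\varphi(b_{g_0},\ldots,b_{g_n})\Leftrightarrow R(g_0,\ldots,g_n)$.

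The only conceptual obstacle is verifying that indiscernibility preserves the encoding, and the key observation is that this works because $R$ is part of the language $L_{\opg}$ used to define qftp on the index structure; had we only asked for $L_{\op}$-indiscernibility, the edge relation would be forgotten. Everything else is either a compactness-plus-extension-axioms argument (for $(3)\Rightarrow(1)$) or a straightforward bookkeeping with indices and the definition of $\IP_n$ (for $(1)\Rightarrow(2)$).
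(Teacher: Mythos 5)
Your proposal is correct and follows essentially the same route as the paper: compactness plus the definition of $\IP_n$ for (1)$\Rightarrow$(2), Corollary \ref{cor: random hypergraph indiscernibles exist} together with the observation that basing in $L_{\opg}$ preserves the edge relation for (3)$\Rightarrow$(4), and the extension axioms of $G_{n+1,p}$ for getting back to (1). The only cosmetic difference is that you close the cycle via (3)$\Rightarrow$(1) rather than (4)$\Rightarrow$(1); this is equivalent, since the paper's own argument for (4)$\Rightarrow$(1) likewise uses only the randomness of $G_{n+1,p}$ and not the indiscernibility.
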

\begin{proof}
(1)$\Rightarrow$ (2):
By compactness, it is enough to check it for every finite hypergraph $G$ with
$|P_0(G)|=\ldots =|P_n(G)|=k$.
Let $(a_g)_{g\in O_{n,p}}$ be a witness of $\IP_n$ of $\varphi$.
Let $V_i\subset P_i(O_{n,p})$ be a $k$-point subset for each $i<n$.
For simplicity, we consider $V_i = P_{i+1}(G)$.
For $g\in P_0(G)$, let $X_g=\{(g_0,\ldots ,g_{n-1}): G\models R(g,g_0,\ldots ,g_{n-1}), g_i\in V_i\}$.
By the definition of $\IP_n$, we can find $b_g$ such that
$$
\varphi\left(b_{g},a_{g_0},\ldots ,a_{g_{n-1}}\right)\Leftrightarrow\left(g_0,\ldots ,g_{n-1}\right)\in X_g\mbox{.}
$$
Then letting $a_g= b_g$ for $g\in P_0(G)$, we have that $(a_g)_{g\in G}$ witnesses that $\varphi$ encodes $G$.

(2)$\Rightarrow$(3):
Trivial.

(3)$\Rightarrow$(4):
Suppose that $(a_g)_{g\in G_{n+1,p}}$ witnesses (3).
By Corollary \ref{cor: random hypergraph indiscernibles exist}, there is  a $G_{n+1,p}$-indiscernible $(b_g)_{g\in G_{n+1,p}}$ based on $(a_g)_{g\in G_{n+1,p}}$, which then also witnesses that $\varphi$ encodes $G_{n+1,p}$ as a partite hypergraph. 

(4)$\Rightarrow$(1):
Since $G_{n+1,p}$ is random, the set $\{a_g: g\in P_i(G_{n+1,p}), i>0\}$ witnesses $\IP_n$ for $\varphi$.
\end{proof}

As any permutation of parts of a countable partite random hypergraph is an automorphism,
we have that $n$-dependence is preserved under rearranging the order of the variables (in particular, one can exchange the roles of the free variable and a parameter variable):  

\begin{corollary}\rm \label{cor: permuting vars}
Let $\varphi(x,y_0,\ldots ,y_{n-1})$ be a formula.
Suppose that $(w,z_0,\ldots ,z_{n-1})$ is any permutation of the sequence $(x,y_0,\ldots ,y_{n-1})$.
Then $\psi(w,z_0,\ldots ,z_{n-1})$ is $n$-dependent if and only if $\varphi(x,y_0,\ldots ,y_{n-1})$ is $n$-dependent, where $\psi(w,z_0,\ldots ,z_{n-1})=\varphi(x,y_0,\ldots ,y_{n-1})$.
\end{corollary}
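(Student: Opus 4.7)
The plan is to reduce the corollary to the equivalence (1)$\Leftrightarrow$(2) of Proposition \ref{encodes partite random graph}, which characterizes $\IP_n$ of $\varphi(x,y_0,\ldots,y_{n-1})$ by encoding every $(n+1)$-partite $(n+1)$-uniform hypergraph. The essential point is that this reformulation is manifestly symmetric in the $n+1$ variable positions: there is no longer a distinguished role for $x$, since the encoding condition merely asks for elements assigned to vertices of the partite hypergraph so that the edge relation matches $\varphi$ evaluated on $(n+1)$-tuples picking one element from each part. This symmetry, applied to the class of all $(n+1)$-partite $(n+1)$-uniform hypergraphs (which is closed under permuting the labels of its parts), immediately delivers the invariance of $\NIP_n$ under permutation of variables.

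Concretely, let $\pi$ be the permutation of $\{0,\ldots,n\}$ such that $(w,z_0,\ldots,z_{n-1})$ reorders $(x,y_0,\ldots,y_{n-1})$ according to $\pi$, and suppose $\varphi$ is $n$-dependent. If $\psi$ had $\IP_n$, then by Proposition \ref{encodes partite random graph}(1)$\Rightarrow$(2) applied to $\psi$, it would encode every $(n+1)$-partite $(n+1)$-uniform hypergraph $H=(V_0\sqcup\ldots\sqcup V_n, R)$, with the $i$-th part matched to the $i$-th variable of $\psi$. Given such an $H$, form the permuted hypergraph $H^{\pi}$ obtained by relabeling the parts via $\pi$, so that the $i$-th part of $H^{\pi}$ is $V_{\pi(i)}$ and the edge relation is permuted accordingly. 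Then $H^\pi$ is again an $(n+1)$-partite $(n+1)$-uniform hypergraph, and writing out $\psi(w,z_0,\ldots,z_{n-1})=\varphi(x,y_0,\ldots,y_{n-1})$ shows that an encoding of $H$ by $\psi$ is, on the same parameters, an encoding of $H^{\pi}$ by $\varphi$. Hence $\varphi$ would encode every $(n+1)$-partite $(n+1)$-uniform hypergraph, contradicting its $\NIP_n$ via Proposition \ref{encodes partite random graph}(2)$\Rightarrow$(1). The reverse implication is symmetric (replace $\pi$ by $\pi^{-1}$).

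The main obstacle is purely notational: carefully tracking which part of the hypergraph corresponds to which variable under the permutation $\pi$, and verifying that $R^{\pi}$ still defines a legitimate $(n+1)$-partite $(n+1)$-uniform hypergraph. There is no additional conceptual content beyond Proposition \ref{encodes partite random graph}, which has already absorbed the symmetric nature of $\IP_n$ by re-expressing it through partite hypergraph encodings that are permutation-invariant by construction.
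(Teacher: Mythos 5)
Your proposal is correct and takes essentially the same route as the paper: the paper derives the corollary in one line from Proposition \ref{encodes partite random graph} together with the observation that permuting the parts of a countable partite random hypergraph yields an isomorphic partite hypergraph, which is exactly the symmetry you exploit (you invoke item (2), the class of all $(n+1)$-partite hypergraphs being closed under relabeling parts, while the paper phrases it via item (3) and $G_{n+1,p}$). The two formulations are interchangeable, so there is nothing to add.
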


\subsection{Characterizations of $\NIP_n$ by collapsing indiscernibles}

Recall that $(G_n, <, R)$ is a countable ordered $n$-uniform random hypergraph, and $(G_{n,p},<,R,P_0,\ldots ,P_{n-1})$ is a countable ordered $n$-partite $n$-uniform random hypergraph. In this subsection we prove the following theorem. 

\begin{theorem}\label{thm: generalized Skow}\rm
The following are equivalent:
\begin{enumerate}
\item
$T$ is $n$-dependent,
\item
every $G_{n+1,p}$-indiscernible is actually $L_{\op}$-indiscernible,
\item
every $G_{n+1}$-indiscernible
is actually order indiscernible, i.e. $\{<\}$-indiscernible.
\end{enumerate}
\end{theorem}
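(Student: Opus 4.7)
I'll establish the cycle $(1)\Leftrightarrow(2)\Leftrightarrow(3)$, with Proposition~\ref{encodes partite random graph} as the engine for $(1)\Leftrightarrow(2)$, and the universality of $G_{n+1}$ among ordered $(n+1)$-uniform hypergraphs --- supplying an embedding $\iota:G_{n+1,p}|_{L_0}\hookrightarrow G_{n+1}$ of the $L_0$-reduct --- bridging $(2)$ and $(3)$. The direction $(2)\Rightarrow(1)$ is immediate: if $\varphi$ has $\IP_n$, then by Proposition~\ref{encodes partite random graph} there is a $G_{n+1,p}$-indiscernible $(a_g)$ encoding $R$ by $\varphi$, and this sequence visibly fails $L_{\op}$-indiscernibility since $\varphi$ distinguishes any two partite $(n+1)$-tuples of identical $L_{\op}$-qftp that differ in whether they form an $R$-edge.

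The core work is $(1)\Rightarrow(2)$. Starting from a $G_{n+1,p}$-indiscernible $(a_g)$ that fails $L_{\op}$-indiscernibility, with witnesses $\bar g_1,\bar g_2$ of identical $L_{\op}$-qftp distinguished by some formula $\chi$, I reduce in three steps. First, by the extension property of $G_{n+1,p}$, realize an edge-flipping chain of tuples interpolating $\bar g_1$ to $\bar g_2$ (each of the common $L_{\op}$-qftp, consecutive terms differing in a single $R$-edge) and pigeonhole on $\chi$'s value to obtain a consecutive pair still distinguished by $\chi$ but differing in exactly one partite $(n+1)$-edge $e$. Second, the two bases (positions outside $e$) have identical $L_{\opg}$-qftp, so $G_{n+1,p}$-indiscernibility provides a monster automorphism aligning them; after absorbing this common base into parameters, $\chi$ becomes a formula $\varphi(w;z_0,\ldots,z_n)$ that detects the $R$-edge on the $(n+1)$-tuple $\bar v=(v_0,\ldots,v_n)\in\prod_{l}P_l$. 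Third, by randomness of $G_{n+1,p}$, realize for each $s\subseteq\omega^n$ a point $b_s\in P_0$ and parameter sequences $(a_{l,k})_{k\in\omega}\subseteq P_{l+1}$ over the fixed base such that the partite edge $\{b_s,a_{0,k_0},\ldots,a_{n-1,k_{n-1}}\}$ is in $R$ iff $(k_0,\ldots,k_{n-1})\in s$ and all remaining $R$-relations among the chosen vertices match the base template. $G_{n+1,p}$-indiscernibility then transfers the $R$-pattern to $\varphi$, witnessing $\IP_n$ and contradicting $n$-dependence of $T$.

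For $(2)\Rightarrow(3)$: given $(c_h)_{h\in G_{n+1}}$ a $G_{n+1}$-indiscernible, the pullback $(c_{\iota(g)})_g$ is $G_{n+1,p}$-indiscernible since the $L_{\opg}$-qftp refines the $L_0$-qftp through $\iota$, hence $L_{\op}$-indiscernible by $(2)$. Any supposed failure of $\{<\}$-indiscernibility of $(c_h)$ reduces by the one-edge argument inside $G_{n+1}$ to a pair of strictly increasing $(n+1)$-tuples of distinct $L$-types --- one with an $R$-edge, one without --- and these realize as $\iota(\bar g_1),\iota(\bar g_2)$ for partite $(n+1)$-tuples $\bar g_i\in G_{n+1,p}$ (one vertex per ordered part), which share $L_{\op}$-qftp but after pullback have distinct $L$-types, contradicting $L_{\op}$-indiscernibility of the pullback. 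The remaining direction $(3)\Rightarrow(2)$ runs contrapositively: transfer a $G_{n+1,p}$-indiscernible failing $L_{\op}$-indiscernibility along $\iota$, extend to a sequence $(b_h)_{h\in G_{n+1}}$, and extract a $G_{n+1}$-indiscernible $(c_h)$ via Corollary~\ref{cor: random hypergraph indiscernibles exist}; the goal is to conclude that $(c_h)$ fails $\{<\}$-indiscernibility.

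The main obstacle lies in this last step. The based-on relation supplied by Corollary~\ref{cor: random hypergraph indiscernibles exist} may realize the $L$-type of $\iota(\bar g)$ via tuples $\bar h'$ lying outside the image $\iota(G_{n+1,p})$, so the partite encoding of $\varphi$ is not automatically transferred to $\bar b_{\bar h'}$; worse, the partition of $\iota^{-1}(\bar h')$ (when $\bar h'$ does fall in the image) may disagree with that of $\bar g$. Overcoming this requires choosing the extension of $(b_h)$ outside $\iota(G_{n+1,p})$ carefully --- for instance by selecting $\iota$ so its image is interleaved with $n+1$ dense ordered intervals of $G_{n+1}$ and populating the fillers with elements of the $(a_g)$-sequence matching those intervals --- so that the $\varphi$-value on any tuple of the relevant $\{<,R\}$-qftp is pinned down by the partite encoding. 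Once this is in place, the hypothetical $\{<\}$-indiscernibility of $(c_h)$ together with the based-on condition forces $\varphi$ not to distinguish edges from non-edges among the $b_h$'s on partite tuples, directly contradicting the encoding.
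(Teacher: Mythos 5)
Your treatment of $(2)\Rightarrow(1)$ and of $(1)\Rightarrow(2)$ matches the paper: the former is the paper's remark following the theorem (via Proposition \ref{encodes partite random graph}), and the latter is, in contrapositive form, exactly the paper's chain of Lemma \ref{nORG} (edge-flipping plus pigeonhole down to a single-edge difference over a base $V$), Lemma \ref{random subgraph} and Proposition \ref{prop:reduction to order indisc implies NIP}. The problem is the bridge between $(2)$ and $(3)$, where both of your directions have genuine gaps. For $(2)\Rightarrow(3)$: after the one-edge reduction inside $G_{n+1}$ you obtain configurations $W=\bar g V$ and $W'=\bar g' V$ whose images under $(c_h)$ have different types, but the $(n+1)$-tuples $\bar g,\bar g'$ are only distinguished \emph{over} the parameters $(c_h)_{h\in V}$; to contradict $L_{\op}$-indiscernibility of the pullback you would have to realize the whole of $W$ and $W'$ inside $\iota(G_{n+1,p})$ with matching $L_{\op}$-quantifier-free types, and this is in general impossible, since the image of $\iota$ carries only edges transversal to the parts while $W$ may contain edges forcing two of their vertices into the same part under any part-assignment. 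The pullback simply cannot see such configurations. For $(3)\Rightarrow(2)$ you correctly flag the obstruction yourself, but the proposed repair (interleaving $\iota$ with dense filler intervals populated from the $(a_g)$) does not resolve it: a tuple of $G_{n+1}$ realizing the prescribed $\{<,R\}$-quantifier-free type can still contain non-transversal edges, and the $\varphi$-values of the $b_h$'s on such tuples are not pinned down by the partite encoding, so the based-on condition yields no contradiction with order-indiscernibility of $(c_h)$.

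The paper avoids both problems by a different decomposition of the cycle. It proves $(1)\Rightarrow(3)$ directly, by running the same adjacency argument with $G_{n+1}$ in place of $G_{n+1,p}$ (this is why Lemmas \ref{nORG} and \ref{random subgraph} are stated uniformly for $(G_*,L_{o*},L_{g*})$), so no passage from $(2)$ to $(3)$ is ever needed. And it proves $(3)\Rightarrow(2)$ not by a vertex-wise embedding but by collapsing each column of $G_{n+1,p}$ to a single point: one forms the ordered $(n+1)$-uniform hypergraph $G^*_{n+1}$ on vertices $h_q=(g^0_q,\ldots,g^n_q)$ for $q\in\Q$, with $R(h_{q_0},\ldots,h_{q_n})$ read off from $R(g^0_{q_0},\ldots,g^n_{q_n})$ for $q_0<\cdots<q_n$, and sets $b_{h_q}=(a_{g^0_q},\ldots,a_{g^n_q})$. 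Every finite $\{<,R\}$-configuration of $G^*_{n+1}$ then corresponds to an honest partite configuration, so the failure of $L_{\op}$-indiscernibility survives the passage to a $G^*_{n+1}$-indiscernible based on $(b_h)$. Your argument is missing this device (or a working substitute), and as written the equivalence with $(3)$ is not established.
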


\begin{remark}\rm
\begin{enumerate}
\item
When $n=1$, this 
is due to Scow \cite{Scow:2011uq}.
\item
In the theorem, (2)$\Rightarrow$(1) follows immediately from Proposition \ref{encodes partite random graph}, since if $\varphi$ encodes $G_{n+1,p}$ by $(a_g)_{g\in G_{n+1,p}}$,
then $(a_g)_{g\in G_{n+1,p}}$ cannot be an $L_{\op}$-indiscernible.
\item
This characterization suggests that strongly minimal theories may be considered as ``$0$-dependent'' theories.
\end{enumerate}
\end{remark}
First we discuss (3)$\Rightarrow$(2).
Let $P_i$ be the $i$-th part of $G_{n+1,p}$.
Since $P_i$ is order isomorphic to $\Q$, we may assume $P_i=\{g_q^i: q\in \Q\}$ with $g_q<g_p \Leftrightarrow q<p$.
Let $G^*_{n+1}$ be an ordered $(n+1)$-uniform hypergraph defined as
\begin{itemize}
\item
$G^*_{n+1}=\{h_q: h_q = (g^0_q, \ldots ,g^{n}_q), q\in \Q\}$,
\item
$\{h_{q_0}, \ldots ,h_{q_n}\}\in R(G^*_{n+1})\Leftrightarrow R(g^0_{q_0}, \ldots ,g^n_{q_n})$ for $q_0< \ldots <q_n$,
\item
$h_q<h_p\Leftrightarrow q<p$
\end{itemize}
for every $q,p\in \Q$.
The hypergraph $G^*_{n+1}$ is clearly $K$-universal where $K$ is the class of all finite ordered $n$-uniform hypergraphs.
\begin{proof}[Proof of (3)$\Rightarrow$(2) of Theorem \ref{thm: generalized Skow}]
Let $(a_g)_{g\in G_{n+1,p}}$ be a $G_{n+1,p}$-indiscernible which is not $L_{\op}$-indiscernible. 
We construct a $G_{n+1}$-indiscernible which is not order indiscernible.
Assume that $G_{n+1}=\{g_q^i: i<n+1, q\in \Q\}$ as discussed above.
By the assumption there are $A \cong_{L_{\op}} B\subset G_{n+1,p}$ such that
$\tp((a_g)_{g\in A})\neq \tp((a_g)_{g\in B})$.
Without loss of generality, we may assume that if $g^i_q, g^j_p\in A$ and $i<j$ then $q<p$, and the same for $B$.
For $h_q\in G^*_{n+1}$, let $b_{h_q}=(a_{g^0_q}, \ldots ,a_{g^n_q})$ and consider $G^*_{n+1}$-indexed set $(b_{h})_{h\in G^*_{n+1}}$.
Let $A^*=\{h_q: g^i_q\in A\}$ and $B^*=\{h_q: g^i_q\in B\}$. Fix the tuple $y_q = (x_q^0, \ldots,y_q^{n-1})$ of variables for each element $h_q = (g^0_q,\ldots,g^{n-1}_q)\in A^*$,  and let $\varphi^*((y_q)_{h_q\in A^*}) = \varphi((x^i_q)_{g^i_q\in A})$ for each formula $\varphi((x^i_q)_{g^i_q\in A})$. Then for any formula $\varphi((x^i_q)_{g^i_q\in A})$, we have $\varphi^*((b_q)_{h_q\in A^*})\leftrightarrow \varphi^*((b_q)_{h_q\in X})$ 
whenever $A^*\cong_{<,R} X\subset G^*_{n+1}$ (and the same holds for $B^*$).\footnote{We thank Adri\'an Portillo for pointing out an inaccuracy in the previous version of the proof.}  
Applying Fact \ref{modeling property} to $G^*_{n+1}$, we have a $G^*_{n+1}$-indiscernible $(b'_h)_{h\in G^*_{n+1}}$ based on $(b_h)_{h\in G^*_{n+1}}$.
By the construction, $(b'_h)_{h\in G^*_{n+1}}$ is not order indiscernible indiscernible, because $\varphi^*((b_h)_{h\in A^*})\not\leftrightarrow \varphi^*((b_h)_{h\in B^*})$ for some $\varphi((x^i_q)_{g^i_q\in A})$ while $A^* \cong_< B^*$.
Finally, by compactness, we can find $(c_g)_{g\in G_{n+1}}$ that is $G_{n+1}$-indiscernible but not order indiscernible.
\end{proof}


\if0
First we prove (3)$\Rightarrow$(1) with an argument very similar to the proof of Proposition \ref{encodes partite random graph}.

For an ordered $n$-uniform hypergraph $(G, <, R)$, we will say that 
formulas $\varphi_R(x_0,\ldots ,x_{n-1})$ and $\varphi_<(x,y)$ encode $G$ if there is a $G$-indexed set 
$(a_g)_{g\in G}$ such that  for every $g_0,\ldots ,g_{n-1}\in G$,
\begin{itemize}
\item
$\models \varphi_R(a_{g_0},\ldots ,a_{g_{n-1}}) \Leftrightarrow R(g_0,\ldots ,g_{n-1})$,
\item
$\models \varphi_<(a_{g_0},a_{g_1}) \Leftrightarrow g_0<g_1$.
\end{itemize}

(In this case $|x|=|y|=|x_0|=\ldots =|x_{n-1}|$ are required. Moreover, since $R$ is symmetric, so is $\varphi$.)

The following is a generalization of \cite[Lemma 2.2]{MR1949710}.
\begin{lemma}\rm\label{lem: encoding hypergraphs}
Assume that $T$ has $\IP_{n}$.
Then for every ordered $n$-uniform hypergraph $G$ there are some $\varphi_R(z_{0},\ldots,z_{n})$ and $\varphi_<(z_0,z_1)$ encoding $G$.
\end{lemma}

\begin{proof}
By compactness it is enough to check it for finite hypergraphs $G$.  
As the theory is unstable, we can find $\varphi'_<(x,y)$ and $(c'_g)_{g\in G}$ in a way that $\varphi'_<(c'_{g_0},c'_{g_1})$ holds if and only if $g_0<g_1$.
Let
$\varphi\left(x,y_{0}\ldots y_{n-1}\right)$ be a formula with $\IP_{n}$.
So we have $\left(a_{0,g}\ldots a_{n-1,g}\right)_{g\in G}$ and
$\left(b_{s}\right)_{s\subseteq G^{n}}$ such that 
\[
\varphi\left(b_{s},a_{0,g_{0}}\ldots a_{n-1,g_{n-1}}\right)\Leftrightarrow \left(g_{0},\ldots,g_{n-1}\right)\in s\mbox{.}
\]
Now, for each $g\in G$, let $s_g$ be the set of all tuples $(g_{0},\ldots,g_{n-1})\in G^{n}$ with $g<g_0<\ldots g_{n-1}$ such that $R(g,g_0,\ldots,g_{n-1})$ holds, and let $c_{g}=(b_{s_{g}},a_{0,g},\ldots,a_{n-1,g})$. Moreover, set $\psi'\left(z'_0,\ldots ,z'_n\right)=\varphi(x_{0},y_{1,0},\ldots,y_{n,n-1})$
where $z'_i = x_{i}y_{i,0}\ldots y_{i,n-1}$.
Then for any $g_0<\ldots <g_n\in G$,
we have 
\begin{align*}
R(g_{0},\ldots,g_n) & \Leftrightarrow  (g_{1},\ldots,g_{n})\in s_{g_{0}} \\ & \Leftrightarrow\varphi(b_{s_{g_{0}}},a_{0,g_{1}},\ldots,a_{n-1,g_{n}}) \\ & \Leftrightarrow \psi'(c_{g_{0}},c_{g_{1}},\ldots,c_{g_{n}}). 
\end{align*}
Put $\psi(z_0,\ldots ,z_n)=\psi'(z_0',\ldots ,z_n')\wedge \bigwedge_{i<j<n+1}\varphi'_<(w_i,w_j)$
where $z_i=z_i'w_i$, and let $a'_{g_i}=c_{i}c'_{g_i}$.
By the construction, for any $g_0,\ldots ,g_{n}\in G$
$$
\psi(a'_{g_0},\ldots ,a'_{g_n}) \Leftrightarrow R(g_0,\ldots ,g_n)\wedge g_0<\ldots <g_n .
$$
Finally, set $\varphi_R(z_0,\ldots ,z_n) = \bigvee_{\sigma\in S_n} \varphi(z_{\sigma(0)},\ldots ,z_{\sigma(n)})$
where $S_n$ is the set of permutations on $n$. Thus $ \varphi_R(a'_{g_0},\ldots ,a'_{g_n})$ holds if and only if $R(g_0,\ldots ,g_n)$ holds for every $g_i\in G$.
As the formula $\varphi_<(z_0,z_1)$ is also obtained by adding dummy variables, i.e. $\varphi_<(z_0,z_1)=\varphi'_<(w_0,w_1)$, the sequence $(a'_g)_{g\in G}$ together with the formulas $\varphi_R$ and $\varphi_<$ encode $G$.
\end{proof}

\begin{proposition}\rm\label{prop: IP_n implies non-order-indiscernible}
Assume that $T$ has $\IP_{n}$. 
Then there is a $G_n$-indiscernible which is not order indiscernible.
\end{proposition}
\begin{proof}
By Lemma \ref{lem: encoding hypergraphs} we can find
a $G_{n+1}$-indexed set $(a_g)_{g\in G_{n+1}}$, formulas $\varphi_R$ and $\varphi_<$ such
that they encode $G_{n+1}$.
By Corollary \ref{cor: random hypergraph indiscernibles exist}
there is a $G_{n+1}$-indiscernible $\left(b_{g}\right)_{g\in G_{n+1}}$
based on $\left(a_{g}\right)_{g\in G_{n+1}}$. 
Now, as $\varphi_R(b_{g_0},\ldots ,b_{g_n})$ holds if and only if so does $R(g_0,\ldots ,g_n)$, the sequence
$(b_g)_{g\in G_{n+1}}$ cannot be $<$-indiscernible.
\end{proof}
\fi

Now we work towards the converse.
Although the remaining part is only (1)$\Rightarrow$(3), we see both (1)$\Rightarrow$(3) and (1)$\Rightarrow$(2) with the same method because a proposition proved in the second one is used in the next section. So let $\left( G_*, L_{o*}, L_{g*} \right)$ be either $\left(G_n, \{<\}, \{<,R\}\right)$ or $\left(G_{n,p}, L_{\op}, L_{\opg}\right)$.

Let $V\subset G_{*}$ be a finite set and $g_0,\ldots ,g_{n-1}, g'_0,\ldots ,g'_{n-1}\in G_*\setminus V$ such that $R(g_0,\ldots ,g_{n-1})\not\leftrightarrow R(g'_0,\ldots ,g'_{n-1})$.
Then $W=g_0\ldots g_{n-1}V$ is said to be \emph{$V$-adjacent} to $W'=g'_0\ldots g'_{n-1}V$ if
\begin{itemize}
\item
$W\cong_{L_{o*}} W'$,
\item
for every nonempty $\bar v\in V$  with $|\bar v|=k$ and $i_0,\ldots,i_{n-k-1}<n$
$$
R(g_{i_0},\ldots ,g_{i_{n-k-1}},\bar v)\leftrightarrow R(g'_{i_0},\ldots ,g'_{i_{n-k-1}},\bar v).
$$
\end{itemize}
Recall that $R$ is a symmetric relation, so we do not care about order permutations of substituted elements.
$W$ is said to be \emph{adjacent} to $W'$ if there is $V\subset W\cap W'$ such that $W$ is $V$-adjacent to $W'$.
Roughly speaking, $W$ is adjacent to $W'$ if $W$ can be made isomorphic to $W'$ by adding or deleting an edge.  
\begin{lemma}\label{nORG}\rm
Let $W, W'\subset G_{*}$ be subsets such that $W\cong_{L_{o*}} W'$.
Then there is a sequence $W=W_0, W_1,\ldots, W_k$ such that $W_{i+1}$ is adjacent to $W_i$ for every $i<k$ and $W_k\cong_{L_{g*}}W'$.
\end{lemma}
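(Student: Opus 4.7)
The plan is to induct on the number of ``edge discrepancies'' between $W$ and $W'$, realizing each single-edge flip as an adjacency inside $G_*$ via the Fra\"{i}ss\'e-theoretic extension property.

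Fix an $L_{o*}$-isomorphism $\sigma: W \to W'$, which exists by hypothesis. Call an $n$-subset $\{w_0,\ldots,w_{n-1}\} \subseteq W$ \emph{discrepant} if $R^{G_*}(w_0,\ldots,w_{n-1}) \not\leftrightarrow R^{G_*}(\sigma(w_0),\ldots,\sigma(w_{n-1}))$ (in the partite case only consider subsets with one vertex from each part, since otherwise both sides are automatically false). Let $k$ be the number of discrepant $n$-subsets. If $k=0$, then $\sigma$ is already an $L_{g*}$-isomorphism and the trivial sequence $W_0 = W$ suffices, so assume $k \geq 1$ and induct on $k$.

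Pick a discrepant $n$-subset $e = \{w_0, \ldots, w_{n-1}\} \subseteq W$ and set $V := W \setminus e$. The goal of one induction step is to produce $W_1 := V \cup \{w'_0, \ldots, w'_{n-1}\} \subseteq G_*$ that is $V$-adjacent to $W_0 := W$ and that has exactly $k-1$ discrepancies with $W'$. Concretely, first construct abstractly the finite $L_{g*}$-structure $A$ obtained by adjoining $n$ fresh points $w'_0, \ldots, w'_{n-1}$ to $V$, ordered and partitioned exactly like $w_0, \ldots, w_{n-1}$ relative to $V$, with every edge involving at least one vertex of $V$ copied from the edge structure of $W$, and with the single internal edge $R(w'_0, \ldots, w'_{n-1})$ declared to be the negation of $R(w_0, \ldots, w_{n-1})$. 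Since $A$ is a finite ordered (partite) $n$-uniform hypergraph extending $V$, the Fra\"{i}ss\'e property of $G_*$ (which is the Fra\"{i}ss\'e limit of the corresponding hereditary class, by the discussion in Section \ref{sec: Gen Indisc}) lets us extend the inclusion $V \hookrightarrow G_*$ to an embedding of $A$ into $G_*$, producing the required $w'_0, \ldots, w'_{n-1} \in G_* \setminus V$.

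By construction $W_1$ is then $V$-adjacent to $W_0$: the bijection fixing $V$ and sending $w_i \mapsto w'_i$ is an $L_{o*}$-isomorphism, every edge touching $V$ is preserved, and the single pure edge on $\{w'_0, \ldots, w'_{n-1}\}$ is flipped relative to $\{w_0,\ldots,w_{n-1}\}$. Composing $\sigma$ with this map gives an $L_{o*}$-isomorphism $\sigma_1: W_1 \to W'$ under which exactly the discrepancy at $e$ is resolved, while every other discrepancy is unchanged (mixed edges agree with those of $W$ by construction, and the other pure edges of $W$ are not touched). Hence $W_1$ has $k-1$ discrepancies relative to $W'$, and the inductive hypothesis applied to $(W_1, W')$ yields a chain $W_1, W_2, \ldots, W_k$ with $W_k \cong_{L_{g*}} W'$; prepending $W_0$ finishes the proof. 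The main technical obstacle is the extension step: one must check that the prescribed abstract structure $A$ lies in the Fra\"{i}ss\'e class whose limit is $G_*$. In the non-partite case $A$ is patently a finite ordered $n$-uniform hypergraph; in the partite case one must additionally verify that the part labels and the dense linear order on each part accommodate the new points at the correct positions, which is immediate from the axiomatization of $\Th(G_{n,p})$ recalled in Remark \ref{Theory of random graph}.
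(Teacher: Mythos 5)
Your proof is correct and follows essentially the same route as the paper's: both realize a single edge-flip as a $V$-adjacency by invoking the extension/randomness property of $G_*$ (Remark \ref{Theory of random graph}) and then iterate over the finitely many edge discrepancies. The only cosmetic differences are that the paper moves a single vertex $g_0\mapsto g_0'$ rather than replacing the whole transversal by fresh points, and that you spell out the induction on the number of discrepancies that the paper leaves implicit.
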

\begin{proof}
The proof is the same for both $\left(G_n, \{<\}, \{<,R\}\right)$ and $\left(G_{n,p}, L_{\op}, L_{\opg}\right)$. We only prove the statement for $\left( G_{p,n}, L_{\op}, L_{\opg}\right)$.
If $W\cong_{L_{\opg}} W'$ then there is nothing to show, so we may assume that $W\not\cong_{L_{\opg}} W'$.
Consider any $g_i\in W$ ($i<n$) such that $g_i \in P_i$, and let $V=W\setminus\{g_0,\ldots,g_{n-1}\}$.
By Remark \ref{Theory of random graph}, we can find $g'_0\in G_{n,p}$
such that $g_0g_1\ldots g_{n-1}V$ is $V$-adjacent to $g'_0g_1\ldots g_{n-1}V$.
This means that we can change the existence of any edge by moving a vertex,
and get the required sequence.
\end{proof}

\begin{lemma}\label{random subgraph}\rm
Let $V\subset G_{*}$ be a finite set and let $g_0<\ldots <g_{n-1} \in G_{*} \setminus V$ with $R(g_0,\ldots ,g_{n-1})$.
Then there are infinite sets $X_0<\ldots <X_{n-1}\subseteq G_{*}$ such that
\begin{itemize}
\item
$(G'; <, R) \cong (G_{n,p}; <, R)$ where $G'=X_0\ldots X_{n-1}$,
\item
for any $g'_i\in X_i$ $(i<n)$, either $W\cong_{L_{\opg}} W'$ or $W$ is $V$-adjacent to $W'$, where 
$W=g_0\ldots g_{n-1}V$ and $W'=g'_0\ldots g'_{n-1}V$.
\end{itemize}
\end{lemma}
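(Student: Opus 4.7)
The plan is to write down, in one shot, a partial type $\Pi$ over $V$ in countably many variables whose realization plants an entire copy of $G_{n,p}$ inside $G_*$ while matching the prescribed $V$-adjacencies, and then to realize $\Pi$ using the randomness axioms for $\Th(G_*)$ from Remark \ref{Theory of random graph}. Fix an abstract copy $H$ of $G_{n,p}$ on the index set $\bigsqcup_{i<n}\{(i,q):q\in\Q\}$, with parts $P^H_i=\{(i,q):q\in\Q\}$, lexicographic order, and random $n$-partite edge relation $R^H$. Introduce variables $\{x^i_q:i<n,\,q\in\Q\}$ and let $\Pi$ be the type over $V$ consisting of the clauses: \textbf{(a)} $x^i_q\in P_i$ (in the partite case), $x^i_q\neq v$ and $v<x^i_q\Leftrightarrow v<g_i$ for every $v\in V$, and $x^i_q<x^{i'}_{q'}\Leftrightarrow(i,q)<_{H}(i',q')$; \textbf{(b)} for each $n$-subset $T$ of $V\cup\{x^i_q\}$ meeting both pieces, writing $T\cap V=\bar v$ and $T\setminus V=\{x^{i_0}_{q_0},\ldots,x^{i_l}_{q_l}\}$, declare $R(T)\leftrightarrow R(g_{i_0},\ldots,g_{i_l},\bar v)$ if the $i_j$'s are pairwise distinct, and $\neg R(T)$ otherwise; \textbf{(c)} for every $n$-subset $T\subseteq\{x^i_q\}$, declare $R(T)\leftrightarrow R^H(T')$, where $T'$ is the tuple of corresponding indices.

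The core of the plan is the consistency of $\Pi$, which I would establish by compactness: for each finite $\Pi_0\subseteq\Pi$, list the finitely many variables appearing in $\Pi_0$ in the lex order of (a) and add them one at a time. At the step where $x^i_q$ is introduced, the constraints on it reduce to a position requirement (belonging to $P_i$, lying in the correct Dedekind cut over $V$, and above all previously added witnesses) together with finitely many prescriptions of the form ``$R(x^i_q,\bar a_j)\leftrightarrow\varepsilon_j$'' for $(n-1)$-tuples $\bar a_j$ built from $V$ and the previously added witnesses; this is exactly the situation handled by the randomness axiom of $\Th(G_*)$ in Remark \ref{Theory of random graph}. Since $G_*$ is $\omega$-saturated (being countable and $\omega$-categorical), the full $\Pi$ can then be realized inside $G_*$ by iteratively realizing the one-variable subtype of each $x^i_q$ over $V$ together with previously realized witnesses; set $X_i=\{x^i_q:q\in\Q\}$. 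Clauses (a) and (c) make the bijection $x^i_q\mapsto(i,q)$ into an $L_{\opg}$-isomorphism between $G'=X_0\cup\ldots\cup X_{n-1}$ (with parts $X_i$) and $H$, so in particular $(G';<,R)\cong(G_{n,p};<,R)$. For the second item, given any $g'_i\in X_i$, clauses (a) and (b) force every order/part relation and every mixed $R$-edge between $\{g'_0,\ldots,g'_{n-1}\}$ and $V$ to agree with the corresponding relation/edge for $\{g_0,\ldots,g_{n-1}\}$, leaving at most the single internal edge $R(g'_0,\ldots,g'_{n-1})$ free --- which is exactly the dichotomy $W\cong_{L_{\opg}}W'$ or $W$ is $V$-adjacent to $W'$.

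The main obstacle is the inductive step for the consistency of $\Pi_0$: a freshly introduced vertex is required to satisfy a rich simultaneous prescription of its $R$-edges to a finite but potentially large collection of tuples spread across $V$ and several already-built parts. The key observation is that $R$ is symmetric and only finitely many distinct tuples are constrained at each stage, so the prescriptions cannot conflict, and the randomness axiom absorbs the entire prescription together with the order requirement in one shot. A minor additional point, relevant only in the non-partite case, is that clause (a) must itself be satisfiable --- the $n$ blocks $X_0<\ldots<X_{n-1}$ must fit inside the Dedekind cuts carved out by the $g_i$'s over $V$ --- and this is ensured by the density of the DLO $(G_n,<)$, which allows any common cut of consecutive $g_i$'s to be further subdivided as required.
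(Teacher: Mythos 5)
Your proof is correct and follows essentially the same route as the paper: the paper builds $G'$ vertex-by-vertex by recursion, at each step using the randomness axiom to find a new vertex in the right part/cut over $V$ with prescribed mixed edges to $V$ and prescribed internal edges copying $G_{n,p}$, which is exactly your iterative realization of the one-variable subtypes of $\Pi$. Your type-theoretic packaging and the explicit discussion of the Dedekind-cut issue in the non-partite case are just more detailed bookkeeping around the same argument.
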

\begin{proof}
Again, the same argument works for both cases; we deal with partite graphs.
Let $g_0,g_1,\ldots $ be an enumeration of $G_{p,n}$.
We choose $G'=\{h_i\}_{i\in\omega}$ by recursion on $i$. First set $h_i=g_i$ for $i<n$. Suppose now that we have already obtained $h_0,\ldots, h_{m-1}$ for some $m\geq n$.
Since $G_{n,p}$ is random, we can find $h_m\in G_{n,p}$ such that $h_mV\cong_{L_{\op}} g_iV$ with $i$ taken in such a way that $g_m\in P_i$, that $g_0\ldots g_m \cong_{L_{\opg}} h_0\ldots h_m$, and 
for every nonempty $\bar v\in V$ 
$$
R(g_m, g_{i_0},\ldots ,g_{i_{k-1}},\bar v)\leftrightarrow R(h_m, h_{i_0},\ldots ,h_{i_{k-1}},\bar v).
$$
Finally, note that $X_i=P_i(G')$ satisfies the requirements. 
\end{proof}

Now we prove that the existence of a $G_{*}$-indiscernible which is not $L_{o*}$-indiscernible implies $\IP_n$.
We carefully discuss how to find a witness of $\IP_n$.
\begin{proposition}\label{prop:reduction to order indisc implies NIP}\rm
Suppose that there
is a $G_{n+1,p}$-indiscernible $(a_g)_{g\in G_{n+1,p}}$ that is not $L_{\op}$-indiscernible. 
Then there are a finite set $V\subset G_{n+1,p}$, an $L(A)$-formula $\varphi(x,y_0,\ldots ,y_{n-1},A)$ with $A=(a_g)_{g\in V}$ and a subgraph $G'\subset G_{n+1,p}$ with $G'\cong_{L_{\opg}}G_{n+1,p}$ such that 
$\varphi(x,y_0,\ldots ,y_{n-1})$ encodes $G'$ by $(a_g)_{g\in G'}$.
In particular, the formula $\varphi(x,y_0,\ldots ,y_{n-1},A)$ has $\IP_n$.
Moreover, for every $W,W'\subset G'$, we have $WV\cong_{L_{\op}} W'V$ whenever $W\cong_{L_{\op}} W'$.
\end{proposition}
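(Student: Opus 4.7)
The plan is to locate an ``atomic'' violation of $L_{\op}$-indiscernibility---a single pair of configurations $L_{\op}$-isomorphic but differing on exactly one $(n+1)$-ary edge---and then amplify it via Lemma \ref{random subgraph} into a faithful encoding of $G_{n+1,p}$ by a formula over finitely many parameters.

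First, since $(a_g)_{g \in G_{n+1,p}}$ is $L_{\opg}$-indiscernible but not $L_{\op}$-indiscernible, I fix finite $W_0, W'_0 \subset G_{n+1,p}$ with $W_0 \cong_{L_{\op}} W'_0$ but $\tp\bigl((a_g)_{g \in W_0}\bigr) \neq \tp\bigl((a_g)_{g \in W'_0}\bigr)$. Applying Lemma \ref{nORG} (with $G_* = G_{n+1,p}$), I obtain a chain $W_0, W_1, \ldots, W_k$ of successively adjacent subsets with $W_k \cong_{L_{\opg}} W'_0$. Because tuples indexed by $L_{\opg}$-isomorphic sets share a type, some consecutive pair $W_i, W_{i+1}$ already has distinct types; rename this pair $W, W'$. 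Then $W = g_0 \ldots g_n V$ and $W' = g'_0 \ldots g'_n V$ for some finite $V \subset W \cap W'$ and vertices $g_i, g'_i$ lying in the respective parts, with $R(g_0, \ldots, g_n)$ and $R(g'_0, \ldots, g'_n)$ differing while all edges involving $V$ agree. Up to swapping, $R(g_0, \ldots, g_n)$ holds and $R(g'_0, \ldots, g'_n)$ fails.

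Now set $A = (a_g)_{g \in V}$ and pick any $L(A)$-formula $\varphi(x, y_0, \ldots, y_{n-1}, A)$ separating the types of $(a_{g_0}, \ldots, a_{g_n})$ and $(a_{g'_0}, \ldots, a_{g'_n})$, with $\models \varphi(a_{g_0}, \ldots, a_{g_n}, A)$ and $\not\models \varphi(a_{g'_0}, \ldots, a_{g'_n}, A)$. Apply Lemma \ref{random subgraph} to the configuration $g_0 \ldots g_n$ over $V$: this produces infinite parts $X_0 < \ldots < X_n$ whose union $G'$ is $L_{\opg}$-isomorphic to $G_{n+1,p}$, such that for any $h_i \in X_i$, the tuple $h_0 \ldots h_n V$ is either $L_{\opg}$-isomorphic to $g_0 \ldots g_n V$ (when $R(h_0, \ldots, h_n)$ holds) or $V$-adjacent to it, i.e.\ $L_{\opg}$-isomorphic to $g'_0 \ldots g'_n V$ (when $R(h_0, \ldots, h_n)$ fails). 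Either way, $L_{\opg}$-indiscernibility of $(a_g)$ forces the truth value of $\varphi(a_{h_0}, \ldots, a_{h_n}, A)$ to match that of $R(h_0, \ldots, h_n)$. Thus $\varphi$ encodes $G' \cong_{L_{\opg}} G_{n+1,p}$ via $(a_h)_{h \in G'}$, and $\IP_n$ follows from Proposition \ref{encodes partite random graph}.

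For the moreover clause, the recursion in Lemma \ref{random subgraph} already selects each $h_m$ to realize the same $L_{\op}$-type over $V$ as some original $g_i$, so every element of $X_i$ has a common $L_{\op}$-type over $V$. Any $L_{\op}$-isomorphism between subsets $W, W' \subset G'$ therefore extends by the identity on $V$ to an $L_{\op}$-isomorphism $WV \to W'V$. I expect the main obstacle to be the first step---pinpointing a consecutive pair in the adjacency chain with distinct types---since one must rule out the possibility that the type difference accumulates ``invisibly'' across many $L_{\opg}$-preserving moves; this is exactly what $L_{\opg}$-indiscernibility together with a pigeonhole argument along the chain delivers.
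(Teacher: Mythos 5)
Your proposal is correct and follows essentially the same route as the paper: reduce to a $V$-adjacent pair via Lemma \ref{nORG} (you make explicit the pigeonhole along the adjacency chain that the paper's ``without loss'' conceals), then amplify with Lemma \ref{random subgraph} and use $G_{n+1,p}$-indiscernibility to read off the encoding. Your added detail on why $V$-adjacency to $W$ forces $L_{\opg}$-isomorphism to $W'$, and on the ``moreover'' clause, matches what the paper leaves implicit.
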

\begin{proof}
Since $(a_g)_{g\in G_{n+1,p}}$ is not $L_{\op}$-indiscernible, there are some subsets $W,W'\subset G$ with $W\cong_{L_{\op}} W'$
and an $L$-formula $\varphi(x_{0},\ldots ,x_{m-1})$ such that $\varphi((a_g)_{g\in W})\wedge\neg\varphi((a_{g'})_{g'\in W'})$ holds.
Without loss, we may assume by Lemma \ref{nORG} that
$W$ is $V$-adjacent to $W'$ for some subset $V$ such that $W=g_0g_1\ldots g_{n}V$, $W'=g'_0g'_1\ldots g'_{n}V$, and $R(g_0,\ldots ,g_n)\wedge\neg R(g'_0,\ldots ,g'_n)$.
Now, let  $G'\subset G_{n+1,p}$ be a subgraph obtained after applying Lemma \ref{random subgraph} to $V$ and $g_0\ldots g_{n}$.
Then for every $h_i\in P_i(G')$ $(i<n+1)$, 
we have $$R(h_0,\ldots ,h_{n}) \Leftrightarrow h_0\ldots h_{n}V\cong_{L_{\opg}} W$$
and
$$\neg R(h_0,\ldots ,h_{n}) \Leftrightarrow h_0\ldots h_{n}V\cong_{L_{\opg}} W'.$$
Since $(a_g)_{g\in G_{n+1,p}}$ is $G_{n+1,p}$-indiscernible, we have that
$\varphi(a_{h_0},\ldots ,a_{h_{n}}, A)$ holds if and only if $R(h_0,\ldots ,h_{n})$ holds, where $A=(a_g)_{g\in V}$.
Thus, the fact that the relation $R$ is random on $G'$ implies that $\varphi(x, y_0,\ldots ,y_{n-1},A)$ has $\IP_n$.
Finally, the moreover part is immediate from the definition of $G'$. 
\end{proof}

\begin{proposition}\rm
Suppose that there is a $G_{n+1}$-indiscernible which is not $<$-indiscernible.
Then $T$ has $\IP_n$.
\end{proposition}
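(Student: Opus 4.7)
The plan is to mirror Proposition \ref{prop:reduction to order indisc implies NIP}, exploiting the fact that Lemma \ref{nORG} and Lemma \ref{random subgraph} are stated uniformly for $(G_*,L_{o*},L_{g*})$ being either $(G_{n+1,p},L_{\op},L_{\opg})$ or $(G_{n+1},\{<\},\{<,R\})$. Let $(a_g)_{g\in G_{n+1}}$ be a $G_{n+1}$-indiscernible sequence which is not $\{<\}$-indiscernible, and fix an $L$-formula $\psi(x_0,\ldots,x_{m-1})$ together with subsets $W,W'\subseteq G_{n+1}$ such that $W\cong_{<}W'$ but $\psi\bigl((a_g)_{g\in W}\bigr)\wedge\neg\psi\bigl((a_g)_{g\in W'}\bigr)$ holds.

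First, I would apply Lemma \ref{nORG} to obtain a chain $W=W_0,W_1,\ldots,W_k$ in which consecutive terms are adjacent and $W_k\cong_{<,R}W'$. Since $\psi$ separates the endpoints, some pair $W_i,W_{i+1}$ along the chain is also separated by $\psi$, so after replacing $W,W'$ by this pair we may assume $W$ is $V$-adjacent to $W'$ for a finite common subset $V\subseteq W\cap W'$, with $W=g_0\cdots g_n V$ and $W'=g_0'\cdots g_n' V$. Swapping $W$ and $W'$ if necessary, I may also assume $R(g_0,\ldots,g_n)$ and $\neg R(g_0',\ldots,g_n')$. Next, I would invoke Lemma \ref{random subgraph} with this $V$ and the tuple $g_0<\ldots<g_n$ to produce infinite sets $X_0<\ldots<X_n\subseteq G_{n+1}$ whose union $G'$ carries the structure of $G_{n+1,p}$ as an ordered $(n+1)$-partite $(n+1)$-uniform random hypergraph, and such that for any choice $h_i\in X_i$, the set $h_0\cdots h_n V$ is $L_{\{<,R\}}$-isomorphic to $W$ if $R(h_0,\ldots,h_n)$ holds, and to $W'$ otherwise.

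Let $A=(a_g)_{g\in V}$ and let $\varphi(x,y_0,\ldots,y_{n-1},A)$ denote the $L(A)$-formula obtained from $\psi$ by placing $a_{g_0}$ in the position of the free variable $x$, placing $a_{g_1},\ldots,a_{g_n}$ in the positions of $y_0,\ldots,y_{n-1}$, and absorbing the entries indexed by $V$ as parameters from $A$. The $G_{n+1}$-indiscernibility of $(a_g)_{g\in G_{n+1}}$, combined with the property above, then yields $\models\varphi(a_{h_0},\ldots,a_{h_n},A)\Leftrightarrow R(h_0,\ldots,h_n)$ for all $h_i\in X_i$, so $\varphi(x,y_0,\ldots,y_{n-1},A)$ encodes the random partite hypergraph $G'\cong G_{n+1,p}$ via the indiscernible sequence $(a_g)_{g\in G'}$. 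By Proposition \ref{encodes partite random graph} the formula $\varphi(x,y_0,\ldots,y_{n-1},A)$ has $\IP_n$ in $T(A)$, and by preservation of $n$-dependence under naming parameters (item (1) of the opening remark of Section \ref{sec: IPn and gen indisc}) this implies that $T$ itself has $\IP_n$. The main subtlety is the reduction step: one must check that $V$-adjacency together with indiscernibility localizes the separating formula to a single $(n+1)$-hyperedge and that the resulting formula admits the required free-variable splitting $(x;y_0,\ldots,y_{n-1})$; the remaining argument is bookkeeping parallel to the partite case.
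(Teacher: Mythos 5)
Your proof is correct and takes essentially the same approach as the paper: the paper's own proof of this proposition is just the remark that the argument of Proposition \ref{prop:reduction to order indisc implies NIP} carries over verbatim, relying on the fact that Lemma \ref{nORG} and Lemma \ref{random subgraph} were stated uniformly for both $(G_{n+1},\{<\},\{<,R\})$ and $(G_{n+1,p},L_{\op},L_{\opg})$. Your write-up simply makes that adaptation explicit, including the correct reduction via the adjacency chain to a single edge-flip and the passage through $\IP_n$ over the parameters $A$.
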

\begin{proof}
The proof is the same as for Proposition \ref{prop:reduction to order indisc implies NIP}.
\end{proof}

\section{Reduction to 1 variable} \label{sec: reduction to 1 variable}
Recall that $(G_{n,p};<,R,P_0,\ldots ,P_{n-1})$ is a countable ordered $n$-partite $n$-uniform random hypergraph.

A standard characterization of dependence of a formula in terms of finite alternation on an infinite indiscernible sequence (see e.g. \cite[Proposition 4]{adler2008introduction}) can be easily reformulated using Ramsey and compactness in the following way:

\begin{remark}\rm
Let $R$ be a dense co-dense subset of $\mathbb{Q}$. The following
are equivalent:
\begin{enumerate}
\item $\varphi(x,y)$ has $\IP$.
\item There are $b$ and $(a_{i})_{i\in\mathbb{Q}}$ such that:
\begin{enumerate}
\item $(a_{i})_{i\in\mathbb{Q}}$ is order indiscernible,
\item $\models\varphi(b,a_{i})$ if and only if $i\in R$, for all $i\in\mathbb{Q}$,
\item in addition, $(a_{i})_{i\in\mathbb{Q}}$ is $(<,R)$-indiscernible
over $b$.
\end{enumerate}
\end{enumerate}
\end{remark}

We give an appropriate generalization for $n$-dependence (recall that an ordered random $1$-hypergraph is just a dense linear order with a dense co-dense subset).

\begin{lemma}\label{lem: A characterization of IP_n}\rm
The following are equivalent. 
\begin{enumerate}
\item $\varphi(x,y_{0},\ldots ,y_{n-1})$ has $IP_{n}$.
\item There are $b$ and $(a_{g})_{g\in G_{n}}$ such that:
\begin{enumerate}
\item $(a_{g})_{g\in G_{n,p}}$ is $L_{\op}$-indiscernible,
\item $\models\varphi(b,a_{g_{0}},\ldots ,a_{g_{n-1}})$ if and only if $R(g_{0},\ldots ,g_{n-1})$,
for all $g_{i}\in P_{i}$. 
\end{enumerate}
\item
There are $b$ and $(a_g)_{g\in G_{n,p}}$ satisfying the conditions (2a), (2b) and
\begin{itemize}
\item[(c)] $(a_{g})_{g\in G_{n,p}}$ is $G_{n,p}$-indiscernible over $b$.
\end{itemize}
\end{enumerate}
\end{lemma}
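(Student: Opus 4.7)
I plan to prove the cycle $(1) \Rightarrow (3) \Rightarrow (2) \Rightarrow (1)$. The implication $(3) \Rightarrow (2)$ is immediate, since $L_{\op} \subset L_{\opg}$ makes $L_{\op}$-indiscernibility a weakening of $G_{n,p}$-indiscernibility (and dropping the parameter $b$ weakens further). The substance is in the other two directions.

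For $(1) \Rightarrow (3)$, I start from the $G_{n+1,p}$-indiscernible sequence $(\tilde a_g)_{g \in G_{n+1,p}}$ encoding $G_{n+1,p}$ via $\varphi$ supplied by Proposition \ref{encodes partite random graph}. I fix some $h \in P_0(G_{n+1,p})$, set $b := \tilde a_h$, restrict to $H := P_1(G_{n+1,p}) \sqcup \ldots \sqcup P_n(G_{n+1,p})$, and equip $H$ with the induced $n$-ary relation $R^*(g_1,\ldots,g_n) :\Leftrightarrow R^{G_{n+1,p}}(h,g_1,\ldots,g_n)$. The encoding clause (2b) is then immediate from the original encoding with $h$ in the first slot. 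To obtain (2a) and (c) I need two verifications. First, $(H;<,R^*,P_1,\ldots,P_n) \cong G_{n,p}$: by $\omega$-categoricity (Remark \ref{Theory of random graph}) this reduces to checking the partite extension axiom for $\Th(G_{n,p})$, which follows from the corresponding axiom in $G_{n+1,p}$ by prepending $h$ to each parameter tuple and using the symmetry of $R$. Second, $(\tilde a_g)_{g \in H}$ is $G_{n,p}$-indiscernible over $b$: given $\bar g, \bar g' \subset H$ with the same $\qftp_{L_{\opg}}$ as interpreted in $H$, the expanded tuples $(h,\bar g)$ and $(h,\bar g')$ have the same $\qftp_{L_{\opg}}$ in $G_{n+1,p}$ (every $(n+1)$-hyperedge meets $P_0$ in $h$, so all incidences with $h$ are encoded by $R^*$), and $G_{n+1,p}$-indiscernibility of $(\tilde a_g)$ then yields the desired equality of $L$-types over $b$.

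For $(2) \Rightarrow (1)$ I set $a_{j,i} := a_{g_{j,i}}$ where $g_{j,0} < g_{j,1} < \ldots$ enumerates $P_j(G_{n,p})$ in order. Given $s \subseteq \omega^n$, by compactness it suffices, for each finite $F \subseteq \omega^n$, to exhibit some $b_{s,F}$ realizing $\varphi(x,a_{0,i_0},\ldots,a_{n-1,i_{n-1}})$ precisely on $(i_0,\ldots,i_{n-1}) \in F \cap s$. Using the universality of $G_{n,p}$, I embed the finite ordered partite hypergraph on vertex set $\bigsqcup_j \pi_j(F)$ whose edge set is $F \cap s$ back into $G_{n,p}$, obtaining indices $g'_{j,i}$ with the same $L_{\op}$-type as the $g_{j,i}$ but whose $R$-pattern matches $F \cap s$. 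The $L_{\op}$-indiscernibility of $(a_g)_{g \in G_{n,p}}$ then says that the tuples $(a_{g_{j,i}})$ and $(a_{g'_{j,i}})$ have equal $L$-type, so a monster automorphism carries the latter to the former, and the image of $b$ under this automorphism is the required $b_{s,F}$. Compactness closes out the full $\IP_n$-witness.

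The only mildly delicate step is the extension-axiom verification in $(1) \Rightarrow (3)$, where $h$ must be absorbed uniformly into each parameter side of the $G_{n+1,p}$ extension axiom to produce the partite extension axiom for $H$; once this bookkeeping is in place, the remainder is mechanical.
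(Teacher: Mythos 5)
Your argument is correct, but your key direction $(1)\Rightarrow(3)$ takes a genuinely different route from the paper's. The paper starts from an $L_{\op}$-indiscernible witness of $\IP_n$ indexed by $O_{n,p}$ (Remark \ref{indiscernible witness}), chooses $b$ realizing the full edge/non-edge pattern, and only then extracts a $G_{n,p}$-indiscernible sequence based on that witness over $b$; the work there is in checking that a sequence based on an $L_{\op}$-indiscernible sequence is again $L_{\op}$-indiscernible, so that (2a) survives the second extraction. You instead take the $G_{n+1,p}$-indiscernible encoding of $G_{n+1,p}$ supplied by Proposition \ref{encodes partite random graph} and slice off the first part, setting $b=\tilde a_h$ for a fixed $h\in P_0$ and inducing $R^*(\bar g)\leftrightarrow R(h,\bar g)$ on $H=P_1\sqcup\ldots\sqcup P_n$. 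This buys all three conditions from a single indiscernible sequence and avoids the two-stage extraction; the price is the extension-axiom bookkeeping showing $(H,<,R^*,\bar P)\cong G_{n,p}$, which you handle correctly. One point you must make explicit: as written, your ``second verification'' (prepending $h$ to matching $\qftp_{L_{\opg}}$-tuples) only delivers (c). Condition (2a) requires that two tuples in $H$ with the same $L_{\op}$-type but possibly \emph{different} $R^*$-patterns have the same $L$-type over $\emptyset$; this holds precisely because a tuple contained in $H$ meets no $(n+1)$-hyperedge of $G_{n+1,p}$ at all (every hyperedge needs a $P_0$-vertex), so its $L_{\op}$-type already determines its $\qftp_{L_{\opg}}$ in $G_{n+1,p}$. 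Your parenthetical contains this observation, but it should be stated as the proof of (2a), since this is exactly where one would otherwise worry. Your $(2)\Rightarrow(1)$ is the paper's argument in automorphism form (the paper transports the pattern $X_0X_1$ to an $L_{\op}$-isomorphic copy lying inside $R$ and its complement and quotes indiscernibility on the existential formula, rather than moving $b$). Finally, a small slip: $P_j(G_{n,p})$ is densely ordered, so it admits no order-preserving enumeration by $\omega$; you only need to choose an increasing $\omega$-sequence $g_{j,0}<g_{j,1}<\ldots$ inside $P_j$, which suffices for the compactness argument.
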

\begin{proof}
(3) $\Rightarrow$ (2): Trivial.

(2) $\Rightarrow$ (1): We show that $(a_{g})_{g\in G_{n,p}}$
witnesses the $n$-independence of $\varphi$. Let $X_{0}$ and $X_{1}$
be any disjoint finite subsets of $P_{0}\times\ldots \times P_{n-1}$.
As $G_{n,p}$ is random, there are subsets $X_{0}'X_{1}'\cong_{L_{\op}}X_{0}X_{1}$
of $P_{0}\times\ldots \times P_{n-1}$ such that 
$X_{0}'\subset R$ and
$X_{1}'\subset R^{c}=\Pi_i P_i \setminus R$.
By the assumption, observe that $\varphi(b,a_{g_{0}},\ldots ,a_{g_{n-1}})$ holds only when $(g_{0},\ldots ,g_{n-1})\in X_{0}'$, and hence
\[
\models\exists x\bigwedge_{i<2}\bigwedge_{(g_{0},\ldots ,g_{n-1})\in X_{i}}\varphi(x,a_{g_{0}},\ldots ,a_{g_{n-1}})^{{\rm {if}(i=0)}}
\]
since $(a_{g})_{g\in G_{n}}$ is $L_{\op}$-indiscernible.

(1) $\Rightarrow$ (3):
Suppose that $\varphi(x,y_0,\ldots ,y_{n-1})$ has $\IP_n$.
By Remark \ref{indiscernible witness}, we may assume the existence of an $L_{\op}$-indiscernible $(a'_g)_{g\in G_{n,p}}$ witnessing that $\varphi$ has IP$_n$.
Then there is some $b$ such that $\models\varphi(b,a'_{g_{0}},\ldots ,a'_{g_{n-1}})$ if and only if $R(g_{0},\ldots ,g_{n-1})$, for all $g_{i}\in P_{i}$. 
Let $(a_g)_{g\in G_{n,p}}$ be a $G_{n,p}$-indiscernible based on $(a'_g)_{g\in G_{n,p}}$ over $b$. Clearly, any such sequence satisfies conditions (b) and (c). 
To see that $(a_g)_{g\in G_{n,p}}$ is an $L_{\op}$-indiscernible sequence, consider some finite subsets $W$ and $V$ of $G_{n,p}$ with $W\cong_{L_{\op}} V$, and a formula $\theta((x_g)_{g\in W})$. 
We show that $\theta((a_g)_{g\in W})$ holds if and only if $\theta((a_g)_{g\in V})$ holds.
Since $(a_g)_{g\in G_{n,p}}$ is based on $(a'_g)_{g\in G_{n,p}}$, there is $W'V'\cong_{L_{\opg}} WV$ such that
$$\theta((a_g)_{g\in W})\leftrightarrow \theta((a'_g)_{g\in W'})\ \mbox{ and } \ \theta((a_g)_{g\in V})\leftrightarrow \theta((a'_g)_{g\in V'})$$ hold.
Now, the fact that $W'\cong_{L_{\op}}V'$ yields that $\theta((a'_g)_{g\in W'})$ holds if and only if $\theta((a'_g)_{g\in V'})$ holds, as desired.
\end{proof}

\begin{proposition}\label{lem: char of NIP_n by preserving indisc}\rm
The following are
equivalent:
\begin{enumerate}
\item Every $L$-formula $\phi\left(x,y_{0},\ldots,y_{n-1}\right)$ with $\left|x\right| \leq m$
is $n$-dependent.
\item For any $\left(a_{g}\right)_{g\in G_{n,p}}$ and $b$ with $\left|b\right| = m$,
if $\left(a_{g}\right)_{g\in G_{n,p}}$ is $G_{n,p}$-indiscernible over
$b$ and $L_{\op}$-indiscernible (over $\emptyset$), then it is $L_{\op}$-indiscernible over $b$.
\end{enumerate}
\end{proposition}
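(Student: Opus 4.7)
The plan is to treat the two directions separately. The direction $(2) \Rightarrow (1)$ is the easy one: it amounts to showing that if some $\varphi(x;y_0,\ldots, y_{n-1})$ with $|x|\leq m$ has $\IP_n$, then the indiscernibility-preservation condition (2) fails. I would pad $x$ with dummy coordinates (which preserves $\IP_n$) so that $|x|=m$, and then invoke Lemma \ref{lem: A characterization of IP_n} to produce $b$ of length $m$ and a sequence $(a_g)_{g\in G_{n,p}}$ that is simultaneously $L_{\op}$-indiscernible, $G_{n,p}$-indiscernible over $b$, and satisfies $\varphi(b;a_{g_0},\ldots,a_{g_{n-1}}) \leftrightarrow R(g_0,\ldots,g_{n-1})$ for all $g_i\in P_i$. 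Choosing $g_i, g'_i \in P_i$ in $G_{n,p}$ with $R(g_0,\ldots,g_{n-1}) \wedge \neg R(g'_0,\ldots,g'_{n-1})$, together with the obvious $L_{\op}$-isomorphism between the two tuples, immediately contradicts $L_{\op}$-indiscernibility over $b$ and hence (2).

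For the converse $(1) \Rightarrow (2)$, I would also argue by contrapositive, closely mimicking Proposition \ref{prop:reduction to order indisc implies NIP} while paying attention to the length of the free variable. Starting from $(a_g)_{g\in G_{n,p}}$ and $b$ with $|b|=m$ violating (2), pick finite $W \cong_{L_{\op}} V$ and an $L$-formula $\theta((x_g)_{g\in W},z)$ with $\theta(a_W,b)\wedge\neg\theta(a_V,b)$. The $G_{n,p}$-indiscernibility of $(a_g)$ over $b$ forces $W \not\cong_{L_{\opg}} V$. Applying Lemma \ref{nORG} gives a chain $W = W_0, W_1, \ldots, W_k$ of consecutively adjacent sets with $W_k \cong_{L_{\opg}} V$; the $G_{n,p}$-indiscernibility over $b$ yields $\neg\theta(a_{W_k},b)$, so the truth value of $\theta$ flips at some adjacent pair $(W_i, W_{i+1})$, which must then be $L_{\opg}$-non-isomorphic. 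Replacing $W, V$ by this pair, I may assume $W$ and $V$ are $U$-adjacent for a common set $U$: $W = g_0 \cdots g_{n-1} U$ and $V = g'_0 \cdots g'_{n-1} U$ with exactly one flipped main edge $R(g_0,\ldots,g_{n-1}) \wedge \neg R(g'_0,\ldots,g'_{n-1})$.

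I would then apply Lemma \ref{random subgraph} to $U$ and $g_0,\ldots,g_{n-1}$ to obtain $X_i \subseteq P_i$ with $G' := X_0 \sqcup \cdots \sqcup X_{n-1} \cong (G_{n,p}; <, R)$ as ordered partite hypergraphs, and such that for any $h_i \in X_i$ the set $h_0 \cdots h_{n-1} U$ is $L_{\opg}$-isomorphic to $W$ (resp.\ $V$) precisely when $R(h_0,\ldots,h_{n-1})$ holds (resp.\ fails). Setting $\varphi(z; y_0, \ldots, y_{n-1}) := \theta(y_0, \ldots, y_{n-1}, a_U, z)$, which is an $L(a_U)$-formula with $|z| = m$, and using $G_{n,p}$-indiscernibility of $(a_g)$ over $b$, I would deduce $\varphi(b; a_{h_0}, \ldots, a_{h_{n-1}}) \leftrightarrow R(h_0, \ldots, h_{n-1})$ for all $h_i \in X_i$; together with the fact that $(a_h)_{h\in G'}$ is $L_{\op}$-indiscernible by restriction of the global indiscernibility, Lemma \ref{lem: A characterization of IP_n} yields $\IP_n$ for $\varphi$ over the parameter set $a_U$. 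Absorbing $a_U$ into the $y_i$'s via the standard device recalled in the opening remarks of Section \ref{sec: IPn and gen indisc} (which leaves the length of the free variable $z$ unchanged) produces a formula in $T$ with free variable of length $m$ and $\IP_n$, contradicting (1).

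The main obstacle is the chain-and-localization step based on Lemma \ref{nORG}: one must convert a global failure of $L_{\op}$-indiscernibility over $b$ into a flip of a single hyperedge between two $L_{\op}$-isomorphic sets, so that Lemma \ref{random subgraph} can amplify this single-edge discrepancy into a full $G_{n,p}$-pattern of $\varphi$-types. Once this localization is in place, the rest is routine tracking of variable lengths; the crucial point is that $b$ and the new parameters $a_U$ end up on opposite sides of the formula, so the parameter-absorption recipe preserves $|x| = m$.
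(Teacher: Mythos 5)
Your proposal is correct and follows essentially the same route as the paper: for $(1)\Rightarrow(2)$ the paper applies Proposition \ref{prop:reduction to order indisc implies NIP} as a black box to the concatenated sequence $(ba_g)_{g\in G_{n,p}}$ and then redistributes the variables so that the $b$-slot becomes the free variable, whereas you inline the proof of that proposition (via Lemmas \ref{nORG} and \ref{random subgraph}) while keeping $b$ as a separate slot of $\theta$ throughout — the same argument in slightly different packaging, with $(2)\Rightarrow(1)$ identical in both. The one point worth making explicit in your write-up is that the final appeal to Lemma \ref{lem: A characterization of IP_n} needs $L_{\op}$-indiscernibility of the concatenated tuples $(a_h a_U)_{h\in G'}$ rather than just of $(a_h)_{h\in G'}$, which is exactly what the ``moreover'' clause of Proposition \ref{prop:reduction to order indisc implies NIP} (equivalently, the construction in Lemma \ref{random subgraph}) supplies.
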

\begin{proof}
(1) $\Rightarrow$ (2): Let $\left(a_{g}\right)_{g\in G_{n,p}}$ be $G_{n,p}$-indiscernible over $b$ with $\left|b\right|= m$, and set $a_{g}'=ba_{g}$. Thus $\left(a_{g}'\right)_{g\in G_{n,p}}$ is $G_{n,p}$-indiscernible (over $\emptyset$). Suppose, towards a contradiction, that $\left(a_{g}\right)_{g\in G_{n,p}}$ is not $L_{\op}$-indiscernible over $b$; in other words
$\left(a_{g}'\right)_{g\in G_{n,p}}$ is not $L_{\op}$-indiscernible (over $\emptyset$).
By Proposition \ref{prop:reduction to order indisc implies NIP},
there is a subgraph $G'\subset G_{n,p}$, a finite set $V\subset G_{n,p}$, and
a formula $\psi(y'_0,\ldots ,y'_{n-1},\bar z')$ 
such that 
\begin{enumerate}
\item
$G'\cong_{L_{\opg}} G_{n,p}$,
\item
$R(g_0,\ldots ,g_{n-1})$ holds if and only if $\psi(a'_{g_0},\ldots ,a'_{g_{n-1}},(a'_g)_{g\in V})$ for every $g_i\in P_i(G')$,
\item
for every $W, W'\subset G'$ we have $WV\cong_{L_{\op}}W'V$ whenever  $W\cong_{L_{\op}}W'.$
\end{enumerate}
Now, observe that each variable $y_i'$ is of the form $xy_i$ where $x$ corresponds to the tuple $b$, and the variable $\bar z'$ is of the form $x\bar z$. Let $\varphi(x,z_0,\ldots z_{n-1})$ be the formula $\psi(y'_0,\ldots ,y'_{n-1},\bar z')$ where $z_i=y_i\bar z$. Moreover, for each $g\in G'$, let $c_g=a_g(a_{g'})_{g' \in V}$ and note that $\varphi(b,c_{g_0},\ldots ,c_{g_{n-1}})$ holds if and only if $R(g_0,\ldots ,g_{n-1})$ does. As the sequence $(a_g)_{g\in G_{n,p}}$ is $L_{\op}$-indiscernible, so is $(c_g)_{g\in G'}$, and therefore the formula $\varphi(x,y_0,\ldots y_{n-1})$ has $\IP_n$ by Lemma \ref{lem: A characterization of IP_n}.

(2) $\Rightarrow$ (1): 
Immediate from Lemma \ref{lem: A characterization of IP_n}.
\end{proof}

The following theorem is from \cite[Section 2]{Shelah:vn}. We are following the same strategy as the proof there, however the authors felt that a more detailed account could be provided.
\begin{theorem}[Shelah] \label{thm: reduction to one var}\rm
$T$ is $n$-dependent if and only if every $L$-formula
$\varphi(x,y_{0},\ldots ,y_{n-1})$ with $|x|=1$ is $n$-dependent.
\end{theorem}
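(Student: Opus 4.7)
The plan is to reduce everything to Proposition~\ref{lem: char of NIP_n by preserving indisc}. The forward direction is trivial: if $T$ is $n$-dependent, then by definition every formula is $n$-dependent, including those with $|x|=1$. For the converse, our hypothesis is condition (1) of Proposition~\ref{lem: char of NIP_n by preserving indisc} with $m=1$, which is equivalent to condition (2) with $m=1$. It suffices to prove condition (2) for every $m\geq 1$, for then Proposition~\ref{lem: char of NIP_n by preserving indisc} yields that every formula with $|x|\leq m$ is $n$-dependent, for all $m$, and so every formula is $n$-dependent.

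I proceed by induction on $m$, the base case $m=1$ being the hypothesis. For the inductive step, assume the preservation statement holds for all parameters of length $m-1\geq 1$. Let $(a_g)_{g\in G_{n,p}}$ be $G_{n,p}$-indiscernible over $b$ with $|b|=m$ and $L_{\op}$-indiscernible over $\emptyset$. Decompose $b = c b^{*}$ with $|c|=1$ and $|b^{*}|=m-1$. Restricting the base set, $(a_g)_{g\in G_{n,p}}$ is certainly still $G_{n,p}$-indiscernible over $b^{*}$ and $L_{\op}$-indiscernible over $\emptyset$, so by the inductive hypothesis applied to $b^{*}$ we conclude that $(a_g)_{g\in G_{n,p}}$ is $L_{\op}$-indiscernible over $b^{*}$.

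The key step is now to absorb $b^{*}$ into the indexed sequence. Set $a_g' := a_g b^{*}$. Because $b^{*}$ is a fixed (not $g$-indexed) parameter, the fact that $(a_g)_{g\in G_{n,p}}$ is $L_{\op}$-indiscernible over $b^{*}$ translates directly into $(a_g')_{g\in G_{n,p}}$ being $L_{\op}$-indiscernible over $\emptyset$; similarly, $(a_g)_{g\in G_{n,p}}$ being $G_{n,p}$-indiscernible over $b=cb^{*}$ is the same as $(a_g')_{g\in G_{n,p}}$ being $G_{n,p}$-indiscernible over $c$. Applying the base case $m=1$ to the pair $((a_g')_{g\in G_{n,p}},c)$, we obtain that $(a_g')_{g\in G_{n,p}}$ is $L_{\op}$-indiscernible over $c$. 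Translating back, $(a_g)_{g\in G_{n,p}}$ is $L_{\op}$-indiscernible over $cb^{*}=b$, which is precisely condition (2) for $m$.

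The proof is not hard once Proposition~\ref{lem: char of NIP_n by preserving indisc} is in hand; the only subtle point is the bookkeeping in the ``absorption'' step, namely that inserting a fixed tuple $b^{*}$ into each $a_g$ interchanges indiscernibility over $b^{*}$ (for the original sequence) with indiscernibility over $\emptyset$ (for the new sequence), and likewise converts indiscernibility over $cb^{*}$ into indiscernibility over $c$. This is where the real content lies, and it is what allows a one-variable-at-a-time induction to replace a simultaneous argument over all tuple lengths.
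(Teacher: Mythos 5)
Your proof is correct and is essentially the paper's own argument: both reduce the theorem to Proposition \ref{lem: char of NIP_n by preserving indisc} and then induct on the length of the parameter tuple, with the key step being the absorption of a fixed subtuple of $b$ into the indexed sequence $(a_g)_{g\in G_{n,p}}$. The only (immaterial) difference is the order of operations --- the paper first applies the length-one case to the last coordinate and then absorbs that single coordinate before invoking the inductive hypothesis on the remaining length-$m$ tuple, whereas you first invoke the inductive hypothesis on $b^*$ and then absorb $b^*$ before applying the base case to the remaining single coordinate $c$.
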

\begin{proof}
We show by induction that if the condition (2) of Proposition \ref{lem: char of NIP_n by preserving indisc}
holds for $m=1$, then it holds for all $m\in\omega$. 
Let $\bar{b}=b_{0}\ldots b_{m}$ be a tuple with $|b_i|=1$, and let $\left(a_{g}\right)_{g\in G_{n,p}}$ be given such that $\left(a_{g}\right)_{g\in G_{n,p}}$ is $G_{n,p}$-indiscernible over $\bar{b}$.
Note that $\left(a_{g}\right)_{g\in G_{n,p}}$ is $L_{\op}$-indiscernible over $b_{m}$, as otherwise 
it is not $L_{\op}$-indiscernible over $b_m$ but $G_{n,p}$-indiscernible over $b_m$, contradicting the inductive assumption. 
Now, consider the sequence $\left(b_{m}a_{g}\right)_{g\in G_{n,p}}$ and notice that it is clearly $G_{n,p}$-indiscernible over $b_{0}\ldots b_{m-1}$. Applying
the inductive assumption again, we conclude that $\left(b_{m}a_{g}\right)_{g\in G_{n,p}}$
is $L_{\op}$-indiscernible over $b_{0}\ldots b_{m-1}$,
which implies that $\left(a_{g}\right)_{g\in G_{n}}$ is 
$L_{\op}$-indiscernible over $b_{0}\ldots b_{m}$, as desired.
\end{proof}

Finally, we summarize the basic properties of $n$-dependent theories established throughout the paper, giving a criterion for $n$-dependence of a theory.

\begin{proposition} \label{prop: criterion for n-dependence}\rm
\begin{enumerate}
\item Boolean combinations preserve $n$-dependence (Corollary \ref{boolean combinations}).
\item Permuting variables preserves $n$-dependence (Corollary \ref{cor: permuting vars}).
\item Failure of $n$-dependence of a theory is witnessed by a formula in a single free variable (Theorem \ref{thm: reduction to one var}).
\item If $T$ eliminates quantifiers, then in order to check that $T$ is $n$-dependent it is enough to check that every atomic formula in a single free variable is $n$-dependent, e.g. by checking that the number of $\phi$-types is not maximal (combining (1) and (3) above). 
\end{enumerate}
\end{proposition}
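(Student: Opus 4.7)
The plan is to observe that items (1), (2), and (3) are direct restatements of the results cited in the parentheses, so no further argument is required for them. All the work lies in (4), which is a straightforward assembly of the preceding items, and I would present it as the main body of the proof.

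For (4), assume that $T$ eliminates quantifiers and that every atomic formula in a single free variable is $n$-dependent. By item (3), in order to verify that $T$ is $n$-dependent it suffices to show that every $L$-formula $\varphi(x, y_0, \ldots, y_{n-1})$ with $|x|=1$ is $n$-dependent. By quantifier elimination, $\varphi(x, y_0, \ldots, y_{n-1})$ is equivalent modulo $T$ to a Boolean combination of atomic formulas, each using some sub-tuple of the variables $x, y_0, \ldots, y_{n-1}$. By item (1), it is then enough to verify that each such atomic formula $\psi$, viewed as a formula in the variable partition $(x; y_0, \ldots, y_{n-1})$ (possibly after padding with dummy variables, which preserves $n$-dependence by the remark just before Proposition \ref{encodes partite random graph}), is $n$-dependent.

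Two cases now arise. If $x$ does not occur in the atomic formula $\psi$, then $\pi_{\psi}(m) \leq 2$ for every $m$, so $\psi$ is trivially $n$-dependent by Remark \ref{remark on number of types}(3). Otherwise $x$ is one of the variables of $\psi$, and the remaining variables of $\psi$ lie in the parameter variables $y_0, \ldots, y_{n-1}$; by item (2), permuting these parameter variables does not affect $n$-dependence, so we may assume $\psi$ is presented in a standard form as an atomic formula with distinguished free variable $x$ of length $1$. This is exactly the case covered by the hypothesis, hence $\psi$ is $n$-dependent.

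The only nontrivial ingredient beyond a citation chain is the reduction of a general atomic formula to one in the precise form demanded by the hypothesis, and I expect no real obstacle there since adding dummy variables and permuting variables are already known to preserve $n$-dependence. The rest is bookkeeping.
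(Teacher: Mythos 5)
Your proposal is correct and matches the paper's (implicit) argument: the paper gives no proof of this proposition beyond the parenthetical citations, treating (1)--(3) as restatements of Corollary \ref{boolean combinations}, Corollary \ref{cor: permuting vars} and Theorem \ref{thm: reduction to one var}, and (4) as exactly the combination you spell out (quantifier elimination, reduction to Boolean combinations of atomic formulas, dummy variables, and the single-free-variable reduction). Your handling of the edge cases (atomic formulas omitting $x$, or using only a subtuple of the parameter variables) is sound bookkeeping and does not depart from the intended route.
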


\if 0
\section{Random hypergraphs on an independent set}
(This section is not checked yet.)
\begin{proposition}\rm
Suppose that $T$ is simple with $\IP_{n}$. Then there is $\phi(x,y)$, small set
$A$ and an independent set $B$ (of tuples) over $A$ such that $\left(B,\phi\right)$
is isomorphic to a random $(n+1)$-hypergraph.
\end{proposition}
\begin{proof}
We check for $\IP_{1}$, the general case is similar. Let $A=(a_{i})_{i<\omega}$
be such that $\{\phi(x,a_{i})^{t_{i}}\}_{i<\omega}$ is consistent
for any $t_{i}\in\{0,1\}$. As $\{x\neq a_{i}\}_{i<\omega}$ is f.s.
in $A$, we can extend it to a global type $p(x)$ f.s. in $A$. Let
$A'=(a_{i}')_{i<\omega}$ be a sequence in $p(x)$ over $A$, that
is $a_{i}'\models p(x)|_{Aa'_{<i}}$. Notice that in particular $A'$
is an independent set over $A$.

\begin{claim}
 $\{\phi(x,a_{i}')^{t_{i}}\}_{i<\omega}$ does not
fork over $A$ for any $t_{i}\in\{0,1\}$.
\end{claim}
Choose some $(t_{i})_{i<\omega}$. Enough to show that $\psi(x,a'_{<n})=\bigwedge_{i<n}\phi(x,a_{i}')^{t_{i}}$
does not fork over $A$ for every $n<\omega$. Notice that $(\bar{a}'_{i})_{i<\omega}=(a'_{ni}a'_{ni+1}\ldots a'_{n(i+1)-1})_{i<\omega}$
is a Morley sequence over $A$, starting with $a'_{<n}$. Besides
$\{\psi(x,\bar{a}_{i}')\}_{i<\omega}$ is consistent (by finite satisfiability
of $tp(A')$ in $A$). Thus by Kim's lemma $\psi(x,a'_{<n})$ does
not fork over $A$.

Let $(\omega,R)$ be a countable symmetric random graph. For each
$i<\omega$ let $s_{i}=\{j\in\omega\,:\, R(i,j)\}$. By the Claim
A $\{\phi(x,a_{j}')^{j\in s_{i}}\}_{j<\omega}$ is realised by some
$b_{s_{i}}\ind_{A}A'b_{s_{<i}}$. Let $d_{i}=b_{s_{i}}a'_{i}$ and
$D=(d_{i})_{i<\omega}$.

\begin{claim}
$D$ is an independent set over $A$.
\end{claim}

Enough to show that $a_{i}b_{i}\ind_{}a_{<i}b_{<i}$.

1) $b_{<i}\ind a_{\leq i}$ : inductively assume $b_{<i-1}\ind a_{\leq i}$.
As $b_{i-1}\ind a_{\leq i}b_{<i-1}$, so $b_{i-1}\ind_{b_{<i-1}}a_{\leq i}$,
and together $b_{<i}\ind a_{\leq i}$.

2) $a_{i}\ind a_{<i}b_{<i}$ : $\, b_{<i}\ind_{a_{<i}}a_{i}$ by 1),
together with $a_{<i}\ind a_{i}$ implies $a_{<i}b_{<i}\ind a_{i}$,
so $a_{i}\ind a_{<i}b_{<i}$.

3) $b_{i}\ind a_{\leq i}b_{<i}$ $\implies$ $b_{i}\ind_{a_{i}}a_{<i}b_{<i}$,
and together with 2) this finally implies $a_{i}b_{i}\ind a_{<i}b_{<i}$.

~

Let $R(x_{1}x_{2},y_{1}y_{2})=\phi(x_{1},y_{2})\lor\phi(x_{2},y_{1})$.
It is easy to see that $(D,R)$ is a random graph. 
\end{proof}
1) $A$ can not be taken empty in general (consider an equivalence
relation with infinitely many classes and a random graph on each class,
no edges between the classes).

2) In a supersimple theory $A$ can be taken finite.

\fi

\if0
****************************************  
\section{Sauer-Shelah Lemma and Generalized VC-dimension}
First we recall classical VC-dimension and Sauer-Shelah lemma.
Let $X$ be a set and $\calC\subset \P(X)$ a class of subsets of $X$.
\begin{definition}[Vapnik, Chervonenkis]\rm
\begin{enumerate}
\item
For $A\subset X$, $\calC|A:=\{C\cap A: C\in\calC\}$.
\item
A subset $A\subset X$ is said to be shattered by $\calC$ if $\calC|A=\P(A)$.
\item
$S(\calC):=\sup\{|A|:A\subset X$ is shattered by $\calC\}$ and $V(\calC):=S(\calC)+1$.
$S(\calC)$ (or $V(\calC)$) is called VC-dimension of $\calC$.
\item
(Shatter function) $\pi_\calC(m):=\max\{|\calC|A|: A\subset X, |A|=m\}$.
\end{enumerate}
\end{definition}

\begin{remark}\rm
\begin{enumerate}
\item
$0\leq \pi_\calC(m)\leq 2^m$.
\item
$\pi_\calC(m)=2^m$ if and only if $m<V(\calC)$.
\end{enumerate}
\end{remark}

\begin{fact}
For $d\leq m$, $\sum_{i<d}\binom{m}{i}\leq m^{d-1}$.
\end{fact}

\begin{fact}\rm
\begin{enumerate}
\item
(Sauer-Shelah lemma) Let $V(\calC)=d$. Then $\pi_\calC(m)\leq \sum_{i<d}\binom{m}{i}$
 for $m\geq d$.
Especially $\log \pi_\calC(m)=O(\log m)$.
\item
There is a class $\calC\subset\P(X)$ with $V(\calC)=d$ such that $\pi_\calC(m)=\sum_{i<d}\binom{m}{i}$ for $m\geq d$, i.e., the bound given in Sauer-Shelah lemma is tight.
\end{enumerate}
\end{fact}

Throughout this section, we fix (infinite) sets $X_0,\ldots ,X_{n-1}$ and $X={\prod}_{i<n}X_i$. 
For a class $\calC\subset \P(X)$ we'll define a notion of $VC_n$-dimension of $\calC$.

\begin{definition}\rm
\begin{enumerate}
\item
A subset $A\subset X$ is said to be a box of $\size(A)=m$ if
$A={\prod}_{i<n} A_i$ for some $A_i\subset X_i$ ($i<n$) with $|A_i|=m$.
\item
$V_n(\calC):= \sup\{\size(A): A\subset X$ is a box shattered by $\calC\}+1$.
\item
$\pi_{\calC,n}(m):=\max\{|\calC|A|: A\subset X$ is a box of size $m\}$.
\end{enumerate}
\end{definition}

\begin{remark}
\begin{enumerate}
\item
$0\leq \pi_\calC(m)\leq 2^{m^n}$.
\item
$\pi_\calC(m)=2^{m^n}$ if and only if $m<V_n(\calC)$.
\end{enumerate}
\end{remark}

We'll generalize Sauer-Shelah lemma in the below.
First we introduce some notation from extremal graph theory.
\begin{definition}\rm
\begin{enumerate}
\item
$G^{(n)}(m_0,\ldots ,m_{n-1})$ denotes an $n$-partite $n$-uniform hypergraph such that the $i$-th part has $l_i$-verticies. If $m_0=\ldots =m_{n-1}=m$, it is denoted by $G^{(n)}(m)$. 
\item
$K^{(n)}(m)$ is the complete $n$-partite $n$-uniform hypergraph $G^{(n)}(m)$.
(For example, $K^{(2)}(3)$ is the bipartite complete graph $K_{3,3}$.)
\item
${\rm ex}_n(m; K^{(n)}(d))$ is the minimum natural number $k$ satisfying the following:
For every (not partite) $n$-uniform hypergraph $G$ with $m$-verticies, if $G$ has $(\geq k)$-edges then $G$ contains $K^{(n)}(d)$ as a subgraph.
\item
Zarankiewicz number $z_n(m;d)$ is the minimum natural number $z$ satisfying the following:
Every $G^{(n)}(m)$ having $(\geq z)$-edges contains $K^{(n)}(d)$ as a subgraph.
\end{enumerate}
\end{definition}

\begin{fact}[Erd\"os\cite{Erdos}]\label{Erdos}\rm
\begin{enumerate}
\item
Suppose that $m>>n,d$ and let $\varepsilon=\dfrac{1}{d^{n-1}}$.
\begin{enumerate}
\item
${\rm ex}_n(m; K^{(n)}(d))\leq m^{n-\varepsilon}$.
\item
Especially, $z_n(m;d)\leq (nm)^{n-\varepsilon}$.
\end{enumerate}
\item
$z_2(m;2)={m^{3/2}}(1+o(1))$, i.e. for every $\delta>0$ there is $k\in \omega$ such that $z_2(m;2)\geq (1-\delta){m^{3/2}}$ for every $m>k$.
Hence we can find $c>0$ such that $z_2(m;2)\geq cm^{3/2}$ for every $m\in\omega$.
\end{enumerate}
\end{fact}

The next proposition is our generalization of Sauer-Shelah lemma.
\begin{proposition}\label{generalized Sauer}\rm
Let  $\varepsilon = \dfrac{1}{d^{n-1}}$. 
\begin{enumerate}
\item
Let $V_n(\calC)=d$. 
Then $\pi_{\calC,n}(m)\leq \sum_{i<z}\binom{m^n}{i}$ for $m\geq d$, where $z=z_n(m;d)$.
Especially $\log \pi_{\calC,n}(m)=O(m^{n-\varepsilon}\log m)$.
\item
There is a class $\calC\subset \P(X)$ with $V_n(\calC)=d$ such that $\pi_{\calC,n}(m)\geq 2^{z-1}$ where $z=z_n(m;d)$. (In this case, $z_n(m;d)=O(\log \pi_{\calC,n}(m))$.)
\end{enumerate}
\end{proposition}

Note that the first item in the proposition gives Sauer-Shelah lemma where $n=1$,
since $z_1(m;d)=d$.
In addition, by Fact \ref{Erdos} (2), we can find a class $\calC$ and $c>0$ such that $\pi_{\calC,2}(m)\geq 2^{cm^{3/2}}$ for every $m$.
In the rest of this section, we give a proof of Proposition \ref{generalized Sauer}.
First we introduce a method called ``shifting technique".
A proof is given in $\cite{Ngo}$.
\begin{lemma}[Shifting technique]\label{shifting}\rm
Let $A$ be any finite set and $\calC\subset \P(A)$.
Then there is $\calC'\subset \P(A)$ such that 
\begin{enumerate}
\item
$|\calC|=|\calC'|$,
\item
if $\calC'$ shatters $B\subset A$ then so does $\calC$,
\item
if $B\subset C\in \calC'$ then $B\in\calC'$.
\end{enumerate}
\end{lemma}

\begin{proof}[proof of Proposition \ref{generalized Sauer}]
(1):
One can easily check $\sum_{i<z}\binom{m^n}{i}\leq m^{n(z-1)}$.
Hence we have $\log\left(\sum_{i<z}\binom{m^n}{i}\right)=O(z\log m)=O(m^{n-\varepsilon}\log m)$, by Fact \ref{Erdos}.
We show that $\pi_{\calC,n}(m)\leq \sum_{i<z}\binom{m^n}{i}$.
Let $A\subset X$ be a box of size $m$.
It is enough to show that $|\calC|A|\leq\sum_{i<z}\binom{m^n}{i}$.
By applying shifting technique in Lemma \ref{shifting} to $\cal C|A$, 
we can find $\calC' \subset \P(A)$ satisfying the conditions in Lemma \ref{shifting}.
By the third condition in the lemma, every $B\in\calC'$ is shattered by $\calC'$.
Then, by the second condition, $\calC$ shatters all members in $\calC'$.
Since $V_n(\calC)=d$, $\calC'$ contains no box of size $d$.

\begin{claim}
For $B\in \calC'$, $|B|<z_n(m;d)$.
\end{claim}
Suppose that $|B|\geq z_n(m;d)$.
Consider an $n$-partite $n$-uniform graph $G=(A_0\sqcup\ldots \sqcup A_{n-1}; B)$ (Recall that $A={\prod}_{i<n}A_i$ is a box of size $m$).
Then $G$ has a subgraph $G'\cong K^{(n)}(d)$ by Lemma \ref{Erdos}.
Notice that the set of edges $B'$ of $G'$ (i.e. $B'=E(G')$)  is a subset of $B$, hence $B'$ is shattered by $\calC$.
However, $B'$ is a box of size $d$. This contradicts the fact that $V_n(\calC)=d$.
(End of proof of the claim)

Therefore, $\calC'\subset\{B\subset A: |B|<z_n(m;d)\}$.
This means that 
$$
|\calC|A|=|\calC'|\leq|\{B\subset A: |B|<z_n(m;d)\}|\leq\sum_{i<d}\binom{m^n}{i}.
$$

(2):
Without loss of generality, we may assume $X_i=\omega$ for all $i<n$, since the shutter function $\pi_{\calC,n}$ is determined locally, i.e. if $X\subset X'$ and $\calC' = \calC$ then $\pi_{\calC', n}(m)=\pi_{\calC,n}(m)$.
For each $m\in\omega$ with $m\geq d$, let $(A^m_0\cup\ldots \cup A^m_{n-1}; E^m)$ be an $n$-partite $n$-uniform hypergraph having $(z_n(m;d)-1)$-edges with no subgraph isomorphic to $K^{(n)}(d)$.
We may assume that $X_i$ is the disjoint union of $A_i^m$ $(m\geq d)$.
Let $\calC = \bigcup_m \P(E^m)$.
Clearly, we have $\pi_{\calC,n}(m)\geq 2^{z_n(m;d)-1}$ and $V_n(\calC)\geq d$.
On the other hand, since every $C\in \calC$ is in some $P(E^m)$,
every box $B$ shuttered by $\calC$ must be a subset of some ${\prod}_iA_i^m$.
This means $V_n(\calC)\leq d$. 
\end{proof}
**********************************************
\fi

\appendix

\section{Ramsey property for hypergraphs}
In this section we verify that the two classes of structures considered in the previous sections have Ramsey property.
First we see that the class of finite ordered $n$-partite sets is Ramsey, and then that the class of all finite (linearly) ordered $n$-partite $n$-uniform hypergraphs is also Ramsey.
Basic notation and definitions are already given in Section 4.1, so we don't repeat them.

\if0
********************
\begin{remark}\rm
Let $K$ be a set of $L$-structures.
Suppose that $K$ has amalgamation property.
Let $A,B_0\in K$ such that $K$ has $(A,B_0)$-Ramsey property.
Then for any $B_1\in K$ with $B_0\subset B_1$ satisfying $\binom{B_1}{A}=\binom{B_0}{A}$, $K$ has $(A,B_1)$-Ramsey property.
To see this, let $C_0\to (B_0)^A_k$. 
By using amalgamation property, we can find an extension $C_0\subset C_1\in K$ such that 
for every $B_0'\in\binom{C_0}{B_0}$ there is $B_1'\in\binom{C_1}{B_1}$ satisfying $B_1'\supset B_0'$.
One can easily check that $C_1\to(B_1)^A_k$.
\end{remark}
*********************
\fi

For a given class $K$ of $L$-structures, let $K^*$ be the hereditary closure of $K$, i.e. $K^*=\{A: A\subseteq B, B\in K\}$.
\begin{lemma}\label{hereditary closure}\rm
Let $K$ be a set of $L$-structures satisfying Ramsey property.
Suppose that every $A\in K$ has no non-trivial automorphisms, i.e. $Aut(A)=\{id_A\}$.
If the hereditary closure $K^*$ of $K$ has the amalgamation property, then $K^*$ has Ramsey property.
\end{lemma}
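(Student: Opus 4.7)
The plan is to reduce the Ramsey property for $K^*$ to that of $K$ via amalgamation, and to use rigidity of $K$-objects to make every identification canonical. Fix $A, B \in K^*$ and $k \in \omega$, and choose ambient $\hat A, \hat B \in K$ with $A \subseteq \hat A$ and $B \subseteq \hat B$. Since $B$ is finite, so is $\binom{B}{A} = \{A_1, \ldots, A_t\}$.

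First I would build, by iterated amalgamation inside $K^*$, an extension $B^* \in K^*$ of $B$ such that every $A_i \subseteq B$ extends to a chosen copy $\hat A_i$ of $\hat A$ inside $B^*$. Starting from $B_0 = B$, at stage $i$ amalgamate $B_{i-1}$ with $\hat A$ in $K^*$ over the shared substructure $A \cong A_i \subseteq B_{i-1}$; after renaming so that $B_{i-1}$ is a substructure of the amalgam, call the result $B_i$. It contains a copy $\hat A_i$ of $\hat A$ extending $A_i$, and the previously produced $\hat A_j$, $j<i$, persist. Set $B^* := B_t \in K^*$ and fix any $\hat B^* \in K$ with $B^* \subseteq \hat B^*$; the distinguished copies $\hat A_1, \ldots, \hat A_t$ still sit inside $\hat B^*$.

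Next, apply the Ramsey property of $K$ to $\hat A$, $\hat B^*$ and $k$ to obtain $C \in K \subseteq K^*$ with $C \to (\hat B^*)^{\hat A}_k$; this $C$ will witness $C \to (B)^A_k$ in $K^*$. Given any coloring $c: \binom{C}{A} \to k$, define $c': \binom{C}{\hat A} \to k$ by $c'(X) := c(\iota_X(A))$, where $\iota_X: \hat A \to X$ is the \emph{unique} isomorphism supplied by rigidity of $\hat A \in K$. By choice of $C$ there is a $c'$-monochromatic copy $Y$ of $\hat B^*$ in $C$. Rigidity of $\hat B^* \in K$ yields a unique isomorphism $\jmath : \hat B^* \to Y$, which transports $B \subseteq \hat B^*$ to a well-defined copy $B^\dagger := \jmath(B)$ of $B$ in $Y \subseteq C$, each $A_i$ to $A_i^\dagger := \jmath(A_i) \subseteq B^\dagger$, and each $\hat A_i$ to $\hat A_i^\dagger := \jmath(\hat A_i) \in \binom{Y}{\hat A}$. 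By construction $A_i^\dagger \subseteq \hat A_i^\dagger$ agrees with the image of $A \subseteq \hat A$ under the unique iso $\hat A \to \hat A_i^\dagger$, so $c'(\hat A_i^\dagger) = c(A_i^\dagger)$; since $c'$ is constant on $\binom{Y}{\hat A}$, the coloring $c$ is constant on $\binom{B^\dagger}{A}$, as required.

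The main point to watch is the bookkeeping of copies inside possibly non-rigid $K^*$-structures: after Ramsey produces a copy of the rigid $K$-object $\hat B^*$ inside $C$, the ``induced'' copies of the non-rigid substructures $B$ and $A_i$ must be genuinely well-defined subsets of $C$, and the auxiliary coloring $c'$ must actually compute $c$ on them. This is precisely where the hypothesis that all elements of $K$ have trivial automorphism group is used — without it, the assignments $X \mapsto \iota_X(A)$ and $Y \mapsto \jmath(B)$ would depend on unspecified choices, and the transfer of monochromaticity from $c'$ back to $c$ would break down.
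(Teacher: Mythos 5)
Your proof is correct and follows essentially the same route as the paper's: fix a rigid extension $A\subseteq\hat A$ in $K$, use amalgamation to extend $B$ so that every copy of $A$ in $B$ sits inside a canonically embedded copy of $\hat A$, apply the Ramsey property of $K$ to that extension, and transfer the coloring back via the unique isomorphisms. Your version is in fact slightly more careful than the paper's at one point, namely in noting that the amalgam lies only in $K^*$ and must then be enlarged to some $\hat B^*\in K$ before the Ramsey property of $K$ can be invoked.
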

\begin{proof}
Let $A, B\in K^*$. 
Fix an extension $A\subseteq A_0\in K$ and consider a structure $A_0'\cong A_0$.
Note that, in general, $\binom{A_0'}{A}$ is not a singleton. 
However, by the assumption, we can recognize a unique $A'\subset A_0'$ which corresponds to $A\subset A_0$ through the unique isomorphism $A_0'\cong A_0$.
By applying amalgamation property, we have an extension $B_0\in K$ of $B$ such that for every $A'\in \binom{B}{A}$ there is an extension $A'\subseteq A_0' \in\binom{B_0}{A_0}$ which is isomorphic to the extension $A\subseteq A_0$. 
Since $K$ has Ramsey property, we can find $C\in K$ such that $C\to(B_0)^{A_0}_k$.
It is easy to check that $C\to(B)^A_k$, since any coloring $c:\binom{C}{A}\to k$ induces a coloring $\tilde c(A_0')=c(A')$ for $A'\subset A_0'\in \binom{C}{A_0}$, hence
there is $B_0'\subset C$ on which $\tilde c$ is constant. Clearly, for a $B'\subset B_0$, $c$ is constant on $\binom{B'}{A}$.
\end{proof}

For $L$-structures $A$ and $B$, let $A\oplus B$ be an 
 $L\cup\{P_0(x), P_1(x)\}$-structure such that $P_0=A$, $P_1=B$ and $A\oplus B=P_0\sqcup P_1$.
Let $K_0$ and $K_1$ be two classes of $L$-structures.
We define a class $K_0\oplus K_1$ of $L\cup\{P_0(x), P_1(x)\}$-structures by $K_0\oplus K_1=\{A_0\oplus A_1: A_0\in K_0, A_1\in K_1\}$.
\begin{lemma}\label{direct sum}\rm
If $K_0$ and $K_1$ have Ramsey property, then so does $K_0\oplus K_1$.
\end{lemma}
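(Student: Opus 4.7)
The plan is to run a standard product Ramsey argument, exploiting the fact that any embedding of an $L \cup \{P_0,P_1\}$-structure must respect the partition, so a copy of $A = A_0 \oplus A_1$ inside $C = C_0 \oplus C_1$ is determined by a pair consisting of a copy of $A_0$ inside $C_0$ and a copy of $A_1$ inside $C_1$. More precisely, I will fix $A_0, B_0 \in K_0$, $A_1, B_1 \in K_1$, and a number of colours $k$, and I will produce $C_0 \in K_0$, $C_1 \in K_1$ such that $C := C_0 \oplus C_1$ satisfies $C \to (B_0 \oplus B_1)^{A_0 \oplus A_1}_k$.

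First I would apply the Ramsey property of $K_1$ to choose $C_1 \in K_1$ with $C_1 \to (B_1)^{A_1}_k$, and set $M := \left|\binom{C_1}{A_1}\right|$. Next, since $K_0$ is Ramsey, I choose $C_0 \in K_0$ with $C_0 \to (B_0)^{A_0}_{k^M}$, where we view $k^M$ as the set of functions $\binom{C_1}{A_1} \to k$. Then I set $C := C_0 \oplus C_1 \in K_0 \oplus K_1$.

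To verify $C \to (B)^A_k$ with $B := B_0 \oplus B_1$, take any colouring $c : \binom{C}{A} \to k$. Using the canonical identification $\binom{C}{A} = \binom{C_0}{A_0} \times \binom{C_1}{A_1}$, for each $A_0' \in \binom{C_0}{A_0}$ I define $f_{A_0'} : \binom{C_1}{A_1} \to k$ by $f_{A_0'}(A_1') := c(A_0' \oplus A_1')$, and then set $\tilde c (A_0') := f_{A_0'}$, yielding a colouring $\tilde c : \binom{C_0}{A_0} \to k^M$. The choice of $C_0$ gives $B_0' \in \binom{C_0}{B_0}$ on which $\tilde c$ is constant, i.e.\ there is a single $f : \binom{C_1}{A_1} \to k$ with $c(A_0' \oplus A_1') = f(A_1')$ for all $A_0' \in \binom{B_0'}{A_0}$ and all $A_1' \in \binom{C_1}{A_1}$. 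Applying the Ramsey property of $C_1$ to the colouring $f$, I obtain $B_1' \in \binom{C_1}{B_1}$ on which $f$ is constant. Then $c$ is constant on $\binom{B_0' \oplus B_1'}{A_0 \oplus A_1}$, and $B_0' \oplus B_1' \in \binom{C}{B}$, as required.

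There is no real obstacle here; the only point requiring a brief sanity check is the identification $\binom{C_0 \oplus C_1}{A_0 \oplus A_1} = \binom{C_0}{A_0} \times \binom{C_1}{A_1}$, which holds because $P_0, P_1$ are named predicates and therefore any embedding $A_0 \oplus A_1 \hookrightarrow C_0 \oplus C_1$ must send $P_i$ into $P_i$ for $i = 0,1$. Modulo that observation, the argument is the standard two-step product Ramsey reduction.
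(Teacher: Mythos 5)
Your proof is correct and follows essentially the same two-step product argument as the paper: fix $C_1$ with $C_1\to(B_1)^{A_1}_k$, homogenize $\binom{C_0}{A_0}$ against every copy of $A_1$ in $C_1$ simultaneously, and then colour $\binom{C_1}{A_1}$ by the resulting constant values. The only (immaterial) difference is that you achieve the simultaneous homogenization in a single application of the Ramsey property of $K_0$ with $k^M$ colours, whereas the paper iterates it with $k$ colours along a tower $C_0^0,\ldots,C_0^m$.
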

\begin{proof}
Let $A_0\oplus A_1, B_0\oplus B_1\in K_0\oplus K_1$.
Fix $C_1\in K_1$ such that $C_1\to(B_1)^{A_1}_k$.
Let $m = \left|\binom{C_1}{A_1}\right|$.
We can find $C_0^0,\ldots ,C_0^m=C_0\in K$ such that $C_0^0\to (B_0)^{A_0}_k$ and $C_0^{i+1}\to(C_0^i)^{A_0}_k$ for $i\leq m$.
We show that $C_0\oplus C_1\to(B_0\oplus B_1)^{A_0\oplus A_1}_k$.
Let $c:\binom{C_0\oplus C_1}{A_0\oplus A_1}\to k$ be a coloring.
Then for each $A_1' \in \binom{C_1}{A_1}$, we have an induced coloring
$c_{A_1'}: \binom{C_0}{A_0}\to k$ such that $c_{A_1'}(A_0')=c(A_0'\oplus A_1')$. 
By the construction, we can find $B_0'\in \binom{C_0}{B_0}$ such that $c_{A_1'}$ is constant on $\binom{B_0'}{A_0}$ for every ${A_1'}\in\binom{C_1}{A_1}$.
Then, the values of $c_{A_1'}$ on $\binom{B_0'}{A_0}$ define a coloring 
$\tilde c: \binom{C_1}{A_1}\to k$ by $\tilde c(A_1')=c_{A_1'}(A_0')$ where $A_0'\subset B_0'$.
Hence there is $B_1'\in\binom{C_1}{B_1}$ such that $\tilde c$ is constant on $\binom{B_1'}{A_1}$. 
Therefore, $c$ is constant on $\binom{B_0'\oplus B_1'}{A_0\oplus A_1}$. 
\end{proof}
The classical Ramsey theorem implies that the class of all finite linearly ordered sets has Ramsey property. Therefore, with the above lemmas, we have the following:

\begin{proposition}\label{partite ordered sets has Ramsey property}\rm
Let $K$ be the set of finite ordered $n$-partite sets.
Then both $K$ and its hereditary closure $K^*$ have Ramsey property.
\end{proposition}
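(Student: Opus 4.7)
The plan is to establish the Ramsey property for $K$ by induction on $n$, reducing to the classical finite Ramsey theorem via Lemma \ref{direct sum}, and then to deduce the Ramsey property for the hereditary closure $K^*$ by appealing to Lemma \ref{hereditary closure}.

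For $K$, I would induct on $n$. The base case $n=1$ is just the statement that the class of finite linearly ordered sets is Ramsey, i.e.\ the classical finite Ramsey theorem. For the inductive step, observe that every finite ordered $n$-partite set $(A;<,P_0,\ldots,P_{n-1})$ can be viewed (after renaming predicates) as a direct sum $A' \oplus A''$ in the sense of Section 4.1, where $A' = (P_0\cup\ldots\cup P_{n-2};<,P_0,\ldots,P_{n-2})$ is an ordered $(n-1)$-partite set, $A'' = (P_{n-1};<)$ is a finite linearly ordered set, and the $\oplus$-structure is equipped with the natural order placing $A'$ before $A''$. Writing $K_m$ for the class of finite ordered $m$-partite sets, this identifies $K_n$ with a relabeling of $K_{n-1}\oplus K_1$; since $K_{n-1}$ is Ramsey by induction and $K_1$ is Ramsey by the base case, Lemma \ref{direct sum} gives the Ramsey property for $K_n=K$.

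To deduce Ramsey for $K^*$, I verify the three hypotheses of Lemma \ref{hereditary closure}. First, $K$ has the Ramsey property by the previous paragraph. Second, every $A\in K$ has trivial automorphism group, since each element of $A$ is uniquely determined by the pair (part index, rank within that part), and both invariants are preserved by any $L_{\op}$-automorphism. Third, $K^*$ has the amalgamation property: given embeddings $A\hookrightarrow B_0$ and $A\hookrightarrow B_1$ in $K^*$, form the pushout $C = B_0 \cup_A B_1$, and on each part $P_i\cap C$ extend the orders by interleaving $(B_0\setminus A)\cap P_i$ and $(B_1\setminus A)\cap P_i$ inside every $A\cap P_i$-gap (e.g.\ placing all $B_0$-elements before all $B_1$-elements within each gap), producing a linear order that restricts correctly on both $B_0$ and $B_1$; the resulting structure lies in $K^*$. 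Lemma \ref{hereditary closure} then yields the Ramsey property for $K^*$.

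The only technical nuisance — hardly a real obstacle — is the bookkeeping involved in reconciling the notation of Lemma \ref{direct sum}, which introduces fresh predicates $P_0, P_1$, with the $n$ predicates already present in the target class, and in fixing the convention that the order on $A \oplus B$ places $A$ before $B$. Once this is done, the proof is a routine application of the two lemmas, with no new combinatorics beyond classical finite Ramsey.
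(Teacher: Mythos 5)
Your proof is correct and is essentially the proof the paper intends: the paper derives this proposition in one line from the classical Ramsey theorem together with Lemma \ref{direct sum} (iterated to build the $n$-partite class from linear orders) and Lemma \ref{hereditary closure}, which is exactly your argument with the routine verifications (rigidity, amalgamation, predicate bookkeeping) written out. No discrepancy with the paper's approach.
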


Next we'll prove that the set of finite ordered $n$-partite $n$-uniform hypergraphs has Ramsey property. 
Let $R$ be an $n$-place relation for some $n\geq 1$.
Recall that an ordered $n$-uniform hypergraph is an $L$-structure $A$ such that $R$ is symmetric and irreflexive on $A$ and that $<$ is a linear ordering on $A$.
Our starting point is the following well-known fact:
\begin{fact}[Nes\'etril, R\"odl \cite{MR0437351,MR692827}; Abramson, Harrington \cite{MR503795}]\rm\label{graph Ramsey}
Let $K$ be the set of all finite ordered $n$-uniform hypergraphs.
Then $K$ has Ramsey property.
\end{fact}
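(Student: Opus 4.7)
This is a classical structural Ramsey theorem, proved independently by Ne\v{s}et\v{r}il--R\"odl and by Abramson--Harrington, and the honest plan is to cite those references rather than to reprove the theorem. For a sketch of why it holds, the standard argument proceeds by induction on the arity $n$. The base case $n=1$ is the finite Ramsey theorem for linearly ordered sets: since a finite linear order has trivial automorphism group, $\binom{B}{A}$ is canonically identified with the set of increasing injections $A\to B$, and a $k$-coloring of these reduces to an ordinary $k$-coloring of $\binom{|B|}{|A|}$, so the statement follows from Ramsey's theorem for $|A|$-element subsets.

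For the inductive step, given ordered $n$-uniform hypergraphs $A,B\in K$ and $k\in\omega$, one constructs $C$ via the \emph{partite construction}. Writing $B=\{b_1<\cdots<b_m\}$, one introduces ordered $m$-partite $n$-uniform hypergraphs whose parts $P_1<\cdots<P_m$ play the roles of $b_1,\ldots,b_m$; a \emph{transversal} copy of $A\subseteq B$ in such a structure is one that selects its $i$-th vertex (in the order of $A$) from the part corresponding to $b_i$. One proves by a separate induction the partite version: there is an ordered $m$-partite $C^*$ such that every $k$-coloring of transversal copies of $A$ in $C^*$ admits a monochromatic transversal copy of $B$. This is built by an iterative amalgamation along the parts, at each stage invoking the inductive hypothesis on $(n-1)$-uniform hypergraphs (applied to the trace on the previously constructed parts when one vertex in the new part is fixed) together with ordinary Ramsey to absorb the colors contributed by the newly added part. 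Forgetting the partition of $C^*$ then yields the desired $C$.

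The main obstacles are two. First, one must carry the linear order consistently through the partite amalgamation so that every embedding $A\to C$ is automatically an \emph{ordered} embedding; this is handled by always ordering the parts of $C^*$ according to $b_1<\cdots<b_m$ and by inserting newly amalgamated vertices coherently into the existing global order at each stage. Second, one needs the ``partite lemma'' that transfers the partite Ramsey statement to the non-partite one: given any $k$-coloring of $\binom{C}{A}$ in the unpartitioned $C$, one lifts it to a coloring of transversal copies in a sufficiently rich partite refinement and pulls a monochromatic $B$ back down. Executing this lifting while preserving the linear order is what distinguishes the ordered case from the easier unordered one, and it is the technical core of the Ne\v{s}et\v{r}il--R\"odl approach.
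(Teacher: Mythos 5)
The paper does not prove this statement either: it is imported as a known Fact with citations to Ne\v{s}et\v{r}il--R\"odl and Abramson--Harrington, exactly as you propose to do, so your approach matches the paper's. Your accompanying sketch of the partite construction is a reasonable outline of the standard argument and is not needed for the paper's purposes.
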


Recall that $L_{\opg}=\{R(x_0,\ldots ,x_{n-1}),<, P_0(x),\ldots , P_{n-1}(x)\}$ and that
an ordered $n$-partite $n$-uniform hypergraph is an $L_{\opg}$-structure $A$ satisfying the following:
\begin{enumerate}
\item
$A|\{R,P_0,\ldots ,P_{n-1}\}$ is an $n$-partite $n$-uniform hypergraph,
\item
$<$ is a total ordering on $A$ satisfying $P_0(A)<\ldots <P_{n-1}(A)$.
\end{enumerate}

\begin{proposition}\label{ramsey property of partite graphs}\rm
Let $K$ be the set of finite ordered $n$-partite $n$-uniform hypergraphs.
Then $K$ has Ramsey property.
\end{proposition}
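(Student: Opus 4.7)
The plan is to carry out a Nes\'etril--R\"odl style partite construction, combining Proposition \ref{partite ordered sets has Ramsey property} (Ramsey for ordered $n$-partite sets) with Fact \ref{fac: hypergraphs are Ramsey} (Ramsey for ordered $L_0$-hypergraphs) to produce a Ramsey target $C \in K$ for any given $A, B \in K$ and $k \geq 1$. The key observation is that every copy of $A$ inside an ordered $n$-partite $n$-uniform hypergraph factors canonically as a ``skeleton'' (a copy of $A|L_{\op}$ inside the underlying partite set) together with a hyperedge ``decoration'' on that skeleton compatible with $A$; since $A$ is finite, there are only finitely many possible decorations --- call this number $T$.

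First, I would apply Proposition \ref{partite ordered sets has Ramsey property} to the underlying partite sets $A|L_{\op}$ and $B|L_{\op}$ to obtain an ordered $n$-partite set $\Pi$ with $\Pi \to (B|L_{\op})^{A|L_{\op}}_{k^T}$, and enumerate the $A|L_{\op}$-copies in $\Pi$ as $\alpha_1,\ldots,\alpha_M$. Iteratively for $j=1,\ldots,M$, I would construct ordered partite $n$-uniform hypergraphs $C_0 \subseteq C_1 \subseteq \cdots \subseteq C_M = C$ supported on (suitable enlargements of) $\Pi$: at step $j$ a \emph{partite amalgamation lemma} --- a consequence of Fact \ref{fac: hypergraphs are Ramsey} applied locally around $\alpha_j$ --- should let me augment $C_{j-1}$ with additional vertices and hyperedges so that, restricted to $A$-copies whose skeleton is $\alpha_j$, every compatible decoration of $\alpha_j$ is realized in a controlled way, and so that this step does not interfere with the skeletons $\alpha_i$ ($i<j$) already handled.

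Given $C = C_M$, any $k$-coloring of $\binom{C}{A}$ then induces a $k^T$-coloring of $\binom{\Pi}{A|L_{\op}}$ by recording the $k$-color assigned to each of the $T$ decorations at each skeleton; by the Ramsey property of $\Pi$ we obtain a $B|L_{\op}$-copy $\beta$ on which this refined coloring is constant, and by construction $\beta$ supports every hyperedge decoration of $B$ --- in particular, a $B$-copy monochromatic under the original coloring. The hard part will be the partite amalgamation lemma: showing that one can add the required realizations of $A$-decorations above $\alpha_j$ while respecting the partite ordering on $C_j$ and without accidentally creating spurious skeletons coincident with some $\alpha_i$ for $i < j$. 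This is the heart of the classical partite construction and is where the proof departs most substantially from the purely set-theoretic argument of Proposition \ref{partite ordered sets has Ramsey property}.
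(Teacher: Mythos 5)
Your plan is a genuine gap, in two places. First, the entire weight of the argument rests on the ``partite amalgamation lemma,'' which you do not prove and which is not a consequence of Fact \ref{fac: hypergraphs are Ramsey} ``applied locally.'' What you are sketching is essentially the Ne\v{s}et\v{r}il--R\"odl partite construction itself, and the step you defer is exactly its technical core (the partite lemma, normally obtained from the Hales--Jewett theorem, iterated over all skeletons $\alpha_1,\ldots,\alpha_M$ together with a projection $C\to\Pi$ that you never set up). Deferring it means the proposal does not yet contain a proof. Second, the decoration bookkeeping does not work as stated. An ordered $n$-partite set is rigid, so a given skeleton $\alpha\cong A|L_{\op}$ admits exactly one hyperedge set turning it into a copy of $A$; what actually varies over a skeleton is the set of its \emph{lifts} into $C$, and the number of lifts is governed by the fiber sizes of $C$ over $\Pi$, not by a constant $T=T(A)$. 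Consequently the induced $k^T$-coloring of $\binom{\Pi}{A|L_{\op}}$ is not well defined, and even granting it, constancy of that coloring on $\binom{\beta}{A|L_{\op}}$ only says that each decoration class is monochromatic in its own color; it does not produce a single copy of $B$ all of whose $A$-subcopies receive the \emph{same} color, which is what $C\to(B)^A_k$ requires.

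The paper avoids all of this by going in the opposite direction: it takes the non-partite ordered hypergraph Ramsey theorem (Fact \ref{fac: hypergraphs are Ramsey}) as the global input rather than a local tool. Given $A,B\in K$, one forgets the partition, obtains $C_0\to(B|\{R,<\})^{A|\{R,<\}}_k$, and then ``spreads'' $C_0$ into a partite hypergraph $\tilde C_0$ by taking one copy $\{w^i_0<\ldots<w^i_{m-1}\}$ of the vertex set per part and declaring $(w^0_i,\ldots,w^{n-1}_j)$ a hyperedge exactly when the underlying increasing tuple of $C_0$ is one. Copies of $A$ and $B$ in $\tilde C_0$ then correspond to copies of their underlying ordered hypergraphs in $C_0$, and the coloring transfers directly. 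If you want to salvage your approach you would need to either carry out the full partite construction (projections, transversal copies, the partite lemma via Hales--Jewett) or abandon it in favor of this one-step reduction.
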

\begin{proof}
Since the general case is similar, we assume $n=2$ for simplicity.
Fix $A, B\in K$ and $k\in \omega$.
Let $A_0=A|\{R,<\}$ and $B_0=B|\{R,<\}$ respectively.
Then there is an ordered graph $C_0$ such that $C_0\to(B_0)^{A_0}_k$.
For a given ordered graph $X_0=\{v_0<\ldots <v_{m-1}\}$,
let $\tilde X_0$ be an ordered bipartite graph such that
\begin{itemize}
\item
$P_i(\tilde X_0)=\{w^i_0<\ldots <w^i_{m-1}\}$,
\item
$R(\tilde X_0)\ni (w^0_i,w^1_j)$ if and only if $i<j$ and $R(v_i, v_j)$ in $X_0$.
\end{itemize}
\begin{claim*}
$\tilde C_0\to(B)^A_k$.
\end{claim*}
Suppose that $X$ is a bipartite graph with $P_0(X)=\{v_0<\ldots <v_{l-1}\}$ and $P_1(X)=\{v_l<\ldots <v_{m-1}\}$.
For the ordered graph $X_0=X|\{R,<\}$, put $\bar X_0=\{w^0_0<\ldots <w^0_{l-1}\}\cup \{w^1_l<\ldots <w^1_{m-1}\}\subset\tilde X_0$.
(So $\bar X_0$ is a bipartite subgraph of $\tilde X_0$.)
One can easily check that $\bar X_0 \cong X$.
With this fact in mind, let $c:\binom{\tilde C_0}{A}\to k$ be any coloring.
Then there is an induced coloring $\tilde c:\binom{C_0}{A_0}\to k$ such that
$\tilde c(A_0')= c(\bar A_0')$ for all $A_0'\in\binom{C_0}{A_0}$.
Let $B_0'\in\binom{C_0}{A_0}$ be such that $\tilde c$ is constant on $\binom{B_0'}{A_0}$.
Then $\bar B_0'\in \binom{\tilde C_0}{B}$ satisfies the required condition. 
\end{proof}

\bibliographystyle{alpha}
\bibliography{IP_n}

\end{document}